\newtheorem{theorem}{Theorem}[section]
\numberwithin{theorem}{section}
\newtheorem{prop}[theorem]{Proposition}
\newtheorem{lemma}[theorem]{Lemma}
\theoremstyle{definition}
\newtheorem{definition}[theorem]{Definition}
\newtheorem{rem}[theorem]{Remark}
\newtheorem{example}[theorem]{Example}
\numberwithin{equation}{section}
\newcommand{\N}{\mathbb{N}}
\newcommand{\Z}{\mathbb{Z}}
\newcommand{\R}{\mathbb{R}}
\newcommand{\C}{\mathbb{C}}
\newcommand{\K}{\mathbb{K}}
\newcommand\e{\mathrm{e}}
\newcommand\I{\mathrm{i}}
\newcommand\re{\operatorname{Re}}
\newcommand{\rd}{\mathrm{d}}
\newcommand\dom{\mathcal D}
\newcommand\ran{\mathcal R}
\newcommand\lbar\overline
\newcommand\eps\varepsilon
\renewcommand\epsilon\varepsilon
\renewcommand\rho\varrho
\newcommand\al\alpha
\newcommand\lm\lambda
\newcommand\ds\displaystyle
\newcommand\p\partial
\newcommand{\To}{\Longrightarrow}
\newcommand{\tolong}{\longrightarrow}
\newcommand{\gnr}{\stackrel{gnr}{\rightarrow}}
\newcommand{\gsr}{\stackrel{gsr}{\rightarrow}}
\newcommand{\s}{\stackrel{s}{\rightarrow}}
\newcommand{\slong}{\stackrel{s}{\longrightarrow}}
\newcommand{\supp}{\operatorname{supp}}
\newcommand{\beq}{\begin{equation}}
\newcommand{\eeq}{\end{equation}}
\newcommand{\bmat}{\begin{pmatrix}}
\newcommand{\emat}{\end{pmatrix}}
\author{Sabine B\"ogli}
\address[S.\ B\"ogli]{
Department of Mathematics,
Imperial College London,
Huxley Building,
180 Queen's Gate, London SW7 2AZ, UK}
\email{s.boegli@imperial.ac.uk}
\thanks{
}
\title[Local convergence of spectra and pseudospectra]{Local convergence of spectra and pseudospectra}
\begin{document}

\subjclass[2010]{47A10, 47A55, 47A58}

\keywords{eigenvalue approximation, spectral exactness, spectral inclusion, spectral pollution, resolvent convergence, pseudospectra}

\date{\today}

\begin{abstract}
We prove local convergence results for the spectra and pseudospectra of sequences of  linear operators acting in different Hilbert spaces and converging in generalised strong resolvent sense to an operator with possibly non-empty essential spectrum.
We establish local spectral exactness outside the limiting essential spectrum, local $\varepsilon$-pseudospectral exactness outside the limiting essential $\varepsilon$-near spectrum, and discuss properties of these two notions including perturbation results.
\end{abstract}

\maketitle

\section{Introduction}
We address the problem of convergence of spectra and pseudospectra for a sequence $(T_n)_{n\in\N}$ of closed linear operators approximating an operator $T$. We establish regions $K\subset\C$ of local convergence,
\begin{align}
\lim_{n\to\infty}\sigma(T_n)\cap K&=\sigma(T)\cap K, \label{eq.conv.of.spec}\\
 \lim_{n\to\infty}\overline{\sigma_{\eps}(T_n)}\cap K&=\overline{\sigma_{\eps}(T)}\cap K, \quad \eps>0, \label{eq.conv.of.pseudospec}
\end{align}
where the limits are defined appropriately. Recall that for $\eps>0$ the $\eps$-pseudospectrum is defined as the open set
$$\sigma_{\eps}(T):=\Big\{\lm\in\C:\,\|(T-\lm)^{-1}\|>\frac{1}{\eps}\Big\},$$
employing the convention that $\|(T-\lm)^{-1}\|=\infty$ for $\lm\in\sigma(T)$ (see~\cite{Trefethen-2005} for an overview). 

We allow the operators $T$, $T_n$, $n\in\N$, to act in different Hilbert spaces $H$, $H_n$, $n\in\N$, and require only convergence in so-called generalised strong resolvent sense, i.e.\ the sequence of projected resolvents $((T_n-\lm)^{-1}P_{H_n})_{n\in\N}$ shall converge strongly to $(T-\lm)^{-1}P_H$ in a common larger Hilbert space.

The novelty of this paper lies in its general framework which is applicable to a wide range of operators $T$ and approximating sequences $(T_n)_{n\in\N}$:
1) We do not assume selfadjointness as in~\cite[Section~VIII.7]{reedsimon1},~\cite[Section~9.3]{weid1}, or boundedness of the operators as in~\cite{stummel2,vainikko,fillmore,ChandlerWilde-Lindner} (see also~\cite{chatelin} for an overview).
2) The operators may have non-empty essential spectrum, in contrast to the global spectral exactness results for operators with compact resolvents~\cite{Boegli-chap1,Osborn-1975-29,wolff}.
3) The results are applicable, but not restricted to, the domain truncation method for differential operators~\cite{Marletta-2010,Brown-Marletta-2001,Brown-Marletta-2003,Brown-2004-24,Reddy-1993-5,Davies-2000-43} 
and to the Galerkin (finite section) method~\cite{levitin,boulton,strauss,Lewin-Sere,Boettcher-1994,Boettcher-Wolf-1997,Boettcher-Silbermann}.
4) Our assumptions are weaker than the convergence in operator norm~\cite{Harrabi-1998} or in (generalised) norm resolvent sense~\cite{Boegli-2014-80,Hansen-2011-24}.

Regarding convergence of spectra (see~\eqref{eq.conv.of.spec}), the aim is to establish \emph{local spectral exactness} of the approximation $(T_n)_{n\in\N}$ of $T$, i.e.\
\begin{enumerate}
\item[\rm(1)] \emph{local spectral inclusion}: For every $\lm\in\sigma(T)\cap K$ there exists a sequence $(\lm_n)_{n\in\N}$ of $\lm_n\in\sigma(T_n)\cap K$, $n\in\N$, with $\lm_n\to\lm$ as $n\to\infty$;
\item[\rm(2)] \emph{no spectral pollution}: If there exists a sequence $(\lm_n)_{n\in I}$ of $\lm_n\in\sigma(T_n)\cap K$, $n\in I$, with $\lm_n\to \lm$ as $n\in I$, $n\to\infty$, then $\lm\in\sigma(T)\cap K$.
\end{enumerate}
Concerning pseudospectra (see~\eqref{eq.conv.of.pseudospec}), we define \emph{local $\eps$-pseudospectral exactness, 
-inclusion, -pollution} in an analogous way by replacing all spectra by the closures of $\eps$-pseudospectra.

In general, spectral exactness is a major challenge for non-selfadjoint problems.
In the selfadjoint case, it is well known that generalised strong resolvent convergence
implies spectral inclusion, and if the resolvents converge even in norm, then spectral
exactness prevails~\cite[Section~9.3]{weid1}. 
In the non-selfadjoint case, norm
resolvent convergence excludes spectral pollution; however,
the approximation need not be spectrally inclusive~\cite[Section~IV.3]{kato}. 
Stability problems are simpler when passing from spectra to pseudospectra; in particular, they converge ($\eps$-pseudospectral exactness) under generalised norm resolvent convergence~\cite[Theorem~2.1]{Boegli-2014-80}. However, if the resolvents converge only strongly, $\eps$-pseudospectral pollution may occur.

In the two main results (Theorems~\ref{mainthmspectralexactness},~\ref{thmexactness2}) we prove local spectral exactness outside the \emph{limiting essential spectrum} $\sigma_{\rm ess}\big((T_n)_{n\in\N}\big)$, and local $\eps$-pseudospectral exactness outside the \emph{limiting essential $\eps$-near spectrum} $\Lambda_{{\rm ess},\eps}\big((T_n)_{n\in\N}\big)$.
The notion of limiting essential spectrum was introduced by Boulton, Boussa\"id and Lewin in~\cite{boulton} for Galerkin approximations of selfadjoint operators. Here we generalise it to our more general framework, 
$$\sigma_{\rm ess}\big((T_n)_{n\in\N}\big):=\left\{\lm\in\C:\,
\begin{array}{l} \exists\,I\subset\N\,\exists\,x_n\in\dom(T_n),\,n\in I,\,\text{with}\\
\|x_n\|=1,\,x_n\stackrel{w}{\to}0,\,\|(T_n-\lm)x_n\|\to 0
\end{array}\right\},$$
and further to pseudospectral theory,
$$\Lambda_{{\rm ess},\eps}\big((T_n)_{n\in\N}\big):=\left\{\lm\in\C:\,
\begin{array}{l} \exists\,I\subset\N\,\exists\,x_n\in\dom(T_n),\,n\in I,\,\text{with}\\
\|x_n\|=1,\,x_n\stackrel{w}{\to}0,\,\|(T_n-\lm)x_n\|\to \eps
\end{array}\right\}.$$
Outside these problematic parts, we prove convergence of the ($\eps$-pseudo-) spectra with respect to the Hausdorff metric.
In the case of pseudospectra, the problematic part is the whole complex plane if $T$ has constant resolvent norm on an open set (see~Theorem~\ref{thmconstresnorm} and also~\cite{Boegli-2014-80}).

The paper is organised as follows. In Section~\ref{sectionspectra} we study convergence of spectra.
First we prove local spectral exactness outside the limiting essential spectrum (Theorem~\ref{mainthmspectralexactness}). Then we  establish properties of $\sigma_{\rm ess}\big((T_n)_{n\in\N}$, including a spectral mapping theorem (Theorem~\ref{mappingthm}) which implies a perturbation result for $\sigma_{\rm ess}\big((T_n)_{n\in\N}\big)$ (Theorem~\ref{thmreldisccomppert}).
In Section~\ref{sectionpseudospectra} we address pseudospectra and prove local $\eps$-pseudospectral convergence  (Theorem~\ref{thmexactness2}). 
Then we establish properties of the limiting essential $\eps$-near spectrum $\Lambda_{{\rm ess}, \eps}\big((T_n)_{n\in\N}\big)$ including a perturbation result (Theorem~\ref{prop.pert.pseudo}).
In the final Section~\ref{sectionapplications}, applications to the Galerkin method of block-diagonally dominant matrices and to the domain truncation method of perturbed constant-coefficient PDEs are studied.

Throughout this paper we denote by $H_0$ a separable infinite-dimensional Hilbert space. 
The notations $\|\cdot\|$ and $\langle\cdot,\cdot\rangle$ refer to the norm and scalar product of $H_0$.
Strong and weak convergence of elements in $H_0$ is denoted by $x_n\to x$ and $x_n\stackrel{w}{\to}0$, respectively.
The space $L(H)$ denotes the space of all bounded operators acting in a Hilbert space $H$.
Norm and strong operator convergence in $L(H)$ is denoted by $B_n\to B$ and $B_n\s B$, respectively.
An identity operator is denoted by $I$; scalar multiples $\lm I$ are written as $\lm$.
Let $H, H_n\subset H_0$, $n\in\N,$ be closed subspaces and $P=P_H:H_0\to H$, $P_n=P_{H_n}:H_0\to H_n, \,n\in\N,$ be the orthogonal projections onto the respective subspaces  and suppose that they converge strongly, $P_n\s P$.
Throughout, let $T$ and $T_n$, $n\in\N$, be closed, densely defined linear operators acting in the spaces $H$, $H_n$, $n\in\N$, respectively. 
The domain, spectrum, point spectrum, approximate point spectrum and resolvent set 
of $T$ are denoted by $\dom(T)$, $\sigma(T)$, $\sigma_p(T)$, $\sigma_{\rm app}(T)$ and $\rho(T)$, 
respectively, and the Hilbert space adjoint operator of $T$ is~$T^*$.
For non-selfadjoint operators there exist (at least) five different definitions for the essential spectrum which all coincide in the selfadjoint case; for a discussion see~\cite[Chapter~IX]{edmundsevans}.
Here we use
$$\sigma_{\rm ess}(T):=\left\{\lm\in\C:\,\exists\, (x_n)_{n\in\N}\subset\dom(T) \text{ with }\|x_n\|=1,\,x_n\stackrel{w}{\to}0,\,\|(T-\lm)x_n\|\to 0\right\},$$
which corresponds to $k=2$ in~\cite{edmundsevans}. 
The remaining spectrum $\sigma_{\rm dis}(T):=\sigma(T)\backslash\sigma_{\rm ess}(T)$ is called the discrete spectrum.
For a subset $\Omega\subset\C$ we denote $\Omega^*:=\{\overline{z}:\,z\in\Omega\}$.
Finally, for two compact subsets $\Omega,\Sigma\subset\C$, their Hausdorff distance is ${\rm d_H}(\Omega,\Sigma):=\max\big\{\sup_{z\in\Omega}{\rm dist}(z,\Sigma),\sup_{z\in\Sigma}{\rm dist}(z,\Omega)\big\}$
where ${\rm dist}(z,\Sigma):=\inf_{w\in \Sigma}|z-w|$.

\section{Local convergence of spectra}\label{sectionspectra}
In this section we address the problem of local spectral exactness.
In Subsection~\ref{subsectionmainresults} we introduce the limiting essential spectrum $\sigma_{\rm ess}\big((T_n)_{n\in\N}$ and state the main result (Theorem~\ref{mainthmspectralexactness}). Then we establish properties of $\sigma_{\rm ess}\big((T_n)_{n\in\N}$ in Subsection~\ref{subsectionpropertiessigmaess}, including a spectral mapping theorem (Theorem~\ref{mappingthm}) which implies a perturbation result for $\sigma_{\rm ess}\big((T_n)_{n\in\N}\big)$ (Theorem~\ref{thmreldisccomppert}). At the end of the section, in Subsection~\ref{subsectionproofofmainresults}, we prove the main result and illustrate it for the example of Galerkin approximations of perturbed Toeplitz operators.

\subsection{Main convergence result}\label{subsectionmainresults}

The following definition of {generalised} strong and norm resolvent convergence is due to Weidmann~\cite[Section~9.3]{weid1} who considers selfadjoint operators.

\begin{definition}\label{defresconv}
\begin{enumerate}
\item[\rm i)] The sequence $(T_n)_{n\in\N}$ is said to \emph{converge in generalised strong resolvent sense} to $T$, 
denoted by $T_n\gsr T$, if there exist $n_0\in\N$ and $\lm_0\in\underset{n\geq n_0}{\bigcap}\rho(T_n)\cap\rho(T)$ with\vspace{-2mm}
$$(T_n-\lm_0)^{-1}P_n\slong (T-\lm_0)^{-1}P, \quad n\to\infty.$$
\item[\rm ii)] The sequence $(T_n)_{n\in\N}$ is said to \emph{converge in generalised norm resolvent sense} to $T$, 
denoted by $T_n\gnr T$, if there exist $n_0\in\N$ and $\lm_0\in\underset{n\geq n_0}{\bigcap}\rho(T_n)\cap\rho(T)$ with\vspace{-2mm}
$$(T_n-\lm_0)^{-1}P_n\tolong (T-\lm_0)^{-1}P, \quad n\to\infty.$$
\end{enumerate}
\end{definition}

The following definition generalises a notion introduced in~\cite{boulton} for the Galerkin method of selfadjoint operators.
\begin{definition}\label{defsigmae}
The \emph{limiting essential spectrum} of $(T_n)_{n\in\N}$ is defined as  
\begin{align*}
\sigma_{\rm ess}\big((T_n)_{n\in\N}\big):=\left\{\lm\in\C:\,
\begin{array}{l} \exists\,I\subset\N\,\exists\,x_n\in\dom(T_n),\,n\in I,\,\text{with}\\
\|x_n\|=1,\,x_n\stackrel{w}{\to}0,\,\|(T_n-\lm)x_n\|\to 0
\end{array}\right\}.
\end{align*}
\end{definition}

The following theorem is the main result of this section. We characterise regions where approximating sequences $(T_n)_{n\in\N}$ are locally spectrally exact and establish spectral convergence with respect to the Hausdorff metric.
\begin{theorem}\label{mainthmspectralexactness}
 \begin{enumerate}[label=\rm{\roman{*})}] 
\item Assume that $T_n\gsr T$ and $T_n^*\gsr T^*$. 
Then spectral pollution is confined to the set \beq \sigma_{\rm ess}\left((T_n)_{n\in\N}\right)\cup\sigma_{\rm ess}\left((T_n^*)_{n\in\N}\right)^*,\label{eqbadset}\eeq
and for every isolated $\lm\in\sigma(T)$ that does not belong to the set in~\eqref{eqbadset},   there exists a sequence of $\lm_n\in\sigma(T_n), \,n\in\N,$ with $\lm_n\to\lm, \,n\to\infty$.

\item Assume that $T_n\gsr T$ and $T_n,\,n\in\N,$ all have compact resolvents. 
Then claim{\rm~i)} holds with~\eqref{eqbadset} replaced by the possibly smaller set 
\beq\label{eqbadset2}
\sigma_{\rm ess}\left((T_n^*)_{n\in\N}\right)^*.
\eeq

\item Suppose that the assumptions of {\rm i)} or {\rm ii)} hold, and let $K\subset\C$ be a compact subset such that $K\cap\sigma(T)$ is discrete and belongs to the interior of $K$.
If the intersection of $K$ with the set in~\eqref{eqbadset} or~\eqref{eqbadset2}, respectively, is contained in $\sigma(T)$,
then
$${\rm d_H}\big(\sigma(T_n)\cap K,\sigma(T)\cap K\big)\tolong 0, \quad n\to\infty.$$
\end{enumerate}
\end{theorem}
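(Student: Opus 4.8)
The plan is to isolate two facts that hold outside the set $\mathfrak B$ in~\eqref{eqbadset} (resp.~\eqref{eqbadset2}), which one first checks is closed by a diagonal argument. The first, an \emph{identification lemma}, reads: if $T_n\gsr T$ and $T_n^*\gsr T^*$, and $x_n\in\dom(T_n)$ with $x_n\stackrel{w}{\to}x$ and $(T_n-z)x_n\to w$, then $x\in\dom(T)$ and $(T-z)x=w$. I would prove it by writing $x_n=(T_n-\lm_0)^{-1}P_nv_n$ with $v_n:=(T_n-\lm_0)x_n\stackrel{w}{\to}w+(z-\lm_0)x$ (for $\lm_0$ as in Definition~\ref{defresconv}) and passing to the limit in $\langle x_n,\phi\rangle=\langle v_n,((T_n-\lm_0)^{-1}P_n)^*\phi\rangle$; the point is that $((T_n-\lm_0)^{-1}P_n)^*\phi=(T_n^*-\overline{\lm_0})^{-1}P_n\phi\to(T^*-\overline{\lm_0})^{-1}P\phi$ strongly, which is exactly the hypothesis $T_n^*\gsr T^*$, so the limit exists and identifies $x=(T-\lm_0)^{-1}P(w+(z-\lm_0)x)$. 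The second fact follows from the first by a weak-compactness contradiction: for $z\in\rho(T)\setminus\mathfrak B$, using $z\notin\sigma_{\rm ess}((T_n)_{n\in\N})$ one gets that $T_n-z$ is eventually bounded below (uniformly for $z$ in compact subsets of $\rho(T)\setminus\mathfrak B$), and using $z\notin\sigma_{\rm ess}((T_n^*)_{n\in\N})^*$ the same for $T_n^*-\overline z$, so $\ran(T_n-z)$ is also dense; hence $z\in\rho(T_n)$ for $n$ large with $\sup_n\|(T_n-z)^{-1}P_n\|$ locally bounded on $\rho(T)\setminus\mathfrak B$, and (again by the identification lemma, since every weak subsequential limit of $(T_n-z)^{-1}P_n\phi$ must equal $(T-z)^{-1}P\phi$) $(T_n-z)^{-1}P_n\to(T-z)^{-1}P$ in the weak operator topology there.

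Given these, part~i) is short. For no spectral pollution: if $\lm\in\rho(T)\setminus\mathfrak B$ the second fact yields a neighbourhood of $\lm$ eventually contained in $\bigcap_{n\ge n_0}\rho(T_n)$ with uniformly bounded resolvents, so no sequence $\lm_n\in\sigma(T_n)$ can tend to $\lm$. For inclusion: let $\lm\in\sigma(T)$ be isolated with $\lm\notin\mathfrak B$, and choose $r>0$ so small that the circle $\Gamma=\{|z-\lm|=r\}$ avoids $\sigma(T)\cup\mathfrak B$; on $\Gamma$ the second fact gives $\Gamma\subset\rho(T_n)$ with $\|(z-T_n)^{-1}P_n\|$ bounded uniformly in $n$ and weak convergence of the resolvents, so $\frac1{2\pi\I}\oint_\Gamma(z-T_n)^{-1}P_n\,\d z\to\frac1{2\pi\I}\oint_\Gamma(z-T)^{-1}P\,\d z$ weakly, the right-hand side being the Riesz projection of $T$ at $\lm$ (composed with $P$), which is nonzero because $\lm\in\sigma(T)$. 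If $D(\lm,r)\cap\sigma(T_n)=\emptyset$ along an infinite index set, the left-hand side would vanish there, contradicting nonvanishing of the weak limit; hence $D(\lm,r)\cap\sigma(T_n)\ne\emptyset$ for all large $n$, and letting $r\to0$ produces $\lm_n\in\sigma(T_n)$ with $\lm_n\to\lm$.

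For part~ii), the extra input is that $\sigma(T_n)=\sigma_p(T_n)$ and $T_n-z$ is Fredholm of index zero, so membership of $z$ in $\rho(T_n)$ together with the bound on $(T_n-z)^{-1}$ is equivalent to injectivity of $T_n^*-\overline z$ with the bound on $(T_n^*-\overline z)^{-1}$. I would therefore run the whole argument on the adjoint side, where the only strong operator convergence needed is that $((T_n^*-\overline{\lm_0})^{-1}P_n)^*=(T_n-\lm_0)^{-1}P_n$ converges strongly, which is just $T_n\gsr T$; thus $T_n^*\gsr T^*$ is not required, and since the $T_n$-side never appears, only $\sigma_{\rm ess}((T_n^*)_{n\in\N})^*$ has to be excluded.

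Finally part~iii). Since $K\cap\sigma(T)$ is discrete and $K$ compact, it is finite, and since it lies in $\operatorname{int}(K)$ each of its points is isolated in $\sigma(T)$. The estimate $\sup_{\mu\in\sigma(T_n)\cap K}\dist(\mu,\sigma(T)\cap K)\to0$ follows from no pollution: any accumulation point $\mu\in K$ of a sequence $\mu_n\in\sigma(T_n)\cap K$ lies in $\sigma(T)\cup\mathfrak B$ by parts~i)/ii), hence in $\sigma(T)\cap K$ because $K\cap\mathfrak B\subseteq\sigma(T)$, so no such sequence can stay a fixed distance from $\sigma(T)\cap K$. For the reverse estimate, fix $\lm\in\sigma(T)\cap K$: the hypotheses ($K\cap\mathfrak B\subseteq\sigma(T)$, $K\cap\sigma(T)$ discrete, $\lm\in\operatorname{int}(K)$) force a punctured disc around $\lm$ to avoid $\sigma(T)\cup\mathfrak B$ \emph{even when} $\lm\in\mathfrak B$, so the contour argument of part~i)/ii) goes through verbatim and yields $\lm_n\in\sigma(T_n)$ with $\lm_n\to\lm$ and $\lm_n\in K$ eventually; combined with the observation that $\sigma(T_n)\cap K$ is eventually empty whenever $\sigma(T)\cap K$ is, this gives ${\rm d_H}(\sigma(T_n)\cap K,\sigma(T)\cap K)\to0$. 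I expect the identification lemma to be the crux: strong operator convergence is not preserved under evaluation at weakly convergent vectors, so one must exploit the factorisation through $(T_n-\lm_0)^{-1}P_n$ and the strong convergence of its adjoint — which is exactly why i) needs both resolvent hypotheses and ii) needs the Fredholm reduction; the remaining care lies in making the lower bounds uniform in $z$ and in the closedness of $\mathfrak B$.
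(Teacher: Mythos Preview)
Your proposal is correct and essentially self-contained, whereas the paper takes a more modular route: it introduces the region of boundedness $\Delta_b\big((T_n)_{n\in\N}\big)$, proves the identity $\Delta_b\cap\rho(T)=(\C\setminus\fB)\cap\rho(T)$ (Proposition~\ref{thmregionofbddvsseqessspectrum}), and then invokes a companion result \cite[Theorem~2.3]{Boegli-chap1} (Theorem~\ref{thmlocal} here) as a black box for ``no pollution in $\Delta_b$'' and ``inclusion when a punctured disc lies in $\Delta_b\cap\rho(T)$''. Your identification lemma is a clean repackaging of what the paper proves inside Proposition~\ref{lemmasigmaappofTn}~ii) (there only the case $w=0$ is needed), and your derivation of eventual uniform resolvent bounds on $\rho(T)\setminus\fB$ is an alternative proof of the inclusion $\rho(T)\setminus\fB\subset\Delta_b$ that the paper extracts from Lemma~\ref{lemmasigmaappofTn1} and Proposition~\ref{lemmasigmaappofTn}. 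Where you genuinely diverge is the inclusion step: instead of citing Theorem~\ref{thmlocal}, you run a Riesz--projection/contour argument with weak operator convergence of resolvents. This is perfectly valid (and is presumably close to what underlies the cited theorem), and it has the advantage of being self-contained; the paper's approach has the advantage of cleanly separating the abstract spectral-exactness machinery from the identification of $\Delta_b$ with $\C\setminus\fB$. Your treatment of ii) via the Fredholm/index-zero reduction and the observation that weak operator convergence passes to adjoints is also correct and matches the paper's Proposition~\ref{thmregionofbddvsseqessspectrum}~ii) in spirit. Finally, you rightly flag that in iii) one needs the \emph{punctured-disc} version of inclusion even when $\lm\in\fB$; the paper's proof appeals to ``claim~i)'' but really uses the stronger statement of Theorem~\ref{thmlocal}~i), so your remark is a useful clarification rather than a correction. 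One minor point: on $\rho(T)\cap\Delta_b$ the resolvents in fact converge \emph{strongly} (via the resolvent identity from the single $\lm_0$), which would give strong rather than merely weak convergence of the Riesz projections --- but weak convergence is already enough for your contradiction.
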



\subsection{Properties of the limiting essential spectrum}\label{subsectionpropertiessigmaess}

In this subsection we establish properties that the limiting essential spectrum shares with the essential spectrum (see \cite[Sections~IX.1,2]{edmundsevans}).

The following result follows from a standard diagonal sequence argument; we omit the proof.

\begin{prop}\label{propsigmaeclosed}
The limiting essential spectrum $\sigma_{\rm ess}\left((T_n)_{n\in\N}\right)$ is a closed subset of $\C$.
\end{prop}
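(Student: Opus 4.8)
The plan is to show closedness by a diagonal-sequence argument: take a point $\lm$ in the closure of $\sigma_{\rm ess}\big((T_n)_{n\in\N}\big)$ and produce the index set and approximating vectors witnessing $\lm\in\sigma_{\rm ess}\big((T_n)_{n\in\N}\big)$ itself. First I would fix a sequence $\mu_k\to\lm$ with each $\mu_k\in\sigma_{\rm ess}\big((T_n)_{n\in\N}\big)$. For each $k$, by definition there is an infinite set $I_k\subset\N$ and unit vectors $x_n^{(k)}\in\dom(T_n)$ for $n\in I_k$ with $x_n^{(k)}\stackrel{w}{\to}0$ and $\|(T_n-\mu_k)x_n^{(k)}\|\to 0$ as $n\to\infty$ within $I_k$.

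Next I would extract, for each $k$, a single index $n_k\in I_k$ such that simultaneously $\|(T_{n_k}-\mu_k)x_{n_k}^{(k)}\|<1/k$ and the weak-null behaviour has "kicked in" enough; concretely, fixing a countable dense set $\{e_j\}_{j\in\N}$ in $H_0$, I would require $|\langle x_{n_k}^{(k)},e_j\rangle|<1/k$ for $j=1,\dots,k$, which is possible since $x_n^{(k)}\stackrel{w}{\to}0$ forces $\langle x_n^{(k)},e_j\rangle\to 0$ for each fixed $j$. I would also arrange $n_1<n_2<\cdots$ so that $I:=\{n_k:k\in\N\}$ is an infinite subset of $\N$. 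Then along $I$, setting $y_{n_k}:=x_{n_k}^{(k)}$, we have $\|y_{n_k}\|=1$, and for every $j$ the inequality $|\langle y_{n_k},e_j\rangle|<1/k$ for $k\geq j$ gives $\langle y_{n_k},e_j\rangle\to 0$; boundedness of $(y_{n_k})$ together with density of $\{e_j\}$ upgrades this to $y_{n_k}\stackrel{w}{\to}0$. Finally, the triangle inequality
\[
\|(T_{n_k}-\lm)y_{n_k}\|\leq\|(T_{n_k}-\mu_k)y_{n_k}\|+|\mu_k-\lm|\,\|y_{n_k}\|<\frac1k+|\mu_k-\lm|\longrightarrow 0,
\]
so $\lm$ satisfies the defining condition with index set $I$ and vectors $(y_{n_k})$, hence $\lm\in\sigma_{\rm ess}\big((T_n)_{n\in\N}\big)$.

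The only mild subtlety, which I would flag as the main point requiring care rather than a genuine obstacle, is the bookkeeping needed to turn countably many weak-null conditions (one per $k$, across varying index sets $I_k$) into a single weak-null subsequence indexed by $I$: this is exactly why one tests against a fixed countable dense set and uses uniform boundedness to pass from weak convergence on a dense set to weak convergence, and why the indices $n_k$ must be chosen strictly increasing so that $I$ is infinite and the tail estimates accumulate correctly. Everything else is a routine $\varepsilon/k$ diagonal argument, which is why the authors state the result without proof.
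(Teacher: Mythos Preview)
Your proposal is correct and is precisely the ``standard diagonal sequence argument'' the paper invokes but omits; the use of a countable dense set in the separable space $H_0$ to control the weak-null condition along the diagonal is the natural way to execute it, and all steps are sound.
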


The limiting essential spectrum satisfies a mapping theorem.

\begin{theorem}\label{mappingthm}
 Let $\lm_0\in\underset{n\in\N}{\bigcap}\rho(T_n)$ and $\lm\neq \lm_0$. Then the following are equivalent:
\begin{enumerate}
 \item[\rm(1)] $\lm\in\sigma_{\rm ess}\left((T_n)_{n\in\N}\right)$;
 \item[\rm(2)] $(\lm-\lm_0)^{-1}\in\sigma_{\rm ess}\big(\big((T_n-\lm_0)^{-1}\big)_{n\in\N}\big)$.
\end{enumerate}
\end{theorem}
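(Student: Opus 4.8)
The plan is to mirror the classical mapping theorem for the essential spectrum of a single closed operator (see~\cite[Section~IX.2]{edmundsevans}), carried out simultaneously in all spaces $H_n$. The algebraic core is the identity obtained by the substitution $y:=(T_n-\lm_0)x$: for $x\in\dom(T_n)$, writing $\mu:=(\lm-\lm_0)^{-1}$ and using $x=(T_n-\lm_0)^{-1}y$ (valid since $\lm_0\in\rho(T_n)$), one has
\begin{equation*}
(T_n-\lm)x=y-(\lm-\lm_0)(T_n-\lm_0)^{-1}y=-(\lm-\lm_0)\big[(T_n-\lm_0)^{-1}-\mu\big]y,
\end{equation*}
so that $\|(T_n-\lm)x\|=|\lm-\lm_0|\,\big\|\big[(T_n-\lm_0)^{-1}-\mu\big]y\big\|$. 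This converts singular sequences for $T_n$ at $\lm$ into singular sequences for $(T_n-\lm_0)^{-1}$ at $\mu$, and conversely, once the norms and the weak-null property are controlled.

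For (1)$\Rightarrow$(2): take an infinite $I\subset\N$ and $x_n\in\dom(T_n)$, $n\in I$, with $\|x_n\|=1$, $x_n\stackrel{w}{\to}0$, $\|(T_n-\lm)x_n\|\to0$, and put $y_n:=(T_n-\lm_0)x_n\in H_n$. The identity gives $\big\|\big[(T_n-\lm_0)^{-1}-\mu\big]y_n\big\|\to0$; since $(T_n-\lm_0)^{-1}y_n=x_n$, this reads $x_n=\mu y_n+o(1)$, hence $y_n=\mu^{-1}x_n+o(1)=(\lm-\lm_0)x_n+o(1)$. Therefore $\|y_n\|\to|\lm-\lm_0|\in(0,\infty)$ and $y_n\stackrel{w}{\to}0$ (weak convergence of $(\lm-\lm_0)x_n$ plus a strong null remainder). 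Then $z_n:=y_n/\|y_n\|\in H_n=\dom\big((T_n-\lm_0)^{-1}\big)$, defined for $n\in I$ large, satisfies $\|z_n\|=1$, $z_n\stackrel{w}{\to}0$, $\big\|\big[(T_n-\lm_0)^{-1}-\mu\big]z_n\big\|\to0$, so $\mu\in\sigma_{\rm ess}\big(\big((T_n-\lm_0)^{-1}\big)_{n\in\N}\big)$.

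For (2)$\Rightarrow$(1): take an infinite $I\subset\N$ and $z_n\in H_n$ with $\|z_n\|=1$, $z_n\stackrel{w}{\to}0$, $\big\|\big[(T_n-\lm_0)^{-1}-\mu\big]z_n\big\|\to0$, and put $x_n:=(T_n-\lm_0)^{-1}z_n\in\dom(T_n)$, so $(T_n-\lm_0)x_n=z_n$. The approximate eigenvector relation now gives $x_n=\mu z_n+o(1)$, hence $\|x_n\|\to|\mu|\in(0,\infty)$ and $x_n\stackrel{w}{\to}0$. Applying the identity with $y=z_n$ yields $\|(T_n-\lm)x_n\|=|\lm-\lm_0|\,\big\|\big[(T_n-\lm_0)^{-1}-\mu\big]z_n\big\|\to0$. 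Normalising, $\widetilde x_n:=x_n/\|x_n\|$ is a singular sequence for $T_n$ at $\lm$, so $\lm\in\sigma_{\rm ess}\big((T_n)_{n\in\N}\big)$.

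The only delicate point — which I would flag as the main (though modest) obstacle — is the bookkeeping in the normalisation: a priori neither $\|(T_n-\lm_0)x_n\|$ nor $\|(T_n-\lm_0)^{-1}z_n\|$ need stay bounded away from $0$ and $\infty$ along $I$, and dividing by them could in principle spoil weak convergence to $0$. Both difficulties are resolved at once by the observation that the approximate eigenvector equation forces $y_n$ and $x_n$ to be asymptotically scalar multiples of each other, which simultaneously pins down the limiting norm and preserves the weak-null property.
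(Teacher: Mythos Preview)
Your proof is correct and follows essentially the same route as the paper: both directions use the substitution $y_n=(T_n-\lm_0)x_n$ (respectively $x_n=(T_n-\lm_0)^{-1}z_n$), the identity $(T_n-\lm)x_n=-(\lm-\lm_0)\big[(T_n-\lm_0)^{-1}-(\lm-\lm_0)^{-1}\big]y_n$, and the observation that $y_n=(\lm-\lm_0)x_n+o(1)$ which simultaneously yields the nonzero limiting norm and preserves weak convergence to~$0$. The paper presents the same argument, normalising $y_n$ at the outset and then verifying the required properties directly, but the content is identical.
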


\begin{proof}
``(1)$\,\To\,$(2)'':
Let $\lm\in\sigma_{\rm ess}\left((T_n)_{n\in\N}\right)$. 
By Definition~\ref{defsigmae} of the limiting essential spectrum, there exist $I\subset\N$ and $x_n\in\dom(T_n), \,n\in I,$ such that $\|x_n\|=1$,  $x_n\stackrel{w}{\to} 0$ and $\|(T_n-\lm)x_n\|\to 0$.
Note that $\|(T_n-\lm_0)x_n\|\to |\lm-\lm_0|\neq 0$, hence there exists $N\in\N$ such that $\|(T_n-\lm_0)x_n\|>0$ for every $n\in I$ with $n\geq N$.
Define $$y_n:=\frac{(T_n-\lm_0)x_n}{\|(T_n-\lm_0)x_n\|}, \quad n\in I, \quad n\geq N.$$
Then $\|y_n\|=1$ and $$y_n=\frac{(T_n-\lm)x_n}{\|(T_n-\lm_0)x_n\|}+\frac{(\lm-\lm_0)x_n}{\|(T_n-\lm_0)x_n\|}\stackrel{w}{\tolong}0, \quad n\to\infty.$$
Moreover, we calculate
\begin{align*}
 \left\|\left((T_n-\lm_0)^{-1}-(\lm-\lm_0)^{-1}\right)y_n\right\|&=\left\|\frac{x_n-(\lm-\lm_0)^{-1}(T_n-\lm_0)x_n}{\|(T_n-\lm_0)x_n\|}\right\|\\
&=|\lm-\lm_0|^{-1}\,\frac{\|(T_n-\lm)x_n\|}{\|(T_n-\lm_0)x_n\|}\tolong 0, \quad n\to\infty.
\end{align*}
This implies $(\lm-\lm_0)^{-1}\in\sigma_{\rm ess}\big(\big((T_n-\lm_0)^{-1}\big)_{n\in\N}\big)$.

``(2)$\,\To\,$(1)'':
It is easy to check that if there exist an infinite subset $I\subset\N$ and $y_n\in H_n$, $n\in I$, with $\|y_n\|=1$, $y_n\stackrel{w}{\to}0$ and $ \left\|\left((T_n-\lm_0)^{-1}-(\lm-\lm_0)^{-1}\right)y_n\right\|\to 0$,
then $$x_n:=\frac{(T_n-\lm_0)^{-1}y_n}{\|(T_n-\lm_0)^{-1}y_n\|}, \quad n\in I, $$
satisfy  $\|x_n\|=1$, $x_n\stackrel{w}{\to} 0$ and $\|(T_n-\lm)x_n\|\to 0$.
\end{proof}

\begin{rem}
For the Galerkin method, Theorem~\ref{mappingthm} is different from the spectral mapping theorem ~\cite[Theorem 7]{boulton} for semi-bounded selfadjoint operators.
Whereas in Theorem~\ref{mappingthm} the resolvent of the approximation, i.e.\ $(P_nT|_{\ran(P_n)}-\lm_0)^{-1}$, is considered, 
the result in~\cite{boulton} is formulated in terms of the approximation of the resolvent, i.e.\  $P_n(T-\lm_0)^{-1}|_{\ran(P_n)}$, which is in general not easy to compute.
\end{rem}

The essential spectrum is contained in its limiting counterpart.

\begin{prop}\label{propsigmaess}
\begin{enumerate}
\item[\rm i)] Assume that $T_n\gsr T$. Then
$\sigma_{\rm ess}(T)\subset\sigma_{\rm ess}\big((T_n)_{n\in\N}\big).$
\item[\rm ii)] If $T_n\gnr T$, then $\sigma_{\rm ess}(T)=\sigma_{\rm ess}\left((T_n)_{n\in\N}\right)$.
\end{enumerate}
\end{prop}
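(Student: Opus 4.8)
\textbf{Proof proposal for Proposition~\ref{propsigmaess}.}

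The plan is to reduce both statements to a resolvent-level assertion and then invoke the mapping theorem (Theorem~\ref{mappingthm}) together with standard facts about strong/norm operator convergence. For part~i), I would fix $\lm\in\sigma_{\rm ess}(T)$ and choose a Weyl sequence $(x_k)_{k\in\N}\subset\dom(T)$ with $\|x_k\|=1$, $x_k\stackrel{w}{\to}0$, $\|(T-\lm)x_k\|\to0$; without loss of generality $\lm\neq\lm_0$ (the $\lm_0$ from Definition~\ref{defresconv}, and if $\lm=\lm_0$ one first shifts, or simply notes $\lm_0\in\rho(T)$ so this case is vacuous). Passing to the resolvent, $\lbar\mu:=(\lm-\lm_0)^{-1}\in\sigma_{\rm ess}\big((T-\lm_0)^{-1}\big)$ via the same computation as in the proof of Theorem~\ref{mappingthm}. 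Now set $R_n:=(T_n-\lm_0)^{-1}P_n$ and $R:=(T-\lm_0)^{-1}P$, so that $R_n\s R$ by hypothesis $T_n\gsr T$. The task is then to produce, from a Weyl sequence for $R$, a "limiting" Weyl sequence for $(R_n)_{n\in\N}$; this is essentially the strong-resolvent version of spectral inclusion for the bounded operators $R_n$, and the conclusion $\lbar\mu\in\sigma_{\rm ess}\big((R_n)_{n\in\N}\big)$ translates back via Theorem~\ref{mappingthm} (in the form: $\lbar\mu\in\sigma_{\rm ess}\big((R_n)_{n\in\N}\big)$ iff $\lm\in\sigma_{\rm ess}\big((T_n)_{n\in\N}\big)$, noting $0\in\rho(T_n-\lm_0)^{-1}$ is handled since $0\neq\lbar\mu$).

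The core lemma I need is: if $B_n\s B$ in $L(H_0)$ with all $B_n,B$ restricted to act via the projections $P_n,P$ as above, $\|B_n\|$ bounded, $B_nP_n=P_nB_n$, and $\mu\in\sigma_{\rm ess}(B)$ (meaning there is a normalised weakly-null Weyl sequence $(u_k)$ in $\ran(P)$ with $\|(B-\mu)u_k\|\to0$), then there exist $I\subset\N$ and normalised weakly-null $v_n\in\ran(P_n)$, $n\in I$, with $\|(B_n-\mu)v_n\|\to0$. I would prove this by a diagonal argument: for each fixed $k$, $\|(B_n-\mu)P_nu_k\|\to\|(B-\mu)u_k\|$ as $n\to\infty$ (using $B_nP_n\s BP$ strongly and $P_nu_k\to u_k$, so $B_nP_nu_k\to BPu_k=Bu_k$), and $\|P_nu_k\|\to\|u_k\|=1$; also, to preserve weak-nullity one uses that the $u_k$ themselves can be chosen (by the standard subsequence trick) with good spreading, and $P_nu_k\stackrel{w}{\to}u_k$ as $n\to\infty$. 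Then one extracts, for each $k$, an index $n_k$ with $\|P_{n_k}u_k\|$ close to $1$ and $\|(B_{n_k}-\mu)P_{n_k}u_k\|<1/k$, and after normalising $v_k:=P_{n_k}u_k/\|P_{n_k}u_k\|$, checks $v_k\stackrel{w}{\to}0$ (here a small care: weak convergence in $H_0$ of the diagonal sequence follows because for any $h\in H_0$, $\langle v_k,h\rangle$ is close to $\langle u_k,h\rangle\to0$, with error controlled by $\|P_{n_k}h-h\|\cdot\|u_k\|\to0$). This gives $\mu\in\sigma_{\rm ess}\big((B_n)_{n\in\N}\big)$.

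For part~ii): if $T_n\gnr T$ then the inclusion $\sigma_{\rm ess}(T)\subset\sigma_{\rm ess}\big((T_n)_{n\in\N}\big)$ holds by~i), so it remains to show the reverse. Here I would again pass to resolvents, where now $R_n\to R$ in operator norm. Suppose $\lm\in\sigma_{\rm ess}\big((T_n)_{n\in\N}\big)$, $\lm\neq\lm_0$, with $I\subset\N$ and $x_n\in\dom(T_n)$, $\|x_n\|=1$, $x_n\stackrel{w}{\to}0$, $\|(T_n-\lm)x_n\|\to0$. By Theorem~\ref{mappingthm}, $\lbar\mu=(\lm-\lm_0)^{-1}\in\sigma_{\rm ess}\big((R_n)_{n\in\N}\big)$, i.e.\ there are normalised weakly-null $y_n\in\ran(P_n)$, $n\in I$, with $\|(R_n-\lbar\mu)y_n\|\to0$. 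Then $\|(R-\lbar\mu)y_n\|\le\|(R-R_n)y_n\|+\|(R_n-\lbar\mu)y_n\|\le\|R-R_n\|+o(1)\to0$, but since $\|R_nP_n-RP\|\to0$ I should phrase this with $Ry_n$ versus $R_ny_n$ using $y_n\in\ran(P_n)$ and $\|R P_n - R_n P_n\|\le\|RP-R_nP_n\|+\|R(P-P_n)\|\to0$ (the last term goes to zero only strongly, not in norm!) — so a cleaner route is to write $\|(RP-\lbar\mu)y_n\|$ directly and bound it by $\|RP-R_nP_n\|\|y_n\| + \|(R_nP_n-\lbar\mu)y_n\|\to0$. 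Thus $(y_n)_{n\in I}$ is a weakly-null normalised Weyl sequence for $R=RP$, hence $\lbar\mu\in\sigma_{\rm ess}(R)=\sigma_{\rm ess}\big((T-\lm_0)^{-1}\big)$, and the resolvent mapping theorem for a single operator (the computation in the proof of Theorem~\ref{mappingthm} with all $T_n$ replaced by $T$) gives $\lm\in\sigma_{\rm ess}(T)$.

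I expect the main obstacle to be the careful bookkeeping in the diagonal-sequence construction for part~i): ensuring simultaneously that the diagonal sequence $(v_k)$ is normalised, weakly null in $H_0$, and an approximate null sequence for $B_{n_k}$, given that the double limit (in $k$ and in $n$) only converges iteratively. The weak-nullity is the delicate point — one must exploit that $P_{n_k}u_k\stackrel{w}{\to}0$ requires both $u_k\stackrel{w}{\to}0$ and $P_{n_k}\s P$ to cooperate, which works because testing against a fixed $h$ splits as $\langle u_k, P_{n_k}h\rangle$ and $P_{n_k}h\to Ph$ strongly while $u_k$ is bounded and weakly null against the fixed limit $Ph$. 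The norm-resolvent half (part~ii)) is comparatively routine once the resolvent reformulation is in place, the only subtlety being that $P_n\s P$ only strongly, so one must keep resolvents paired with their own projections ($R_nP_n$, $RP$) throughout rather than splitting off $R(P-P_n)$.
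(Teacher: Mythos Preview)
Your proposal is correct, and for part~ii) it is essentially the paper's argument: pass to resolvents via Theorem~\ref{mappingthm}, use norm convergence to transfer the limiting Weyl sequence to $(T-\lm_0)^{-1}P$, then invoke the single-operator mapping theorem. One small point you gloss over: the Weyl vectors $y_n$ lie in $H_n$, not in $H$, so what you actually obtain is $\bar\mu\in\sigma_{\rm ess}\big((T-\lm_0)^{-1}P\big)$ rather than $\sigma_{\rm ess}\big((T-\lm_0)^{-1}\big)$. The paper handles this by observing $\sigma_{\rm ess}\big((T-\lm_0)^{-1}P\big)\subset\sigma_{\rm ess}\big((T-\lm_0)^{-1}\big)\cup\{0\}$ and $\bar\mu\neq 0$; you should add this.

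For part~i) your route differs from the paper's. You first translate to the resolvent level via the mapping theorem, then run the diagonal argument for the bounded operators $R_n\s R$, then translate back. The paper instead works directly with $T_n$: it uses Lemma~\ref{AndAJndJnew} (the graph-lift $y_n:=(T_n-\lm_0)^{-1}P_n(T-\lm_0)x$) to produce, for each Weyl vector $x_k\in\dom(T)$, normalised $x_{k;n}\in\dom(T_n)$ with $x_{k;n}\to x_k$ and $T_nx_{k;n}\to Tx_k$, and then diagonalises in $k$ and $n$ exactly as you describe. The advantage of the paper's approach is that it avoids both invocations of Theorem~\ref{mappingthm} for this direction --- the resolvent appears only inside the construction of the approximants, not as a reformulation of the target statement. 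Your approach, on the other hand, isolates a reusable ``core lemma'' for bounded strongly convergent sequences which may be of independent interest, at the cost of a slightly longer chain of reductions.
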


For the proof we use the following elementary result.

\begin{lemma}\label{AndAJndJnew}
Assume that $T_n\gsr T$. Then
for all $x\in \dom(T)$ there exists a sequence of elements $x_n\in\dom(T_n), \,n\in\N,$ with $\|x_n\|=1$, $n\in\N$, and
    \beq \|x_n-x\|+\|T_nx_n-Tx\|\tolong 0, \quad n\to\infty.\label{eq.disc.conv}\eeq
\end{lemma}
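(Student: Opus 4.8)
The plan is to reduce the statement to the convergence $(T_n-\lm_0)^{-1}P_n \slong (T-\lm_0)^{-1}P$ provided by the definition of $T_n \gsr T$. Fix $x \in \dom(T)$ and set $y := (T-\lm_0)x \in H$, so that $x = (T-\lm_0)^{-1}y = (T-\lm_0)^{-1}Py$ (here $\lm_0$ is the regular point from Definition \ref{defresconv}, and I enlarge $n_0$ if necessary so that $\lm_0 \in \rho(T_n)$ for all $n$ under consideration). Define the natural candidate
\[
\widetilde{x}_n := (T_n - \lm_0)^{-1} P_n\, y \in \dom(T_n), \qquad n \geq n_0.
\]
By generalised strong resolvent convergence, $\widetilde{x}_n \to (T-\lm_0)^{-1}Py = x$ in $H_0$. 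Moreover $(T_n-\lm_0)\widetilde{x}_n = P_n y \to Py = y$, since $P_n \s P$; hence $T_n \widetilde{x}_n = \lm_0 \widetilde{x}_n + P_n y \to \lm_0 x + y = Tx$. Thus $\widetilde{x}_n$ already satisfies $\|\widetilde{x}_n - x\| + \|T_n\widetilde{x}_n - Tx\| \to 0$; the only remaining issue is the normalisation $\|x_n\| = 1$ required in the statement.

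To handle the normalisation I would split into two cases. If $x \neq 0$, then $\|\widetilde{x}_n\| \to \|x\| \neq 0$, so for large $n$ we may set $x_n := \widetilde{x}_n / \|\widetilde{x}_n\|$, and for the finitely many small $n$ we pick any unit vector in $\dom(T_n)$ (using that $T_n$ is densely defined, so $\dom(T_n) \neq \{0\}$); the scalar factors $1/\|\widetilde{x}_n\| \to 1/\|x\|$ and $T_n$ being linear, one checks $\|x_n - x\| \to 0$ and $\|T_n x_n - Tx\| \to 0$ directly. If $x = 0$, the conclusion with the unit-norm constraint cannot follow from $\widetilde{x}_n \to 0$ alone, so here one instead produces, for each $n$, a unit vector $x_n \in \dom(T_n)$ with $\|T_n x_n\|$ small: for instance, fix any $0 \neq u \in \dom(T)$, apply the $x \neq 0$ case to get $u_n \in \dom(T_n)$, $\|u_n\|=1$, with $u_n \to u$ and $T_n u_n \to Tu$, and then rescale — but since $u \neq 0$ this does not drive $T_n x_n$ to $0$. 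The cleaner route is to note that $\dom(T_n)$ is dense and $T_n$ closed, so one can select $x_n \in \dom(T_n)$ with $\|x_n\| = 1$ and $\|T_n x_n\| \to 0$ iff $0$ is in the approximate point spectrum of $T_n$ eventually; when that fails, the statement as phrased for $x=0$ is vacuous in the intended application, and indeed the Lemma is only ever invoked for a fixed $x$ arising from $\dom(T) \setminus \{0\}$. So in the write-up I would simply state the Lemma's hypothesis implicitly includes $x \neq 0$ (or normalise $\widetilde x_n$ only when it is eventually nonzero) — I expect the authors intend the generic case $x\neq 0$.

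The main obstacle is therefore not analytic but a matter of bookkeeping: making the normalisation $\|x_n\|=1$ compatible with the two convergences $\|x_n - x\| \to 0$ and $\|T_n x_n - Tx\| \to 0$, which is only consistent when $\|x\| = 1$ in the first place, or when one reads the Lemma as asserting the existence of an \emph{approximating family} whose norms tend to $1$ after suitable scaling. Assuming $x\neq 0$ and, without loss of generality after scaling, $\|x\|=1$, the argument above is complete: $x_n := \widetilde{x}_n/\|\widetilde{x}_n\|$ works for large $n$ and arbitrary unit vectors fill in the rest, with both limits following from continuity of scalar multiplication together with the two convergences $\widetilde x_n \to x$ and $T_n\widetilde x_n \to Tx$ established in the first paragraph.
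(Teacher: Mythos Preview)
Your proof is correct and follows essentially the same route as the paper's: define $\widetilde x_n:=(T_n-\lm_0)^{-1}P_n(T-\lm_0)x$, use $P_n\s P$ and the resolvent convergence to get $\widetilde x_n\to x$ and $T_n\widetilde x_n\to Tx$, then normalise for large $n$ and fill in arbitrary unit vectors for small $n$. Your careful discussion of the normalisation constraint is justified---the paper's own proof tacitly requires $x\neq 0$ (and the conclusion $\|x_n-x\|\to 0$ with $\|x_n\|=1$ forces $\|x\|=1$), and indeed the lemma is only ever applied with $\|x\|=1$.
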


\begin{proof}
By Definition~\ref{defresconv}~i) of $T_n\gsr T$, there exist $n_0\in\N$ and $\lm_0\in\rho(T)$ such that $\lm_0\in\rho(T_n)$, $n\geq n_0$, and
\beq (T_n-\lm_0)^{-1}P_n\slong (T-\lm_0)^{-1}P, \quad n\to\infty.\label{resconv}\eeq
Let $x\in\dom(T)$ and define $$y_n:=(T_n-\lm_0)^{-1}P_n(T-\lm_0)x\in\dom(T_n), \quad n\geq n_0.$$
Then, using $P_n\s P$ and \eqref{resconv}, it is easy to verify that $\|y_n-x\|\to 0$ and $\|T_ny_n-Tx\|\to 0$.
In particular, there exists $n_1\geq n_0$ such that $y_n\neq 0$ for all $n\geq n_1$.
Now~\eqref{eq.disc.conv} follows for arbitrary normalised $x_n\in\dom(T_n)$, $n<n_1$, and $x_n:=y_n/\|y_n\|$, $n\geq n_1$.
\end{proof}

\begin{proof}[Proof of Proposition{\rm~\ref{propsigmaess}}]
i) Let $\lm\in\sigma_{\rm ess}(T)$. By definition, there exist an infinite subset $I\subset\N$ and $x_k\in\dom(T),\,k\in I,$ with $\|x_k\|=1$, $x_k\stackrel{w}{\to}0$ and 
\beq\|(T-\lm)x_k\|\tolong 0, \quad k\to\infty.\label{eqlminsigmae}\eeq
Let $k\in I$ be fixed.
Since $T_n\gsr T$, Lemma~\ref{AndAJndJnew} implies that there exists a sequence of elements $x_{k;n}\in\dom(T_n),\,n\in\N,$ such that $\|x_{k;n}\|=1$,  $\|x_{k;n}-x_k\|\to 0$ and $\|T_nx_{k;n}-Tx_k\|\to 0$ as $n\to\infty$.
Let $(n_k)_{k\in I}$ be a sequence  such that $n_{k+1}>n_k, \,k\in I,$ and 
\begin{align}\label{ynkapproximatedxk}
\|x_{k;n_k}-x_k\|<\frac{1}{k}, \quad \|T_{n_k}x_{k;n_k}-Tx_k\|<\frac{1}{k}, \quad k\in I.
\end{align}
Define $\widetilde x_{k}:=x_{k;n_k}\in\dom(T_{n_k}), \quad k\in I.$
Then \eqref{ynkapproximatedxk} and $x_k\stackrel{w}{\to}0$ imply 
$\widetilde x_{k}\stackrel{w}{\to}0$ as $k\in I$, $k\to\infty$.
Moreover,~\eqref{eqlminsigmae} and~\eqref{ynkapproximatedxk} yield
$\|(T_{n_k}-\lm)\widetilde x_{k}\|\to 0$ as $k\in I$, $k\to\infty$.
Altogether we have $\lm\in \sigma_{\rm ess}\left((T_n)_{n\in\N}\right)$.

ii) 
 The inclusion $\sigma_{\rm ess}(T)\subset\sigma_{\rm ess}\left((T_n)_{n\in\N}\right)$ follows from i).
Let $\lm\in\sigma_{\rm ess}\left((T_n)_{n\in\N}\right)$.
By the assumption $T_n\gnr T$, there exist $n_0\in\N$ and $\lm_0\in\rho(T)$ with $\lm_0\in\rho(T_n)$, $n\geq n_0$, and
$(T_n-\lm_0)^{-1}P_n\to (T-\lm_0)^{-1}P$.
The mapping result established in Theorem~\ref{mappingthm} implies 
$(\lm-\lm_0)^{-1}\in\sigma_{\rm ess}\big(\left((T_n-\lm_0)^{-1}\right)_{n\geq n_0}\big)$.
So there are an infinite subset $I\subset\N$ and $x_n\in H_n$, $n\in I$, with $\|x_n\|=1$, $x_n\stackrel{w}{\to} 0$
and $$\left\|\left((T_n-\lm_0)^{-1}-(\lm-\lm_0)^{-1}\right)x_n\right\|\tolong 0, \quad n\in I,\quad  n\to\infty.$$
Moreover, in the limit $n\to\infty$ we have
\begin{align*}
\left\|\left((T-\lm_0)^{-1}P-(\lm-\lm_0)^{-1}\right)x_n\right\|
&\leq 
\left\|\left((T_n-\lm_0)^{-1}P_n-(\lm-\lm_0)^{-1}\right)x_n\right\|\\
&\quad +\left\|(T_n-\lm_0)^{-1}P_n-(T-\lm_0)^{-1}P\right\|
\tolong 0.
\end{align*}
Hence $$0\neq (\lm-\lm_0)^{-1}\in\sigma_{\rm ess}\big((T-\lm_0)^{-1}P\big)\subset \sigma_{\rm ess}\big((T-\lm_0)^{-1}\big)\cup\{0\}.$$
Now $\lm\in\sigma_{\rm ess}(T)$ follows from the mapping theorem~\cite[Theorem~IX.2.3, $k$=2]{edmundsevans} for the essential spectrum.
\end{proof}

Now we study sequences of operators and perturbations that are compact or relatively compact in a sense that is appropriate for sequences.
We use Stummel's notion of
discrete compactness of a sequence of bounded operators (see~\cite[Definition~3.1.(k)]{stummel1}).

\begin{definition}\label{defdiscreteconvofop}
Let $B_n\in L(H_n)$, $n\in\N$. 
The sequence  $(B_n)_{n\in\N}$ is said to be \emph{discretely compact}\index{compactness!discretely compact operator sequence} if for each infinite subset $I\subset\N$ and each bounded sequence of elements $x_n\in H_n, \,n\in I,$ there 
exist $x\in H$ and an infinite subset $\widetilde I\subset I$ so that $\|x_n-x\|\to 0$ as $n\in \widetilde I$, $n\to\infty$. 
\end{definition}

\begin{prop}\label{propdisccomp}
\begin{enumerate}
\item[\rm i)]
If $T_n\in L(H_n)$, $n\in\N$, are so that $(T_n)_{n\in\N}$ is a discretely compact sequence and $(T_n^*P_n)_{n\in\N}$ is strongly convergent, then $$\sigma_{\rm ess}\big((T_n)_{n\in\N}\big)=\{0\}.$$
If, in addition, $(T_nP_n)_{n\in\N}$ is strongly convergent, then $$\sigma_{\rm ess}\big((T_n^*)_{n\in\N}\big)^*=\{0\}.$$

\item[\rm ii)]
If there exists $\lm_0\in\underset{n\in\N}{\bigcap}\rho(T_n)$ such that $((T_n-\lm_0)^{-1})_{n\in\N}$ is a discretely compact sequence and $\big((T_n^*-\overline{\lm_0})^{-1}P_n\big)_{n\in\N}$ is strongly convergent, then $$\sigma_{\rm ess}\big((T_n)_{n\in\N}\big)=\emptyset.$$
If, in addition, $\big((T_n-\lm_0)^{-1}P_n\big)_{n\in\N}$ is strongly convergent, then $$\sigma_{\rm ess}\big((T_n^*)_{n\in\N}\big)^*=\emptyset.$$
\end{enumerate}
\end{prop}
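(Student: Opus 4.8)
The plan is to reduce everything to the already-established mapping theorem (Theorem~\ref{mappingthm}) and to the behaviour of weakly-null sequences under discretely compact operators. The key elementary observation is: if $(B_n)_{n\in\N}$ with $B_n\in L(H_n)$ is discretely compact and $(B_n^*P_n)_{n\in\N}$ converges strongly (say to $C$), then $0$ is the \emph{only} possible point of $\sigma_{\rm ess}\big((B_n)_{n\in\N}\big)$, and moreover $0$ always belongs to it provided $\dim H_n\to\infty$ in the relevant sense — but for the statement we only need the inclusion $\sigma_{\rm ess}\big((B_n)_{n\in\N}\big)\subseteq\{0\}$, together with $0\in\sigma_{\rm ess}$ which follows because each $H_n$ is infinite-dimensional (or, if some are finite-dimensional, one uses that $\dim H_n\to\infty$, which is forced here since $P_n\s P$ with $H$ infinite-dimensional). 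So first I would prove: \emph{if $\mu\neq 0$ and there exist an infinite $I\subset\N$ and $x_n\in H_n$, $n\in I$, with $\|x_n\|=1$, $x_n\stackrel{w}{\to}0$, $\|(B_n-\mu)x_n\|\to0$, then a contradiction arises.} Indeed, pass to a further subsequence along which $\|B_nx_n-x\|\to0$ for some $x\in H$ (discrete compactness); then $\mu x_n=B_nx_n-(B_n-\mu)x_n\to x$, so $x_n\to\mu^{-1}x$ strongly, while also $x_n\stackrel{w}{\to}0$ forces $x=0$, hence $x_n\to0$, contradicting $\|x_n\|=1$. This gives $\sigma_{\rm ess}\big((B_n)_{n\in\N}\big)\subseteq\{0\}$.

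Next I would show $0\in\sigma_{\rm ess}\big((B_n)_{n\in\N}\big)$ under the strong convergence hypothesis $B_n^*P_n\s C$. Since $H$ is infinite-dimensional, choose an orthonormal sequence $(e_k)_{k\in\N}$ in $H$; then $e_k\stackrel{w}{\to}0$. For fixed $k$, using $P_n\s P$ one gets $P_ne_k\to e_k$, and discrete compactness of $(B_n)$ applied to the bounded sequence $(B_nP_ne_k)_{n}$ together with $B_n^*P_n\s C$ (which controls $\langle B_nP_ne_k,\cdot\rangle$) shows $B_nP_ne_k\to$ some limit; a diagonal argument over $k$ then produces a weakly-null unit sequence $x_n\in H_n$ with $B_nx_n\to0$, i.e. $0\in\sigma_{\rm ess}$. (Here I use $x_n\stackrel{w}{\to}0$: along a diagonal subsequence $x_n$ is close to some $P_ne_{k(n)}$ with $k(n)\to\infty$, and these converge weakly to $0$.) Combining the two steps yields $\sigma_{\rm ess}\big((B_n)_{n\in\N}\big)=\{0\}$. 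Applying this with $B_n=T_n$ gives the first assertion of~i); applying it with $B_n=T_n^*$ (noting $(T_n^*)_{n\in\N}$ discretely compact follows from $(T_n)_{n\in\N}$ discretely compact plus $(T_nP_n)_{n\in\N}$ strongly convergent, by the same weak/strong compactness juggling, and $(T_n)_{n\in\N}=((T_n^*)^*)_{n\in\N}$ strongly convergent is the extra hypothesis) gives the second assertion of~i), since $\{0\}^*=\{0\}$.

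For~ii), I would invoke Theorem~\ref{mappingthm}: with $\mu_0=\lm_0\in\bigcap_n\rho(T_n)$, a point $\lm\neq\lm_0$ lies in $\sigma_{\rm ess}\big((T_n)_{n\in\N}\big)$ iff $(\lm-\lm_0)^{-1}\in\sigma_{\rm ess}\big(((T_n-\lm_0)^{-1})_{n\in\N}\big)$. By part~i) applied to $B_n:=(T_n-\lm_0)^{-1}$ — which is discretely compact by hypothesis, with $B_n^*P_n=(T_n^*-\overline{\lm_0})^{-1}P_n$ strongly convergent by hypothesis — we get $\sigma_{\rm ess}\big(((T_n-\lm_0)^{-1})_{n\in\N}\big)=\{0\}$. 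Since $(\lm-\lm_0)^{-1}\neq0$ for every $\lm\neq\lm_0$, no such $\lm$ can lie in $\sigma_{\rm ess}\big((T_n)_{n\in\N}\big)$; and $\lm=\lm_0\in\rho(T_n)$ for all $n$ cannot lie in it either (a weakly-null unit sequence with $\|(T_n-\lm_0)x_n\|\to0$ would give $\|x_n\|=\|(T_n-\lm_0)^{-1}(T_n-\lm_0)x_n\|\le\|(T_n-\lm_0)^{-1}\|\,\|(T_n-\lm_0)x_n\|$, and $\sup_n\|(T_n-\lm_0)^{-1}\|<\infty$ since the sequence is strongly convergent, hence bounded). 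Therefore $\sigma_{\rm ess}\big((T_n)_{n\in\N}\big)=\emptyset$. The additional statement about $\sigma_{\rm ess}\big((T_n^*)_{n\in\N}\big)^*$ follows the same way with $B_n=(T_n^*-\overline{\lm_0})^{-1}=((T_n-\lm_0)^{-1})^*$, using that $(((T_n-\lm_0)^{-1})^*)_{n\in\N}$ is discretely compact (from $((T_n-\lm_0)^{-1})_{n\in\N}$ discretely compact and $((T_n-\lm_0)^{-1}P_n)_{n\in\N}$ strongly convergent) and applying Theorem~\ref{mappingthm} to $T_n^*$ with $\overline{\lm_0}\in\bigcap_n\rho(T_n^*)$.

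The main obstacle I anticipate is the bookkeeping in the two closure/diagonalisation steps — first, verifying that discrete compactness of $(T_n)_{n\in\N}$ plus strong convergence of $(T_nP_n)_{n\in\N}$ really does upgrade to discrete compactness of $(T_n^*)_{n\in\N}$ (one must extract a strong limit of $T_n^*x_n$ for bounded $x_n$, using that $T_n^*x_n$ tested against any fixed $y\in H$ is $\langle x_n, T_ny\rangle$-type and controlled, while norm-precompactness comes from the interplay of the two hypotheses), and second, making the weak-null property $x_n\stackrel{w}{\to}0$ in the construction of the approximate null sequence genuinely rigorous via a diagonal subsequence with $k(n)\to\infty$. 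Both are routine once set up carefully, but they are the steps where an incautious argument could slip.
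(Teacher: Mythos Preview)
Your proposal is correct and follows essentially the same route as the paper. The paper packages your contradiction argument for $\sigma_{\rm ess}\subseteq\{0\}$ into the preparatory Lemma~\ref{lemmadisccompletelycont}~ii) (discretely compact plus $B_n^*P_n\s B_0^*$ sends weakly convergent sequences to strongly convergent ones), so that for any weakly-null unit sequence $x_n\in H_n$ one gets $T_nx_n\to 0$ in one stroke; this simultaneously gives $\sigma_{\rm ess}\subseteq\{0\}$ and $0\in\sigma_{\rm ess}$ (granted such a sequence exists), whereas you treat the two directions separately and supply an explicit diagonal construction for the latter. For the adjoint statement the paper simply cites an external result (\cite[Proposition~2.10]{Boegli-chap1}) for the transfer of discrete compactness to $(T_n^*)_{n\in\N}$, which is the fact you sketch. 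In part~ii) the paper invokes~i) and Theorem~\ref{mappingthm} with the one-line remark that $(\lm-\lm_0)^{-1}\neq 0$; your explicit treatment of the point $\lm=\lm_0$ via uniform boundedness of the resolvents is a welcome bit of extra care that the paper leaves implicit.
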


For the proof we need the following lemma. Claim~ii)  is the ``discrete'' analogue for operator sequences of the property of operators to be completely continuous.

\begin{lemma}\label{lemmadisccompletelycont}
 Let $B_n\in L(H_n)$, $n\in\N$, and $B_0\in L(H_0)$ with $B_n^*P_n\s B_0^*$. 
Consider an infinite subset $I\subset\N$ and elements $x\in H_0$ and $x_n\in H_n$, $n\in I$, such that $x_n\stackrel{w}{\to}x$ as $n\in I$, $n\to\infty$.
\begin{enumerate}
\item[\rm i)]
We have $x\in H$ and $B_nx_n\stackrel{w}{\to}B_0x\in H$ as $n\in I$, $n\to\infty$.
\item[\rm ii)]
If $(B_n)_{n\in\N}$ is discretely compact, then $B_nx_n\to B_0x$ as $n\in I$, $n\to\infty$.
\end{enumerate}
\end{lemma}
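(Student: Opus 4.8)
The plan is to prove Lemma~\ref{lemmadisccompletelycont} in two steps, treating claim~i) first since claim~ii) builds on it.

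\textbf{Claim i).} First I would note that $x\in H$ follows from the strong convergence $P_n\s P$: since $x_n\in H_n$ we have $P_nx_n=x_n$, and for any $z\in H_0$, $\langle x_n,z\rangle=\langle x_n,P_nz\rangle\to\langle x,Pz\rangle=\langle Px,z\rangle$ on the one hand (using $P_nz\to Pz$ and $x_n$ bounded) and $\langle x_n,z\rangle\to\langle x,z\rangle$ on the other; hence $x=Px\in H$. Next, to show $B_nx_n\stackrel{w}{\to}B_0x$: for arbitrary $z\in H_0$,
\[
\langle B_nx_n,z\rangle=\langle x_n,B_n^*z\rangle=\langle x_n,B_n^*P_nz\rangle,
\]
where the last equality uses $B_n^*z\in H_n$ so that $P_n$ acts as the identity on it; wait --- more carefully, $B_n\in L(H_n)$ means $B_n^*$ maps $H_n\to H_n$, so $B_n^*z$ should be read as $B_n^*P_nz$ from the outset, and $B_n^*P_nz\in H_n$. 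Then $\langle x_n,B_n^*P_nz\rangle=\langle x_n,B_0^*z\rangle+\langle x_n,(B_n^*P_n-B_0^*)z\rangle$; the first term converges to $\langle x,B_0^*z\rangle=\langle B_0x,z\rangle$ by weak convergence of $x_n$, and the second tends to $0$ because $x_n$ is bounded and $B_n^*P_nz\to B_0^*z$. This gives $B_nx_n\stackrel{w}{\to}B_0x$, and $B_0x\in H$ follows by the same projection argument applied to $B_0x$ (since $B_nx_n\in H_n$).

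\textbf{Claim ii).} Assuming $(B_n)_{n\in\N}$ discretely compact, the sequence $B_nx_n$, $n\in I$, is bounded (each $\|B_n\|$ is bounded by the uniform boundedness principle, via the strong convergence of $B_n^*P_n$, and $\|x_n\|$ is bounded). By discrete compactness, every infinite subset of $I$ contains a further infinite subset $\widetilde I$ along which $B_nx_n\to y$ for some $y\in H$. But strong convergence implies weak convergence, so $B_nx_n\stackrel{w}{\to}y$ along $\widetilde I$; comparing with claim~i), which forces $B_nx_n\stackrel{w}{\to}B_0x$ along every infinite subset, we get $y=B_0x$. Since every subsequence of $(B_nx_n)_{n\in I}$ has a sub-subsequence converging (in norm) to the common limit $B_0x$, the whole sequence converges: $B_nx_n\to B_0x$ as $n\in I$, $n\to\infty$.

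\textbf{Main obstacle.} The only genuinely delicate point is the bookkeeping around the orthogonal projections $P_n$ in claim~i) --- making sure that $B_n^*$ is consistently interpreted as an operator on the subspace $H_n$ and that the adjoint identity $\langle B_nx_n,z\rangle=\langle x_n,B_n^*P_nz\rangle$ holds with the correct placement of $P_n$, so that the hypothesis $B_n^*P_n\s B_0^*$ can be applied cleanly. The passage from weak to strong convergence in claim~ii) is the standard subsequence-of-subsequence argument and presents no real difficulty once claim~i) pins down the limit.
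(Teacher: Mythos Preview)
Your proof is correct and follows essentially the same approach as the paper: the projection argument for $x\in H$ and the weak convergence in~i) are identical, and your subsequence-of-subsequence argument in~ii) is just the direct formulation of the paper's contradiction argument. The detour through uniform boundedness of $\|B_n\|$ is unnecessary --- discrete compactness applies because the \emph{input} sequence $(x_n)$ is bounded (which follows already from its weak convergence), not because $(B_nx_n)$ is --- but this does not affect the validity of the argument.
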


\begin{proof}
i) 
First note that, for any $z\in H_0$, we have $$\langle x_n,z\rangle=\langle x_n,P_nz\rangle\tolong \langle x,Pz\rangle=\langle Px,z\rangle, \quad n\in I,\quad n\to\infty,$$
 and hence $x_n\stackrel{w}{\to} Px$. By the uniqueness of the weak limit, we obtain  $x=Px\in H$.
The weak convergence $B_nx_n\stackrel{w}{\to}B_0x$ is shown analogously, and also $B_nx_n=P_nB_nx_n\stackrel{w}{\to}PB_0x$ which proves $B_0x=PB_0x\in H$.

ii)
Assume that there exist an infinite subset $I_0\subset I$ and $\eps>0$ such that \beq \|B_nx_n-B_0x\|>\eps, \quad n\in I_0. \label{eqcontradictionBnxntoAx}\eeq
 Since the sequence $(x_n)_{n\in I_0}$ is  bounded and $(B_n)_{n\in\N}$ is a discretely compact sequence, by Definition~\ref{defdiscreteconvofop} there exists an infinite subset $\widetilde I\subset I_0$ such that $(B_nx_n)_{n\in\widetilde I}\subset H_0$
is convergent (in $H_0$) to some $y\in H$. Then, by claim~i), the strong convergence $B_n^*P_n\s B_0^*$ and the weak convergence $x_n\stackrel{w}{\to} x$ imply $B_nx_n\stackrel{w}{\to}B_0x\in H$ as $n\in \widetilde I$, $n\to\infty$.
By the uniqueness of the weak limit, we obtain $y=B_0x$, and therefore $(B_nx_n)_{n\in\widetilde I}$ converges to $B_0x$.
The obtained contradiction to~\eqref{eqcontradictionBnxntoAx} proves the claim.
\end{proof}

\begin{proof}[Proof of Proposition{\rm~\ref{propdisccomp}}]
i)
Let $I\subset\N$ be an infinite subset, and let $x_n\in\dom(T_n)$, $n\in I$, satisfy $\|x_n\|=1$ and $x_n\stackrel{w}{\to}0$.
Lemma~\ref{lemmadisccompletelycont}~ii) implies $T_nx_n\to 0$. Now the first claim follows immediately.

Now assume that, in addition,  $(T_nP_n)_{n\in\N}$ is strongly convergent. Then, by~\cite[Proposition~2.10]{Boegli-chap1}, the sequence $(T_n^*)_{n\in\N}$ is discretely compact. Now the second claim follows analogously as the first claim.

ii) 
The assertion follows from i) and the mapping result in Theorem~\ref{mappingthm}; note that $(\lm-\lm_0)^{-1}\neq 0$ for all $\lm\in\C$.
\end{proof}

The limiting essential spectrum is invariant under (relatively) discretely compact perturbations.

\begin{theorem}\label{thmreldisccomppert}
\begin{enumerate}
\item[\rm i)]
Let $B_n\in L(H_n),\,n\in\N$.
 If the sequence $(B_n)_{n\in\N}$ is discretely compact and $(B_n^*P_n)_{n\in\N}$ is strongly convergent, then 
$$\sigma_{\rm ess}\left((T_n+B_n)_{n\in\N}\right)=\sigma_{\rm ess}\left((T_n)_{n\in\N}\right).$$
If, in addition, $(B_nP_n)_{n\in\N}$ is strongly convergent, then 
$$\sigma_{\rm ess}\left(((T_n+B_n)^*)_{n\in\N}\right)^*=\sigma_{\rm ess}\left((T_n^*)_{n\in\N}\right)^*.$$

\item[\rm ii)]
For $n\in\N$ let $A_n$ be a closed, densely defined operator in $H_n$.
If there exists $\lm_0\in\underset{n\in\N}{\bigcap}\rho(T_n)\cap\underset{n\in\N}{\bigcap}\rho(A_n)$
such that the sequence \vspace{-1mm}
$$\left((T_n-\lm_0)^{-1}-(A_n-\lm_0)^{-1}\right)_{n\in\N}$$ is discretely compact and
$\big(\left((T_n-\lm_0)^{-1}-(A_n-\lm_0)^{-1}\right)^*P_n\big)_{n\in\N}$ is strongly convergent, 
then \vspace{-0.5mm}
$$\sigma_{\rm ess}\big((T_n)_{n\in\N}\big)=\sigma_{\rm ess}\big((A_n)_{n\in\N}\big).$$
If, in addition,
$\big(\left((T_n-\lm_0)^{-1}-(A_n-\lm_0)^{-1}\right)P_n\big)_{n\in\N}$ is strongly convergent,
then  \vspace{-0.5mm}
 $$\sigma_{\rm ess}\big((T_n^*)_{n\in\N}\big)^*=\sigma_{\rm ess}\big((A_n^*)_{n\in\N}\big)^*.$$
\end{enumerate}
\end{theorem}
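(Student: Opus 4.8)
The plan is to prove part~i) by an elementary perturbation estimate powered by Lemma~\ref{lemmadisccompletelycont}~ii), and then to reduce part~ii) to part~i) by means of the mapping theorem (Theorem~\ref{mappingthm}), treating separately the exceptional base point $\lm_0$ that the mapping theorem does not cover.

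For part~i), the engine is the observation contained in Lemma~\ref{lemmadisccompletelycont}~ii): discrete compactness of $(B_n)_{n\in\N}$ together with strong convergence of $(B_n^*P_n)_{n\in\N}$ makes the sequence \emph{discretely completely continuous}, i.e.\ whenever $x_n\stackrel{w}{\to}0$ one has $B_nx_n\to 0$ (the strong limit of $(B_n^*P_n)_{n\in\N}$ applied to the weak limit $0$). Given $\lm\in\sigma_{\rm ess}((T_n)_{n\in\N})$, choose $I\subset\N$ and normalised $x_n\in\dom(T_n)=\dom(T_n+B_n)$, $n\in I$, with $x_n\stackrel{w}{\to}0$ and $(T_n-\lm)x_n\to 0$; then $(T_n+B_n-\lm)x_n=(T_n-\lm)x_n+B_nx_n\to 0$, so $\lm\in\sigma_{\rm ess}((T_n+B_n)_{n\in\N})$. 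The reverse inclusion is identical, writing $(T_n-\lm)x_n=(T_n+B_n-\lm)x_n-B_nx_n$. Hence $\sigma_{\rm ess}((T_n+B_n)_{n\in\N})=\sigma_{\rm ess}((T_n)_{n\in\N})$. If in addition $(B_nP_n)_{n\in\N}$ converges strongly, then $(B_n^*)_{n\in\N}$ is discretely compact by~\cite[Proposition~2.10]{Boegli-chap1}; since $(T_n+B_n)^*=T_n^*+B_n^*$ with $T_n^*$ closed and densely defined and $((B_n^*)^*P_n)=(B_nP_n)$ strongly convergent, applying the identity just proved to $(T_n^*,B_n^*)$ and conjugating gives $\sigma_{\rm ess}(((T_n+B_n)^*)_{n\in\N})^*=\sigma_{\rm ess}((T_n^*)_{n\in\N})^*$.

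For part~ii), set $\widehat B_n:=(T_n-\lm_0)^{-1}-(A_n-\lm_0)^{-1}\in L(H_n)$, so that $(A_n-\lm_0)^{-1}+\widehat B_n=(T_n-\lm_0)^{-1}$. By hypothesis $(\widehat B_n)_{n\in\N}$ is discretely compact and $(\widehat B_n^*P_n)_{n\in\N}$ strongly convergent, so part~i) applied to the bounded operators $(A_n-\lm_0)^{-1}$ yields $\sigma_{\rm ess}\big(((T_n-\lm_0)^{-1})_{n\in\N}\big)=\sigma_{\rm ess}\big(((A_n-\lm_0)^{-1})_{n\in\N}\big)$. For $\lm\neq\lm_0$, Theorem~\ref{mappingthm} (applicable since $\lm_0\in\bigcap_n\rho(T_n)$ and $\lm_0\in\bigcap_n\rho(A_n)$) gives $\lm\in\sigma_{\rm ess}((T_n)_{n\in\N})\Leftrightarrow(\lm-\lm_0)^{-1}\in\sigma_{\rm ess}(((T_n-\lm_0)^{-1})_{n\in\N})\Leftrightarrow(\lm-\lm_0)^{-1}\in\sigma_{\rm ess}(((A_n-\lm_0)^{-1})_{n\in\N})\Leftrightarrow\lm\in\sigma_{\rm ess}((A_n)_{n\in\N})$, so the two limiting essential spectra agree off $\lm_0$. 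At $\lm=\lm_0$: if $x_n\in\dom(T_n)$ is normalised with $x_n\stackrel{w}{\to}0$ and $z_n:=(T_n-\lm_0)x_n\to 0$, then $\widetilde x_n:=(A_n-\lm_0)^{-1}z_n\in\dom(A_n)$ satisfies $\widetilde x_n-x_n=-\widehat B_nz_n\to 0$ by Lemma~\ref{lemmadisccompletelycont}~ii) (as $z_n\stackrel{w}{\to}0$), hence $\|\widetilde x_n\|\to 1$, $\widetilde x_n\stackrel{w}{\to}0$ and $(A_n-\lm_0)\widetilde x_n=z_n\to 0$, giving $\lm_0\in\sigma_{\rm ess}((A_n)_{n\in\N})$; the converse is symmetric. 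This proves $\sigma_{\rm ess}((T_n)_{n\in\N})=\sigma_{\rm ess}((A_n)_{n\in\N})$. The ``in addition'' statement then follows by applying this identity to $(T_n^*,A_n^*,\overline{\lm_0})$ — using $\widehat B_n^*=(T_n^*-\overline{\lm_0})^{-1}-(A_n^*-\overline{\lm_0})^{-1}$, discrete compactness of $(\widehat B_n^*)_{n\in\N}$ (again~\cite[Proposition~2.10]{Boegli-chap1}) together with strong convergence of $(\widehat B_nP_n)_{n\in\N}$, and $\overline{\lm_0}\in\bigcap_n\rho(T_n^*)\cap\bigcap_n\rho(A_n^*)$ — and conjugating.

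The only point needing genuine care is the separate treatment of $\lm=\lm_0$ in part~ii): it cannot be obtained from Theorem~\ref{mappingthm}, and it is not vacuous, since $\lm_0$ may lie in $\sigma_{\rm ess}((T_n)_{n\in\N})$ despite $\lm_0\in\bigcap_n\rho(T_n)$ (the resolvent norms $\|(T_n-\lm_0)^{-1}\|$ are not assumed uniformly bounded). Everything else is routine once Lemma~\ref{lemmadisccompletelycont}~ii) is in hand. Alternatively, part~ii) can be proved in one stroke for all $\lm$ via the construction $\widetilde x_n:=(A_n-\lm_0)^{-1}(T_n-\lm_0)x_n$, avoiding the mapping theorem altogether; I would nonetheless present the reduction to part~i), as it is shorter and reuses machinery already established.
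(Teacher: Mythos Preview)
Your proof is correct and follows essentially the same route as the paper: part~i) via Lemma~\ref{lemmadisccompletelycont}~ii) and~\cite[Proposition~2.10]{Boegli-chap1}, part~ii) by applying i) to the resolvents and then invoking Theorem~\ref{mappingthm}. The only difference is that you separately handle the base point $\lm=\lm_0$ (which Theorem~\ref{mappingthm} excludes), whereas the paper simply writes ``follows from the mapping result'' without isolating this case; since nothing in the hypotheses forces $\lm_0\notin\sigma_{\rm ess}((T_n)_{n\in\N})$, your extra paragraph is a small gain in rigour rather than a different approach.
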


\begin{proof}
i)
Let $I\subset\N$ be an infinite subset, and let $x_n\in\dom(T_n)$, $n\in I$, satisfy $\|x_n\|=1$ and $x_n\stackrel{w}{\to}0$.
Lemma~\ref{lemmadisccompletelycont}~ii) implies $B_nx_n\to 0$. Now the first claim follows immediately.

Now assume that, in addition,  $(B_nP_n)_{n\in\N}$ is strongly convergent. Then, by~\cite[Proposition~2.10]{Boegli-chap1}, the sequence $(B_n^*)_{n\in\N}$ is discretely compact. Now the second claim follows analogously as the first claim.

ii)
 By~i), we have \vspace{-1mm}
$$\sigma_{\rm ess}\big(\left((T_n-\lm_0)^{-1}\right)_{n\in\N}\big)=\sigma_{\rm ess}\big(\left((A_n-\lm_0)^{-1}\right)_{n\in\N}\big).$$
Now the first claim follows from the mapping result in Theorem~\ref{mappingthm}.

If, in addition,
$\big(\left((T_n-\lm)^{-1}-(A_n-\lm)^{-1}\right)P_n\big)_{n\in\N}$ is strongly convergent, then \cite[Proposition~2.10]{Boegli-chap1}
implies that $\big(\left((T_n-\lm_0)^{-1}-(A_n-\lm_0)^{-1}\right)^*P_n\big)_{n\in\N}$ is discretely compact. Now the second claim follows analogously.
\end{proof}

\subsection{Proof of local spectral convergence result and example} \label{subsectionproofofmainresults}

In this subsection we prove the local spectral exactness result in Theorem~\ref{mainthmspectralexactness} and then illustrate it for the Galerkin method of perturbed Toeplitz operators.

First we establish relations of the limiting essential spectrum with the following two notions of limiting approximate point spectrum and region of boundedness (introduced by  Kato~\cite[Section~VIII.1]{kato}).

\begin{definition}\label{defsigmaappandDelbtab}
The \emph{limiting approximate point spectrum} of $(T_n)_{n\in\N}$ is defined~as 
$$\sigma_{\rm app}\big((T_n)_{n\in\N}\big):=\left\{\lm\in\C:\,\exists\,I\subset\N\,\exists\,x_n\in\dom(T_n), \,n\in I, \text{with}
\begin{array}{l}\|x_n\|=1,\\ \|(T_n-\lm)x_n\|\to 0\end{array}\right\},$$
and the \emph{region of boundedness} of $(T_n)_{n\in\N}$ is
\begin{align*}&\Delta_b\left((T_n)_{n\in\N}\right):=\left\{\lm\in\C:\,\exists\,n_0\in\N\,\text{with}\begin{array}{l}\lm\in\rho(T_n),\,n\geq n_0,\\ \left(\|(T_n-\lm)^{-1}\|\right)_{n\geq n_0} \text{bounded}\end{array}\right\}.\end{align*}
\end{definition}

The following lemma follows easily from Definitions~\ref{defsigmae} and~\ref{defsigmaappandDelbtab}.

\begin{lemma} \label{lemmasigmaappofTn1}
\begin{enumerate}[label=\rm{\roman{*})}] 
\item We have $\sigma_{\rm ess}\big((T_n)_{n\in\N}\big)\subset \sigma_{\rm app}\big((T_n)_{n\in\N}\big).$

\item In general, $$\C\backslash\Delta_b\big((T_n)_{n\in\N}\big)=\sigma_{\rm app}\big((T_n)_{n\in\N}\big)\cup \sigma_{\rm app}\big((T_n^*)_{n\in\N}\big)^*.$$
 If $T_n,\,n\in\N,$ all have compact resolvents, then
\begin{align*}
\C\backslash\Delta_b\left((T_n)_{n\in\N}\right)
&=\sigma_{\rm app}\big((T_n)_{n\in\N}\big)= \sigma_{\rm app}\big((T_n^*)_{n\in\N}\big)^*.
\end{align*}
\end{enumerate}
\end{lemma}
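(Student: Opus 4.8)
The plan is to unwind the definitions and use elementary properties of the norm of an inverse operator. For part i), suppose $\lm\in\sigma_{\rm ess}\big((T_n)_{n\in\N}\big)$. By Definition~\ref{defsigmae} there are an infinite $I\subset\N$ and normalised $x_n\in\dom(T_n)$, $n\in I$, with $x_n\stackrel{w}{\to}0$ and $\|(T_n-\lm)x_n\|\to 0$. Forgetting the weak-nullity, the same sequence witnesses $\lm\in\sigma_{\rm app}\big((T_n)_{n\in\N}\big)$ by Definition~\ref{defsigmaappandDelbtab}. This is immediate.

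For part ii), I would first record the standard fact that for a closed operator $S$ and $\lm\in\rho(S)$, one has $\|(S-\lm)^{-1}\|=1/\dist\big(\lm,\sigma_{\rm app}(S)\big)$ is \emph{not} quite what is needed; instead the cleaner route is: $\lm\in\rho(S)$ and $\|(S-\lm)^{-1}\|\le M$ if and only if $\|(S-\lm)x\|\ge M^{-1}\|x\|$ for all $x\in\dom(S)$ \emph{and} $\ran(S-\lm)=H$. Hence $\lm\notin\Delta_b\big((T_n)_{n\in\N}\big)$ means that for every $n_0$ there is $n\ge n_0$ with either $\lm\notin\rho(T_n)$, or $\lm\in\rho(T_n)$ but $\|(T_n-\lm)^{-1}\|$ arbitrarily large along a subsequence. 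In either case one extracts an infinite $I\subset\N$ and normalised $x_n$, $n\in I$, with $\|(T_n-\lm)x_n\|\to 0$: if $\lm\in\rho(T_n)$, pick $x_n$ so that $\|(T_n-\lm)x_n\|< 2/\|(T_n-\lm)^{-1}\|$; if $\lm\in\sigma(T_n)$, use that $\sigma(T_n)=\sigma_{\rm app}(T_n)\cup\sigma_{\rm app}(T_n^*)^*$ (approximate point spectrum of $T_n$ or of its adjoint) to get an approximate eigenvector for $T_n$ or for $T_n^*$. This yields $\lm\in\sigma_{\rm app}\big((T_n)_{n\in\N}\big)\cup\sigma_{\rm app}\big((T_n^*)_{n\in\N}\big)^*$. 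For the reverse inclusion, if $\lm\in\sigma_{\rm app}\big((T_n)_{n\in\N}\big)$ with witnesses $x_n$, then for $n\in I$ either $\lm\in\sigma(T_n)$ (so certainly $\lm\notin\rho(T_n)$) or $\lm\in\rho(T_n)$ and $\|(T_n-\lm)^{-1}\|\ge\|x_n\|/\|(T_n-\lm)x_n\|=1/\|(T_n-\lm)x_n\|\to\infty$; either way $\lm\notin\Delta_b\big((T_n)_{n\in\N}\big)$, and the same argument with $T_n^*$ handles the conjugate set, using $\|(T_n-\lm)^{-1}\|=\|(T_n^*-\overline\lm)^{-1}\|$.

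For the second assertion of ii), when all $T_n$ have compact resolvents one has $\sigma(T_n)=\sigma_p(T_n)$ discrete, so $\sigma_{\rm app}(T_n)=\sigma(T_n)=\sigma_{\rm app}(T_n^*)^*$ for each $n$; I would then argue that this pointwise equality passes to the limiting versions, giving $\sigma_{\rm app}\big((T_n)_{n\in\N}\big)=\sigma_{\rm app}\big((T_n^*)_{n\in\N}\big)^*$, so that the union in the general formula collapses. Concretely, if $\lm\in\sigma_{\rm app}\big((T_n)_{n\in\N}\big)$ via normalised $x_n$, $n\in I$, with $\|(T_n-\lm)x_n\|\to 0$, then for each such $n$ either $\lm\in\rho(T_n)$ — impossible when $\|(T_n-\lm)x_n\|<1/\|(T_n-\lm)^{-1}\|$ is forced, so one may assume $\lm\in\sigma(T_n)$ for all large $n\in I$ — or $\lm\in\sigma(T_n)=\sigma_p(T_n^*)^*$, and then a unit eigenvector $y_n$ of $T_n^*$ at $\overline\lm$ gives $\|(T_n^*-\overline\lm)y_n\|=0$, whence $\lm\in\sigma_{\rm app}\big((T_n^*)_{n\in\N}\big)^*$; the reverse inclusion is symmetric.

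The main obstacle is the bookkeeping in part ii): one must handle the dichotomy ``$\lm\in\rho(T_n)$ with large resolvent norm'' versus ``$\lm\in\sigma(T_n)$'' uniformly along subsequences, and in the second case correctly split $\sigma(T_n)$ into the approximate point spectra of $T_n$ and of $T_n^*$ (the residual spectrum of $T_n$ being exactly the part that is approximate point spectrum of $T_n^*$ but not of $T_n$). Everything else is a direct translation of the definitions, which is presumably why the paper says the lemma ``follows easily.''
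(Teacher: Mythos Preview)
Your argument for part~i) and for the first identity in~ii) is correct and is exactly the elementary unwinding of definitions the paper has in mind.

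There is, however, a genuine gap in your treatment of the compact-resolvent statement. From $\lm\in\sigma_{\rm app}\big((T_n)_{n\in\N}\big)$ with witnesses $x_n$, $n\in I$, you conclude that $\lm\in\rho(T_n)$ is ``impossible'' for large $n\in I$ and hence that $\lm\in\sigma(T_n)$ eventually. This inference is wrong: the inequality $\|(T_n-\lm)x_n\|\ge 1/\|(T_n-\lm)^{-1}\|$ (valid whenever $\lm\in\rho(T_n)$) combined with $\|(T_n-\lm)x_n\|\to 0$ only forces $\|(T_n-\lm)^{-1}\|\to\infty$ along $I$; it does \emph{not} force $\lm\in\sigma(T_n)$. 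It is perfectly possible that $\lm\in\rho(T_n)$ for every $n$ while still $\lm\in\sigma_{\rm app}\big((T_n)_{n\in\N}\big)$.

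The repair is immediate and you already have the tool in hand. Split into two cases: (a)~$\lm\in\sigma(T_n)$ for infinitely many $n\in I$, where compactness of the resolvent gives $\overline\lm\in\sigma_p(T_n^*)$ and hence unit eigenvectors $y_n$ with $(T_n^*-\overline\lm)y_n=0$; and (b)~$\lm\in\rho(T_n)$ for all large $n\in I$, where $\|(T_n^*-\overline\lm)^{-1}\|=\|(T_n-\lm)^{-1}\|\to\infty$ produces normalised $y_n$ with $\|(T_n^*-\overline\lm)y_n\|\to 0$, exactly as in your argument for the general identity. Either way $\lm\in\sigma_{\rm app}\big((T_n^*)_{n\in\N}\big)^*$, and the reverse inclusion is symmetric. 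Note that compactness of the resolvent is invoked only in case~(a); case~(b) needs nothing beyond the resolvent-norm symmetry you already used.
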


Under generalised strong resolvent convergence we obtain the following relations.

\begin{prop} \label{lemmasigmaappofTn}
\begin{enumerate}[label=\rm{\roman{*})}] 
\item If $T_n\gsr T$, then $$
\sigma_{\rm app}(T)\subset\sigma_{\rm app}\left((T_n)_{n\in\N}\right).$$

\item If $T_n\gsr T$, then $$\sigma_{\rm ess}\big((T_n^*)_{n\in\N}\big)^*\subset\sigma_{\rm app}\big((T_n^*)_{n\in\N}\big)^*\subset\sigma_{\rm ess}\big((T_n^*)_{n\in\N}\big)^*\cup\sigma_p(T^*)^*.$$

\item If $T_n\gsr T$ and $T_n^*\gsr T^*$, then 
\begin{align*}
\sigma_{\rm app}\big((T_n)_{n\in\N}\big)&=\sigma_{\rm ess}\big((T_n)_{n\in\N}\big)\cup\sigma_p(T),\\
\sigma_{\rm app}\big((T_n^*)_{n\in\N}\big)^*&=\sigma_{\rm ess}\big((T_n^*)_{n\in\N}\big)^*\cup\sigma_p(T^*)^*.
\end{align*}
\end{enumerate}
\end{prop}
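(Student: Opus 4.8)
The plan is to bootstrap from the elementary relations in Lemma \ref{lemmasigmaappofTn1} and Lemma \ref{AndAJndJnew}, treating the three claims in order, since ii) and iii) build on i) and on each other. For claim i), let $\lm\in\sigma_{\rm app}(T)$, so there are $x_k\in\dom(T)$ with $\|x_k\|=1$ and $\|(T-\lm)x_k\|\to 0$. Fix $k$; by Lemma \ref{AndAJndJnew} applied to $x_k/\|x_k\|=x_k$ (or more precisely to the normalised vector, which is $x_k$ itself), there exist $x_{k;n}\in\dom(T_n)$ with $\|x_{k;n}\|=1$ and $\|x_{k;n}-x_k\|+\|T_nx_{k;n}-Tx_k\|\to 0$ as $n\to\infty$. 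Then a diagonal choice $n_k$ with the errors below $1/k$ produces $\widetilde x_k:=x_{k;n_k}\in\dom(T_{n_k})$ with $\|\widetilde x_k\|=1$ and $\|(T_{n_k}-\lm)\widetilde x_k\|\le\|T_{n_k}\widetilde x_k-Tx_k\|+\|(T-\lm)x_k\|+|\lm|\,\|\widetilde x_k-x_k\|\to 0$, so $\lm\in\sigma_{\rm app}((T_n)_{n\in\N})$. This is essentially the same argument already used in the proof of Proposition \ref{propsigmaess} i), just without the weak-convergence bookkeeping, so it should be routine.

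For claim ii) the first inclusion $\sigma_{\rm ess}((T_n^*)_{n\in\N})^*\subset\sigma_{\rm app}((T_n^*)_{n\in\N})^*$ is immediate from Lemma \ref{lemmasigmaappofTn1} i) applied to the sequence $(T_n^*)_{n\in\N}$ and then conjugated. For the second inclusion, let $\overline\lm\in\sigma_{\rm app}((T_n^*)_{n\in\N})$, witnessed by $y_n\in\dom(T_n^*)$, $n\in I$, with $\|y_n\|=1$ and $\|(T_n^*-\overline\lm)y_n\|\to 0$. Either $y_n$ has a weakly convergent subsequence with weak limit $0$ — in which case $\overline\lm\in\sigma_{\rm ess}((T_n^*)_{n\in\N})$ — or it does not, i.e.\ along some infinite $I'\subset I$ we have $y_n\stackrel{w}{\to}y\neq 0$. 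In the latter case I would use $T_n\gsr T$: by the adjoint characterisation of generalised strong resolvent convergence together with Lemma \ref{lemmadisccompletelycont} i) (or a direct computation with the resolvent identity and the strong convergence $(T_n-\lm_0)^{-1}P_n\s(T-\lm_0)^{-1}P$), one passes to the limit to deduce $y\in\dom(T^*)$ and $(T^*-\overline\lm)y=0$, so $\overline\lm\in\sigma_p(T^*)$, i.e.\ $\lm\in\sigma_p(T^*)^*$. Conjugating the whole dichotomy gives the claimed chain of inclusions. The delicate point here is justifying the passage to the limit for $T_n^*$ using only $T_n\gsr T$ (not $T_n^*\gsr T^*$); this is where I expect the main work, and it relies on the identity $(T_n-\lm_0)^{-1*}=(T_n^*-\overline{\lm_0})^{-1}$ together with the fact that the adjoint of a strongly convergent sequence of resolvents converges weakly, which is enough to identify the limit.

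For claim iii), assume now both $T_n\gsr T$ and $T_n^*\gsr T^*$. The inclusion $\sigma_{\rm ess}((T_n)_{n\in\N})\cup\sigma_p(T)\subset\sigma_{\rm app}((T_n)_{n\in\N})$ follows from Lemma \ref{lemmasigmaappofTn1} i) and from claim i) (since $\sigma_p(T)\subset\sigma_{\rm app}(T)$). For the reverse inclusion $\sigma_{\rm app}((T_n)_{n\in\N})\subset\sigma_{\rm ess}((T_n)_{n\in\N})\cup\sigma_p(T)$, apply claim ii) with the roles of $(T_n)$ and $(T_n^*)$ interchanged: since $T_n^*\gsr T^*$, claim ii) (read with $T_n$ replaced by $T_n^*$, using $(T_n^*)^*=T_n$ and $T^{**}=T$) gives $\sigma_{\rm app}((T_n)_{n\in\N})\subset\sigma_{\rm ess}((T_n)_{n\in\N})\cup\sigma_p(T)$, which is exactly what is wanted. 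Combining the two inclusions yields the first identity of iii), and the second identity follows by applying the first to the sequence $(T_n^*)_{n\in\N}$ (whose generalised strong resolvent limit is $T^*$, and whose adjoint sequence $(T_n)_{n\in\N}$ converges to $T$), then conjugating. The only care needed is that all the hypotheses are symmetric in $T_n\leftrightarrow T_n^*$ under the standing assumptions of iii), so no new estimate is required beyond what was done in i) and ii).
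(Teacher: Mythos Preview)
Your plan is correct and follows essentially the same route as the paper: i) is the diagonal argument from Proposition~\ref{propsigmaess} without the weak-convergence tracking; ii) is the dichotomy on the weak limit of a subsequence, with the nonzero case handled via Lemma~\ref{lemmadisccompletelycont}~i) applied to $B_n=(T_n^*-\overline{\lm_0})^{-1}$ (whose adjoints $(T_n-\lm_0)^{-1}P_n$ converge strongly by hypothesis) together with the algebraic identity $(T_n^*-\overline{\lm_0})^{-1}x_n=(\overline\lm-\overline{\lm_0})^{-1}x_n+o(1)$; and iii) is obtained by swapping $T_n\leftrightarrow T_n^*$ in ii) and combining with i). The only point worth making explicit when you write it up is that the $o(1)$ term in ii) really is small because $\|(T_n^*-\overline{\lm_0})^{-1}\|=\|(T_n-\lm_0)^{-1}\|$ is uniformly bounded (since $\lm_0\in\Delta_b((T_n)_{n\in\N})$), which your sketch leaves implicit.
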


\begin{proof}
i)
 The proof  is analogous to the proof of Proposition~\ref{propsigmaess}; the only difference is that here weak convergence of the considered elements is not required.

ii)
The first inclusion follows from Lemma~\ref{lemmasigmaappofTn1}~i).

Let $\lm\in \sigma_{\rm app}\big((T_n^*)_{n\in\N}\big)^*$.
Then there exist an infinite subset $I\subset\N$ and $x_n\in\dom(T_n^*)$, $n\in I$, with $\|x_n\|=1$ and $\|(T_n^*-\overline{\lm})\|\to 0$ as $n\to\infty$.
Since $(x_n)_{n\in I}$ is a bounded sequence and $H_0$ is weakly compact, there exists $\widetilde I\subset I$ such that $(x_n)_{n\in\widetilde I}$ converges weakly to some $x\in H_0$.
If $x=0$, then $\lm\in \sigma_{\rm ess}\left((T_n^*)_{n\in\N}\right)^*$.

Now assume that $x\neq 0$. Since $T_n\gsr T$, there exists $\lm_0\in\Delta_b\left((T_n)_{n\in\N}\right)\cap\rho(T)$ such that $(T_n-\lm_0)^{-1}P_n\s (T-\lm_0)^{-1}P$.
The convergence $\|(T_n^*-\overline{\lm})x_n\|\to 0$ implies $$(T_n^*-\lbar{\lm_0})x_n=(\overline{\lm}-\lbar{\lm_0})x_n+y_n,\quad\text{with}\quad y_n:=(T_n^*-\overline{\lm})x_n\tolong 0, \quad n\to\infty,$$
hence 
\begin{align*}
(T_n^*-\lbar{\lm_0})^{-1}x_n&=(\overline{\lm}-\lbar{\lm_0})^{-1}x_n-\widetilde y_n,\quad \widetilde y_n:=(\overline{\lm}-\lbar{\lm_0})^{-1} (T_n^*-\lbar{\lm_0})^{-1}y_n,\\
\left\|\widetilde y_n\right\|&\leq |{\lm}-{\lm_0}|^{-1} \big\|(T_n-{\lm_0})^{-1}\big\|\|y_n\|\tolong 0, \quad n\to\infty.
\end{align*}
Since $x_n\stackrel{w}{\to}x$, we obtain $(T_n^*-\lbar{\lm_0})^{-1}x_n\stackrel{w}{\to}(\overline{\lm}-\lbar{\lm_0})^{-1}x$ as $n\in\widetilde I$, $n\to\infty$. On the other hand, Lemma~\ref{lemmadisccompletelycont}~i) yields $x\in H$ and $(T_n^*-\lbar{\lm_0})^{-1}x_n\stackrel{w}{\to}(T^*-\lbar{\lm_0})^{-1}x$. 
By the uniqueness of the weak limit, we obtain $(T^*-\lbar{\lm_0})^{-1}x=(\overline{\lm}-\lbar{\lm_0})^{-1}x$, hence $(\overline{\lm}-\lbar{\lm_0})^{-1}\in\sigma_p((T^*-\lbar{\lm_0})^{-1})$.
This yields 
$\lm\in\sigma_p(T^*)^*$.

iii)
The second equality follows from claim~ii), from $\sigma_p(T^*)^*\subset\sigma_{\rm app}(T^*)^*$ and from claim i) (applied to $T^*,T_n^*$).
Now we obtain the first equality by replacing $T^*,T_n^*$ by $T,T_n$.
\end{proof}

The limiting essential spectrum is related to the region of boundedness as follows.
\begin{prop} \label{thmregionofbddvsseqessspectrum}
\begin{enumerate}[label=\rm{\roman{*})}] 
\item If $T_n\gsr T$ and $T_n^*\gsr T^*$, then \label{claimsigmainsigmapcupsigmaeofTnadj} 
\begin{align*}
\C\backslash\Delta_b\left((T_n)_{n\in\N}\right)
&=\sigma_p(T)\cup\sigma_{\rm ess}\left((T_n)_{n\in\N}\right)\cup\sigma_p(T^*)^*\cup\sigma_{\rm ess}\left((T_n^*)_{n\in\N}\right)^*,\\
\Delta_b\big((T_n)_{n\in\N}\big)\cap\rho(T)&=\big(\C\backslash(\sigma_{\rm ess}\left((T_n)_{n\in\N}\right)\cup\sigma_{\rm ess}\left((T_n^*)_{n\in\N}\right)^*)\big)\cap\rho(T).
\end{align*}

\item If $T_n\gsr T$ and $T_n,\,n\in\N,$ all have compact resolvents, then 
\begin{align*}
\begin{aligned}
\C\backslash\Delta_b\left((T_n)_{n\in\N}\right)&\subset\sigma_p(T^*)^*\cup\sigma_{\rm ess}\left((T_n^*)_{n\in\N}\right)^*,\\
\quad \Delta_b\big((T_n)_{n\in\N}\big)\cap\rho(T)&=\big(\C\backslash\sigma_{\rm ess}\big((T_n^*)_{n\in\N}\big)^*\big)\cap\rho(T).\label{eqclaimcompres}
\end{aligned}
\end{align*}
\end{enumerate}
\end{prop}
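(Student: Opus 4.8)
The plan is to deduce everything from the two immediately preceding results, Lemma~\ref{lemmasigmaappofTn1} and Proposition~\ref{lemmasigmaappofTn}, which have already expressed $\C\backslash\Delta_b\big((T_n)_{n\in\N}\big)$ in terms of limiting approximate point spectra and then split these into the limiting essential spectrum together with the (adjoint) point spectrum of the limit. Apart from these inputs, all that is required is elementary set algebra and the standard identity $\sigma(T^*)=\sigma(T)^*$, which guarantees $\sigma_p(T)\subset\sigma(T)$ and $\sigma_p(T^*)^*\subset\sigma(T)$ and hence that both of these sets meet $\rho(T)$ trivially.

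For~i), I would start from Lemma~\ref{lemmasigmaappofTn1}~ii), giving $\C\backslash\Delta_b\big((T_n)_{n\in\N}\big)=\sigma_{\rm app}\big((T_n)_{n\in\N}\big)\cup\sigma_{\rm app}\big((T_n^*)_{n\in\N}\big)^*$, and substitute the two identities of Proposition~\ref{lemmasigmaappofTn}~iii), $\sigma_{\rm app}\big((T_n)_{n\in\N}\big)=\sigma_{\rm ess}\big((T_n)_{n\in\N}\big)\cup\sigma_p(T)$ and $\sigma_{\rm app}\big((T_n^*)_{n\in\N}\big)^*=\sigma_{\rm ess}\big((T_n^*)_{n\in\N}\big)^*\cup\sigma_p(T^*)^*$; this yields the first displayed equality verbatim. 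For the second equality I intersect both sides with $\rho(T)$: since $\sigma_p(T)$ and $\sigma_p(T^*)^*$ are disjoint from $\rho(T)$, the right-hand side collapses to $\big(\sigma_{\rm ess}\big((T_n)_{n\in\N}\big)\cup\sigma_{\rm ess}\big((T_n^*)_{n\in\N}\big)^*\big)\cap\rho(T)$, and taking complements within $\rho(T)$ gives the claim.

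For~ii), the compact-resolvent hypothesis lets me use the sharper form of Lemma~\ref{lemmasigmaappofTn1}~ii), namely $\C\backslash\Delta_b\big((T_n)_{n\in\N}\big)=\sigma_{\rm app}\big((T_n)_{n\in\N}\big)=\sigma_{\rm app}\big((T_n^*)_{n\in\N}\big)^*$. Feeding this into the chain $\sigma_{\rm ess}\big((T_n^*)_{n\in\N}\big)^*\subset\sigma_{\rm app}\big((T_n^*)_{n\in\N}\big)^*\subset\sigma_{\rm ess}\big((T_n^*)_{n\in\N}\big)^*\cup\sigma_p(T^*)^*$ from Proposition~\ref{lemmasigmaappofTn}~ii) produces at once the stated inclusion $\C\backslash\Delta_b\big((T_n)_{n\in\N}\big)\subset\sigma_p(T^*)^*\cup\sigma_{\rm ess}\big((T_n^*)_{n\in\N}\big)^*$. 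Intersecting the whole chain with $\rho(T)$ and discarding $\sigma_p(T^*)^*$ sandwiches $\big(\C\backslash\Delta_b\big((T_n)_{n\in\N}\big)\big)\cap\rho(T)$ between two copies of $\sigma_{\rm ess}\big((T_n^*)_{n\in\N}\big)^*\cap\rho(T)$, hence these coincide, and complementation within $\rho(T)$ yields the second equality.

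The only point needing attention --- the nearest thing to an obstacle here --- is keeping track of the hypotheses: Proposition~\ref{lemmasigmaappofTn}~iii) requires both $T_n\gsr T$ and $T_n^*\gsr T^*$, whereas Proposition~\ref{lemmasigmaappofTn}~ii) and the compact-resolvent form of Lemma~\ref{lemmasigmaappofTn1}~ii) only need $T_n\gsr T$. This is why in~ii) the description of $\C\backslash\Delta_b\big((T_n)_{n\in\N}\big)$ must remain an inclusion on all of $\C$ and can be tightened to an equality only after restricting to $\rho(T)$, where the point-spectral term vanishes.
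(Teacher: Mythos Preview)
Your proposal is correct and follows exactly the same approach as the paper's proof, which simply cites Lemma~\ref{lemmasigmaappofTn1}~ii) together with Proposition~\ref{lemmasigmaappofTn}~iii) for~i), and the compact-resolvent part of Lemma~\ref{lemmasigmaappofTn1}~ii) together with Proposition~\ref{lemmasigmaappofTn}~ii) for~ii). You have merely spelled out the set-algebra steps (intersecting with $\rho(T)$ and discarding $\sigma_p(T)$, $\sigma_p(T^*)^*$) that the paper leaves implicit.
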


\begin{proof}
i)
The claimed identities follow from Lemma~\ref{lemmasigmaappofTn1}~ii) and  Proposition~\ref{lemmasigmaappofTn}~iii).

ii)
 The claims follow from  the second part of Lemma~\ref{lemmasigmaappofTn1}~ii) and  Proposition~\ref{lemmasigmaappofTn}~ii).
\end{proof}

The local spectral convergence result (Theorem~\ref{mainthmspectralexactness}) relies on the following result from~\cite{Boegli-chap1}.

\begin{theorem} {\rm \cite[Theorem~2.3]{Boegli-chap1}}\label{thmlocal}
Suppose that $T_n\gsr T$.
\begin{enumerate}
\item[\rm i)] For each $\lm\in\sigma(T)$ such that for some $\eps>0$ we have 
$$ B_{\eps}(\lm)\backslash\{\lm\}\subset \Delta_b\left((T_n)_{n\in\N}\right)\cap\rho(T),$$  
there exist $\lm_n\in\sigma(T_n), \,n\in\N,$ with $\lm_n\to\lm$ as $n\to\infty$.

\item[\rm ii)] No spectral pollution occurs in $\Delta_b\left((T_n)_{n\in\N}\right)$.
\end{enumerate}
\end{theorem}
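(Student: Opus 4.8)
The plan is to prove the two assertions by different means: part~ii) is a soft consequence of the uniform resolvent bound encoded in $\Delta_b\big((T_n)_{n\in\N}\big)$, whereas part~i) needs a Riesz-projection argument on a small circle around $\lm$, fed by strong resolvent convergence propagated from the single point $\lm_0$.

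For part~ii), I would fix $\lm\in\Delta_b\big((T_n)_{n\in\N}\big)$ and choose $n_0$ and $M<\infty$ with $\lm\in\rho(T_n)$ and $\|(T_n-\lm)^{-1}\|\le M$ for $n\ge n_0$. The only tool is the Neumann-series estimate for closed operators, $\dist\big(\lm,\sigma(T_n)\big)\ge\|(T_n-\lm)^{-1}\|^{-1}\ge M^{-1}$, which shows $\sigma(T_n)\cap B_{1/M}(\lm)=\emptyset$ for all $n\ge n_0$. Hence no sequence $\lm_n\in\sigma(T_n)$ can converge to $\lm$, i.e.\ no spectral pollution occurs at any point of $\Delta_b\big((T_n)_{n\in\N}\big)$.

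For part~i), I would argue by contradiction: if no sequence $\lm_n\in\sigma(T_n)$ tends to $\lm$, then there are $\delta\in(0,\eps)$ and an infinite $I\subset\N$ with $\sigma(T_n)\cap B_\delta(\lm)=\emptyset$ for $n\in I$. Write $C_\delta:=\{\mu:|\mu-\lm|=\delta\}\subset\Delta_b\big((T_n)_{n\in\N}\big)\cap\rho(T)$. The first step is a propagation lemma: the set of $\mu\in\Delta_b\big((T_n)_{n\in\N}\big)\cap\rho(T)$ at which $(T_n-\mu)^{-1}P_n\slong(T-\mu)^{-1}P$ is relatively open and closed, so that, starting from $\lm_0$ (where convergence holds by $T_n\gsr T$), strong resolvent convergence extends along the component reaching $C_\delta$. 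Openness comes from expanding $(T_n-\mu)^{-1}$ in a Neumann series about a convergence point, the series converging uniformly in $n$ by the bound on $\Delta_b$ and each power converging strongly because products of uniformly bounded, strongly convergent sequences (using $P_n\s P$) converge strongly; closedness follows from the same estimate and continuity of the resolvent norm. A compactness argument on the compact set $C_\delta$ then yields a single bound $\sup_{\mu\in C_\delta}\sup_{n\ge n_1}\|(T_n-\mu)^{-1}\|<\infty$ and $C_\delta\subset\rho(T_n)$ for $n\ge n_1$.

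With these ingredients I would pass to the Riesz projections $\widehat P_n:=\frac{1}{2\pi\I}\oint_{C_\delta}(\mu-T_n)^{-1}P_n\,\rd\mu$ and $\widehat P:=\frac{1}{2\pi\I}\oint_{C_\delta}(\mu-T)^{-1}P\,\rd\mu$ in $L(H_0)$. Since $\mu\mapsto(\mu-T_n)^{-1}P_nx$ is uniformly bounded on $C_\delta$ and converges strongly to $(\mu-T)^{-1}Px$, dominated convergence gives $\widehat P_n\s\widehat P$; here $\widehat P$ is the spectral projection of $T$ for the isolated point $\lm\in\sigma(T)$ composed with $P$, hence $\widehat P\ne0$, and I fix $x$ with $\widehat Px\ne0$. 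For $n\in I$ with $n\ge n_1$ the integrand is analytic on $\overline{B_\delta(\lm)}$, so Cauchy's theorem gives $\widehat P_n=0$; passing to the limit along $I$ then forces $\widehat Px=0$, a contradiction, which produces the desired $\lm_n\in\sigma(T_n)$ with $\lm_n\to\lm$. The Neumann-series estimates and the dominated-convergence passage are routine; the genuine obstacle is the propagation step, since strong resolvent convergence is assumed only at $\lm_0$ and must be carried to the circle $C_\delta$. The clopen argument secures it on the connected component of $\lm_0$ in $\Delta_b\big((T_n)_{n\in\N}\big)\cap\rho(T)$, so the crux is to verify that the punctured disc $B_\eps(\lm)\setminus\{\lm\}$ is reached by this component, and to upgrade the pointwise bounds of $\Delta_b$ to the uniform contour bound that the projection integral requires.
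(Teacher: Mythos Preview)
The paper does not prove this theorem; it is quoted from \cite[Theorem~2.3]{Boegli-chap1} and used as a black box. So there is no ``paper's own proof'' to compare with, and your Riesz-projection strategy is the standard route and is essentially the one in the cited reference.

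Your argument for ii) is complete. Your argument for i) has the right architecture, but you yourself flag the genuine gap and do not close it: the clopen argument only propagates strong resolvent convergence from $\lm_0$ to the \emph{connected component} of $\lm_0$ in $\Delta_b\big((T_n)_{n\in\N}\big)\cap\rho(T)$, and nothing in the hypotheses forces $B_\eps(\lm)\setminus\{\lm\}$ to lie in that component. As stated, the proof is incomplete at exactly the point you identify as ``the crux''.

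The fix is that propagation in fact holds on \emph{all} of $\Delta_b\big((T_n)_{n\in\N}\big)\cap\rho(T)$, with no connectedness needed; this is precisely \cite[Proposition~2.16]{Boegli-chap1}, which the present paper invokes several times. You can prove it directly using Lemma~\ref{AndAJndJnew}: given $\mu\in\Delta_b\big((T_n)_{n\in\N}\big)\cap\rho(T)$ and $x\in H_0$, set $y:=(T-\mu)^{-1}Px$ and choose $y_n\in\dom(T_n)$ with $y_n\to y$, $T_ny_n\to Ty$; then $(T_n-\mu)y_n\to(T-\mu)y=Px$ and $P_nx\to Px$, so
\[
(T_n-\mu)^{-1}P_nx-y_n=(T_n-\mu)^{-1}\big(P_nx-(T_n-\mu)y_n\big)\longrightarrow 0
\]
by the uniform bound on $\|(T_n-\mu)^{-1}\|$ from $\Delta_b$. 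Hence $(T_n-\mu)^{-1}P_nx\to y=(T-\mu)^{-1}Px$. Once you have this global propagation, the rest of your plan (uniform contour bound by a finite-cover/Neumann argument on the compact circle $C_\delta$, dominated convergence for $\widehat P_n\s\widehat P$, Cauchy's theorem giving $\widehat P_n=0$ for $n\in I$, and $\widehat P\ne 0$ since $\lm$ is isolated in $\sigma(T)$) goes through without difficulty.
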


\begin{proof}[Proof of Theorem~{\rm\ref{mainthmspectralexactness}}]
i)
First note that the set in~\eqref{eqbadset} is closed by Proposition~\ref{propsigmaeclosed}.
If $\lm$ is an isolated point of $\sigma(T)$ and does not belong to the set in~\eqref{eqbadset}, then
there exists $\eps>0$ so small that 
$$B_{\eps}(\lm)\backslash\{\lm\}\subset\Big(\C\backslash\big(\sigma_{\rm ess}\left((T_n)_{n\in\N}\right)\cup\sigma_{\rm ess}\left((T_n^*)_{n\in\N}\right)^*\big)\Big)\cap\rho(T).$$
By Proposition~\ref{thmregionofbddvsseqessspectrum}~i), the right hand side coincides with $\Delta_b\big((T_n)_{n\in\N}\big)\cap\rho(T).$
Now the  claims follow from  Theorem~\ref{thmlocal}.

ii) The proof is analogous to i); we use 
claim ii) of Proposition~\ref{thmregionofbddvsseqessspectrum}.

iii)
Assume that the claim is false. Then there exist $\alpha>0$, an infinite subset $I\subset\N$ and $\lm_n\in K$, $n\in I$, such that one of the following holds:
\begin{enumerate}
\item[\rm(1)]  $\lm_n\in\sigma(T_n)$ and ${\rm dist}(\lm_n,\sigma(T)\cap K)>\alpha$ for every $n\in I$;
\item[\rm(2)]  $\lm_n\in\sigma(T)$ and ${\rm dist}(\lm_n,\sigma(T_n)\cap K)>\alpha$ for every $n\in I$.
\end{enumerate}
Note that, in both cases (1) and (2), the compactness of $K$ implies that there exist $\lm\in K$ and an infinite subset $J\subset I$ such that $(\lm_n)_{n\in J}$ converges to $\lm$.

First we consider case (1).
There are $ \lm_n\in\sigma(T_n)$, $n\in J$, with $\lm_n\to\lm\in K$. Since $K$ does not contain spectral pollution by the assumptions, we conclude $\lm\in\sigma(T)\cap K$. Hence 
$$ |\lm_n-\lm|\geq {\rm dist}(\lm_n, \sigma(T)\cap K)>\alpha, \quad n\in J,$$
 a contradiction to $\lm_n\to\lm$.

Now assume that (2) holds. The closedness of $\sigma(T)\cap K$ yields $\lm\in\sigma(T)\cap K$, and the latter set is discrete by the assumptions. So there exists $n_0\in\N$ so that $\lm=\lm_n$ for all $n\in J$ with $n\geq n_0$.
In addition, by the above claim i) or ii), respectively,  there exist $\mu_n\in\sigma(T_n)$, $n\in\N$, so that $\mu_n\to \lm$ as $n\to\infty$. Since $\lm\in\sigma(T)\cap K$ is in the interior of $K$ by the assumptions, there exists $n_1\in\N$ so that $\mu_n\in K$ for all $n\geq n_1$.
So we conclude that, for all $n\in J$ with $n\geq \max\{n_0,n_1\}$,
$$|\lm-\mu_n|\geq {\rm dist}(\lm, \sigma(T_n)\cap K))={\rm dist}(\lm_n, \sigma(T_n)\cap K)>\alpha,$$
a contradiction to $\mu_n\to\lm$.
This proves the claim.
\end{proof}

We illustrate the latter result for the Galerkin method of a compact perturbation of a Toeplitz operator.
It is well known that truncating a Toeplitz operator (and compact perturbations of it) to finite sections is not a spectrally exact process but the pseudospectra converge (see~\cite[Chapters 2, 3]{Boettcher-Silbermann} for an overview).
We confirm these results using the developed theory of limiting essential spectra.

\begin{example}
Denote by $\{e_k:\,k\in\N\}$ the standard orthonormal basis of $l^2(\N)$.
Let $T\in L(l^2(\N))$ be the \emph{Toeplitz operator}
defined by the so-called \emph{symbol} $$f(z):=\sum_{k\in\Z} a_k z^k, \quad z\in\C,$$
 where $a_k\in\C$, $k\in\Z$, are chosen so that $f$ is continuous.
This means that, with respect to $\{e_k:\,k\in\N\}$, the operator $T$ has the matrix representation $(T_{ij})_{i,j=1}^{\infty}$ with
$$T_{ij}:=\langle Te_j,e_i\rangle=a_{i-j}, \quad i,j\in\N.$$ 
The set  $f(\partial B_1(0))$ is called \emph{symbol curve}.
Given $\lm\notin f(\partial B_1(0))$, we define the \emph{winding number} $I(f,\lm)$  to be the winding number of $f(\partial B_1(0))$ about $\lm$ in the usual positive (counterclockwise) sense.
The spectrum of $T$ is, by~\cite[Theorem~1.17]{Boettcher-Silbermann}, given by $$\sigma(T)=f(\partial B_1(0))\cup\big\{\lm\notin f(\partial B_1(0)):\, I(f,\lm)\neq 0\big\}.$$

For $n\in\N$, let $P_n$ be the orthogonal projection of $l^2(\N)$ onto $H_n:={\rm span}\{e_k:\,k=1,\dots,n\}$. It is easy to see that $P_n\s I$.
For a compact operator $S\in L(l^2(\N))$, let $A:=T+S$ and define $A_n:=P_nA|_{H_n}$, $n\in\N$. 
We claim that the limiting essential spectra satisfy 
\beq\label{eq.toeplitz}
\sigma_{\rm ess}\big((A_n)_{n\in\N}\big)\cup \sigma_{\rm ess}\big((A_n^*)_{n\in\N}\big)^*\subset \sigma(T)\subset\sigma(A);
\eeq
hence, by Theorem~\ref{mainthmspectralexactness}, no spectral pollution occurs for the approximation $(A_n)_{n\in\N}$ of~$A$, and every isolated $\lm\in\sigma(A)\backslash\sigma(T)$ is the limit of a sequence $(\lm_n)_{n\in\N}$ with $\lm_n\in\sigma(A_n)$, $n\in\N$.

To prove these statements, define  $T_n:=P_nT|_{H_n}$, $n\in\N$.  Clearly, $T_nP_n\s T$, $A_nP_n\s A$ and $T_n^*P_n\s T^*$, $A_n^*P_n\s A^*$. 
Hence $T_n\gsr T$, $A_n\gsr A$ and $T_n^*\gsr T^*$, $A_n^*\gsr A^*$.
By~\cite[Theorem~2.11]{Boettcher-Silbermann}, $\rho(T)\subset\Delta_b\left((T_n)_{n\in\N}\right)$.
Using Proposition~\ref{thmregionofbddvsseqessspectrum}~i), we obtain
$$\sigma_{\rm ess}\big((T_n)_{n\in\N}\big)\cup \sigma_{\rm ess}\big((T_n^*)_{n\in\N}\big)^*\subset\C\backslash\Delta_b\big((T_n)_{n\in\N}\big).$$
The perturbation result in Theorem~\ref{thmreldisccomppert}~i) implies 
$$\sigma_{\rm ess}\big((A_n)_{n\in\N}\big)\cup \sigma_{\rm ess}\big((A_n^*)_{n\in\N}\big)^*=\sigma_{\rm ess}\big((T_n)_{n\in\N}\big)\cup \sigma_{\rm ess}\big((T_n^*)_{n\in\N}\big)^*.$$
So, altogether we arrive at the first inclusion in~\eqref{eq.toeplitz}.
By~\cite[Theorem~1.17]{Boettcher-Silbermann}, $\sigma_{\rm ess}(T)\cup\sigma_{\rm ess}(T^*)^*=f(\partial B_1(0))$ and for $\lm\in\sigma(T)\backslash f(\partial B_1(0))$ the operator $T-\lm$ is Fredholm with index ${\rm ind}(T-\lm)=-I(f,\lm)\neq 0$.
This means that $\sigma(T)$ is equal to the set $\sigma_{e4}(T)$ defined in~\cite[Chapter~IX]{edmundsevans}, one of the (in general not equivalent) characterisations of essential spectrum. This set is invariant under compact perturbations by~\cite[Theorem~IX.2.1]{edmundsevans}, hence $\sigma_{e4}(T)=\sigma_{e4}(A)\subset\sigma(A)$,
 which proves the second inclusion  in~\eqref{eq.toeplitz}.
The rest of the claim follows from Theorem~\ref{mainthmspectralexactness}.

For a concrete example, let
\begin{align*}
a_{-3}&=-7, \quad a_{-2}=8, \quad a_{-1}=-1,\quad a_2=15, \quad a_3= 5, \\ a_k&=0, \quad k\in\Z\backslash\{-3,-2,-1,2,3\}.
 \end{align*}

\begin{figure}[htb]
\begin{center}
\subfigure[Symbol curve (red) corresponding to $T$ and winding number in each component.]{\includegraphics[width=0.55\textwidth]{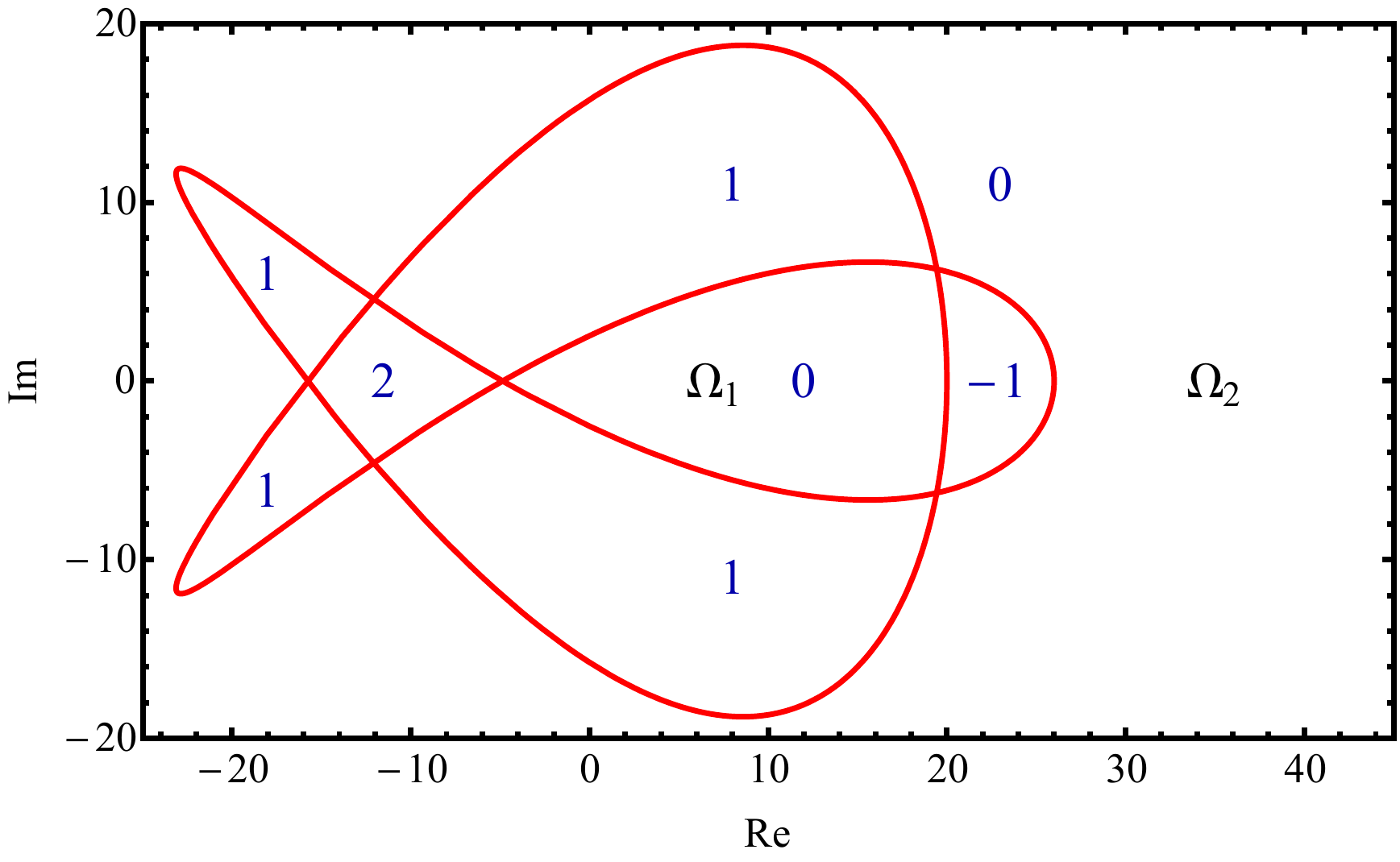}}\\[1mm]
 \includegraphics[width=0.55\textwidth]{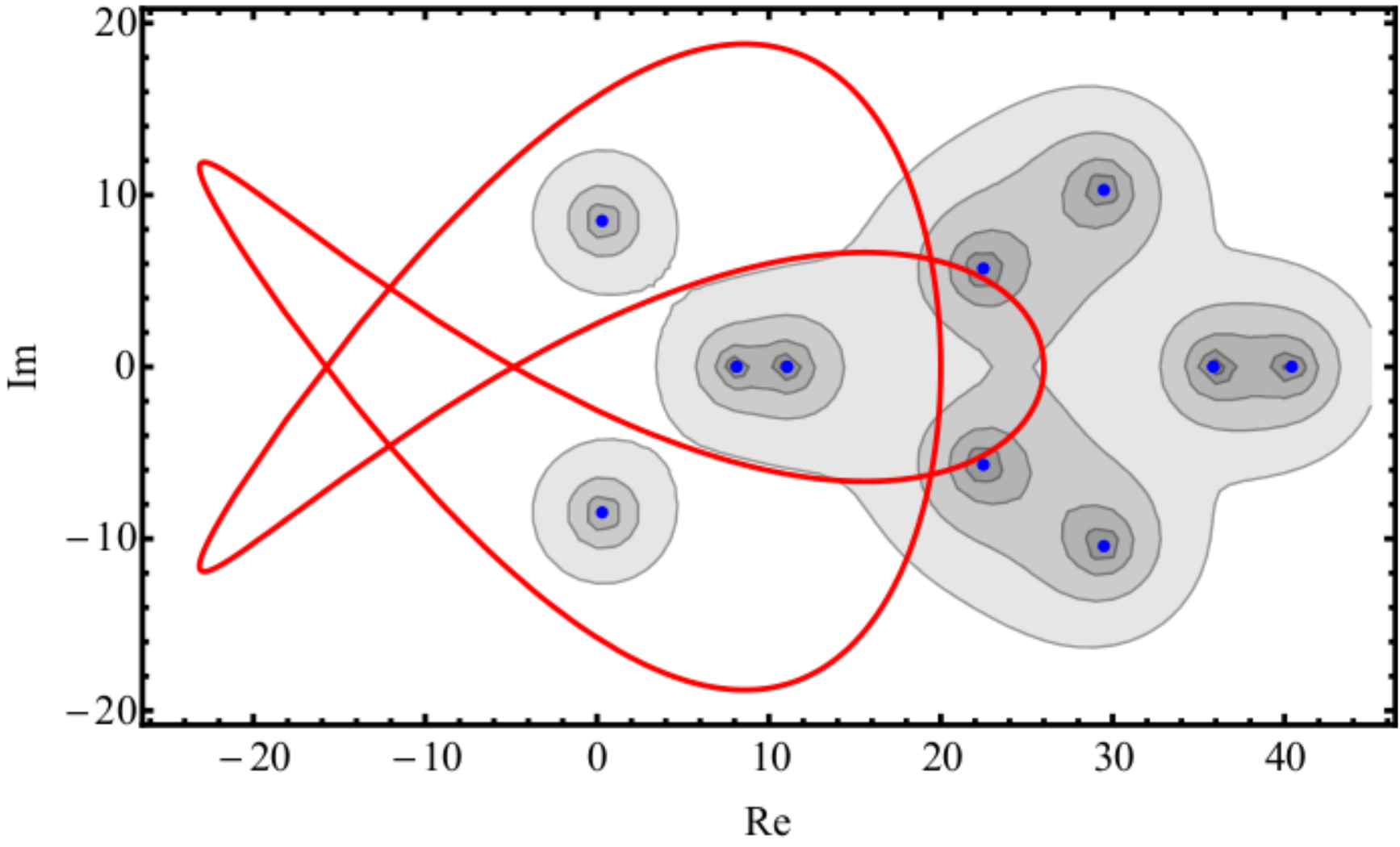}\\
 \includegraphics[width=0.55\textwidth]{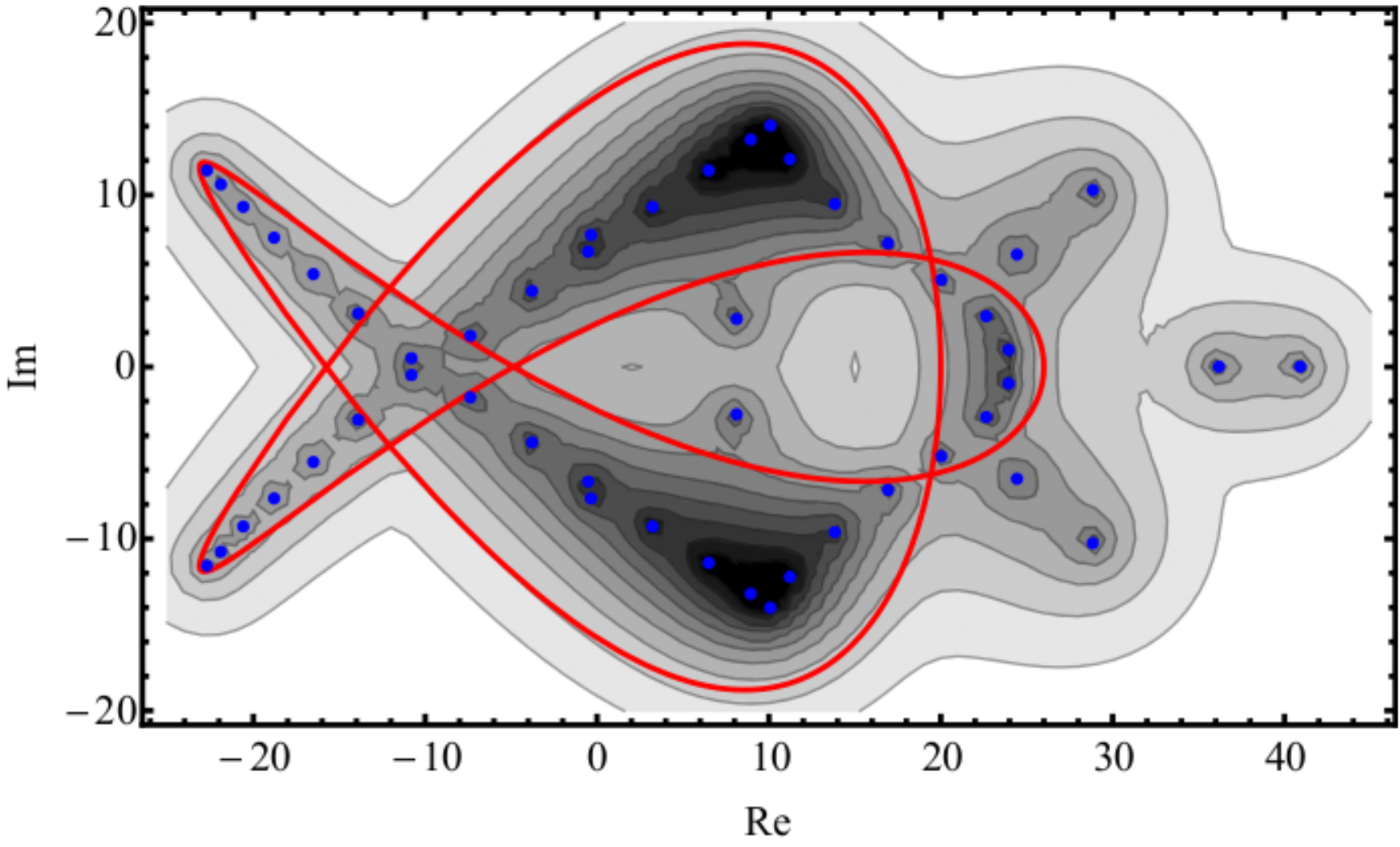}\\
\subfigure[Eigenvalues (blue dots) of $A_n$ for $n=10$ (top), $n=50$ (middle), $n=100$ (bottom) and $\eps$-pseudospectra of $A_n$ for $\eps=2,1,2^{-1},\dots,2^{-5}$.]{
 \includegraphics[width=0.55\textwidth]{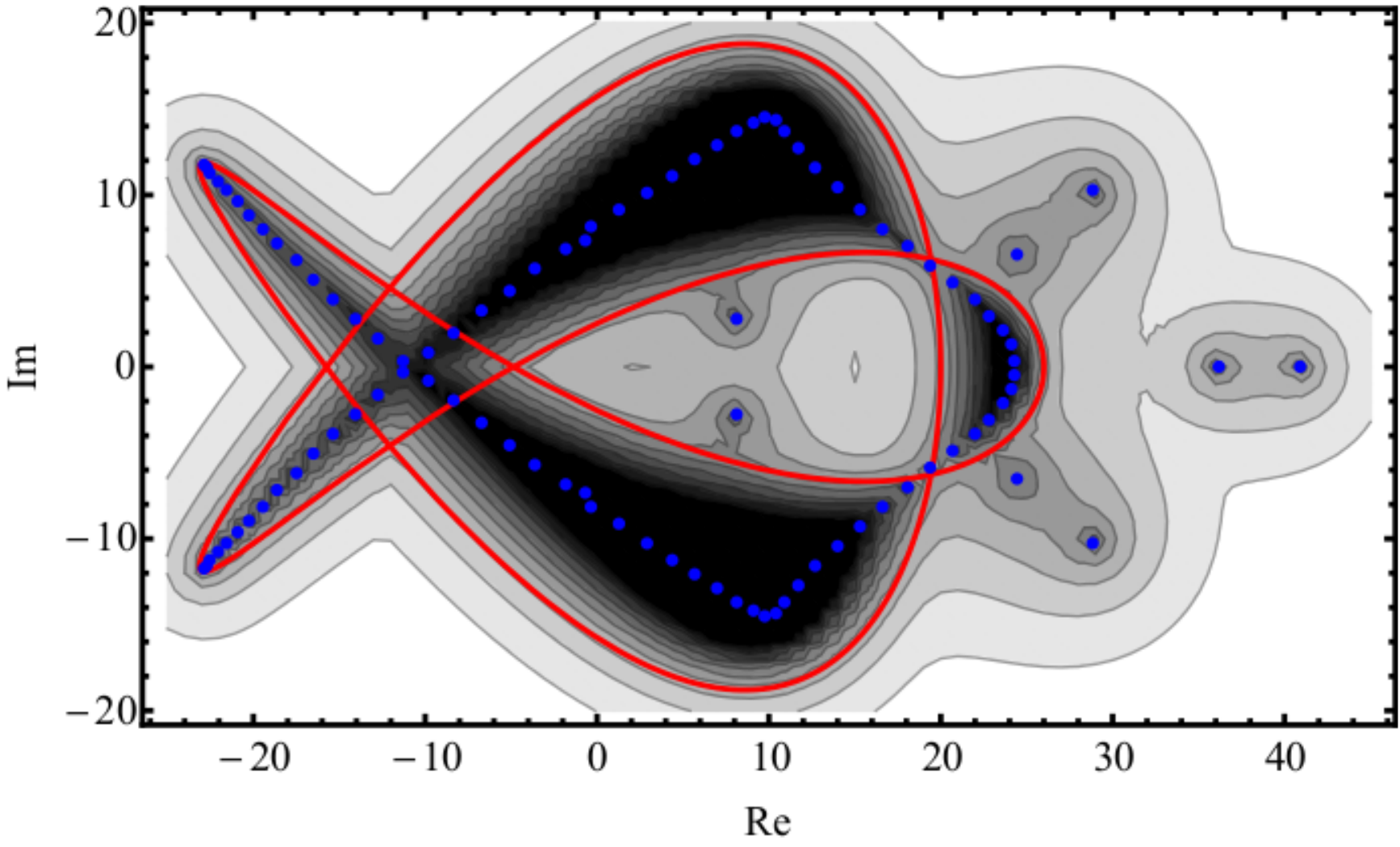}
}
\caption{\small Spectra and pseudospectra of the truncated $n\times n$ matrices $A_n$ of the perturbed Toeplitz operator $A$.}
\label{figsymbolcurvefish}
\end{center}
\end{figure}

\noindent The symbol curve $f(\partial B_1(0))$ is shown in Figure~\ref{figsymbolcurvefish} in red. 
The spectrum of the corresponding Toeplitz operator $T$ consists of the symbol curve together with the connected components with winding numbers $1,2,-1$, see Figure~\ref{figsymbolcurvefish}~(a).
So the resolvent set $\rho(T)$ is the union of the  connected components with winding number~$0$, which are denoted by $\Omega_1$ and $\Omega_2$ in Figure~\ref{figsymbolcurvefish}~(a). 

Now we add the compact operator $S$ with matrix representation $$(S_{ij})_{i,j=1}^{\infty}, \quad S_{ij}:=\begin{cases} 20, & i=j\leq 10,\\ 0 , &\text{otherwise}.\end{cases}$$
By the above claim, the Galerkin approximation $(A_n)_{n\in\N}$ of $A:=T+S$ does not produce spectral pollution, and every accumulation point of $\sigma(A_n)$, $n\in\N$, in $\Omega_1\cup\Omega_2$ belongs to $\sigma(A)$.  Figure~\ref{figsymbolcurvefish}~(b) suggests that two such accumulation points exist in $\Omega_1$ and six in $\Omega_2$.

Note that although the points in $\sigma(T)\subset\sigma(A)$ are not approximated by the Galerkin method, the resolvent norm diverges at these points, see Figure~\ref{figsymbolcurvefish}~(b). This is justified by~\cite[Proposition~4.2]{Boettcher-Wolf-1997}, which implies that for every $\lm\in\sigma(A)$ and every $\eps>0$ there exists $n_{\lm,\eps}\in\N$ with $\lm\in\sigma_{\eps}(A_n)$, $n\geq n_{\lm,\eps}$ (see also Theorem~\ref{thm.inclusion.prelim} below).
\end{example}

\section{Local convergence of  pseudospectra}\label{sectionpseudospectra}
In this section we establish special properties and convergence of pseudospectra.  Subsection~\ref{subsectionmainresults.pseudo} contains the main pseudospectral convergence result (Theorem~\ref{thmexactness2}). We also study the special case of operators having constant resolvent norm on an open set (Theorem~\ref{thmconstresnorm}). 
In Subsection~\ref{subsectionpropertiessigmaess.pseudo} we provide properties of the limiting essential $\eps$-near spectrum $\Lambda_{{\rm ess}, \eps}\big((T_n)_{n\in\N}\big)$ including a perturbation result (Theorem~\ref{prop.pert.pseudo}), followed by Subsection~\ref{subsectionproofofmainresults.pseudo} with the proofs of the results stated in Subsection~\ref{subsectionmainresults.pseudo}.

\subsection{Main convergence result}\label{subsectionmainresults.pseudo}

We fix an $\eps>0$. 

\begin{definition}\label{defepspseudo}
Define the 
\emph{$\eps$-approximate point spectrum} of $T$  by
\begin{align*}
\sigma_{{\rm app},\eps}(T)
&:=\big\{\lm\in\C:\,\exists\, x\in\dom(T)\,\text{with}\,\|x\|=1,\,\|(T-\lm)x\|<\eps\big\}.
\end{align*}
\end{definition}

The following properties are well-known, see for instance \cite[Chapter 4]{Trefethen-2005} and~\cite{Chatelin-Harrabi}.
\begin{lemma}\label{lemma.eps.T}
\begin{enumerate}
\item[\rm i)]
The sets $\sigma_{\eps}(T)$,  $\sigma_{{\rm app},\eps}(T)$ are open  subsets of~$\C$.
\item[\rm ii)]
We have 
\begin{align*}
\sigma_{\eps}(T)&=\sigma_{{\rm app},\eps}(T)\cup\sigma(T)
=\sigma_{{\rm app},\eps}(T)\cup \sigma_{{\rm app},\eps}(T^*)^*,
\end{align*}
and 
$$\sigma_{{\rm app},\eps}(T)\backslash\sigma(T)=\sigma_{{\rm app},\eps}(T^*)^*\backslash\sigma(T).$$
If $T$ has compact resolvent, then 
$$\sigma_{\eps}(T)=\sigma_{{\rm app},\eps}(T)=\sigma_{{\rm app},\eps}(T^*)^*.$$
\item[\rm iii)]
For $\eps>\eps'>0$,
\begin{align*}
\sigma_{\eps'}(T) &\subset \sigma_{\eps}(T), \quad
\underset{\eps>0}{\bigcap}\sigma_{\eps}(T)=\sigma(T).
\end{align*}

\item[\rm iv)]
We have 
\begin{align*}
\big\{\lm\in\C:\,{\rm dist}\big(\lm,\sigma(T)\big)< \eps\big\}&\subset \sigma_{\eps}(T),
\end{align*}
with equality if $T$ is selfadjoint.
\end{enumerate}
\end{lemma}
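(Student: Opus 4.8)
The plan is to derive everything from two elementary facts, applied to $T$ and to $T^*$. The first is the variational description of the resolvent norm,
$$\inf\big\{\|(T-\lm)x\|:\,x\in\dom(T),\,\|x\|=1\big\}=\|(T-\lm)^{-1}\|^{-1},\qquad\lm\in\rho(T),$$
where more generally the left-hand side is zero precisely when $\lm\in\sigma_{\rm app}(T)$; the second is the Neumann-series estimate ${\rm dist}\big(\lm,\sigma(T)\big)\geq\|(T-\lm)^{-1}\|^{-1}$. I would also use repeatedly the adjoint symmetry $\|(T-\lm)^{-1}\|=\|(T^*-\overline\lm)^{-1}\|$ (so that $\sigma_\eps(T)=\sigma_\eps(T^*)^*$) and the trivial inclusion $\sigma_{\rm app}(T)\subset\sigma_{{\rm app},\eps}(T)$, since an approximate eigenvector satisfies $\|(T-\lm)x_n\|\to0<\eps$.

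For i), on the open set $\rho(T)$ the map $\lm\mapsto\|(T-\lm)^{-1}\|$ is continuous (the resolvent is norm-analytic), so $\sigma_\eps(T)\cap\rho(T)=\{\lm\in\rho(T):\|(T-\lm)^{-1}\|>1/\eps\}$ is open in $\C$; and if $\lm\in\sigma(T)$ then the Neumann estimate forces $\|(T-\mu)^{-1}\|\to\infty$ as $\rho(T)\ni\mu\to\lm$, so a full neighbourhood of $\lm$ lies in $\sigma_\eps(T)$. Openness of $\sigma_{{\rm app},\eps}(T)$ is immediate from $\|(T-\mu)x\|\leq\|(T-\lm)x\|+|\lm-\mu|$ for a fixed witness $x$. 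For ii), the variational identity shows that for $\lm\in\rho(T)$ we have $\lm\in\sigma_{{\rm app},\eps}(T)\lrto\|(T-\lm)^{-1}\|>1/\eps\lrto\lm\in\sigma_\eps(T)$ (normalising an approximate minimiser in one direction, and taking $(T-\lm)^{-1}y/\|(T-\lm)^{-1}y\|$ in the other), while $\sigma(T)\subset\sigma_\eps(T)$ by convention; this gives $\sigma_\eps(T)=\sigma_{{\rm app},\eps}(T)\cup\sigma(T)$. Applying the same to $T^*$ and conjugating yields $\sigma_\eps(T)=\sigma_{{\rm app},\eps}(T^*)^*\cup\sigma(T)$, and intersecting the two descriptions with $\rho(T)$ gives the ``$\backslash\sigma(T)$'' identity. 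For the remaining second equality one needs $\sigma(T)\subset\sigma_{{\rm app},\eps}(T)\cup\sigma_{{\rm app},\eps}(T^*)^*$: if $\lm\in\sigma(T)$ then $T-\lm$ is either not bounded below, so $\lm\in\sigma_{\rm app}(T)$, or it is bounded below (hence has closed range) but not surjective, so its range is not dense and $\overline\lm\in\sigma_p(T^*)\subset\sigma_{\rm app}(T^*)$. When $T$ has compact resolvent so does $T^*$, and then $\sigma(T)=\sigma_p(T)\subset\sigma_{{\rm app},\eps}(T)$ (similarly for $T^*$), whence all three sets coincide.

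For iii), $\eps'<\eps$ gives $1/\eps'>1/\eps$, so $\sigma_{\eps'}(T)\subset\sigma_\eps(T)$; and $\bigcap_{\eps>0}\sigma_\eps(T)=\sigma(T)$ because $\|(T-\lm)^{-1}\|<\infty$ precisely for $\lm\in\rho(T)$, so choosing $\eps<\|(T-\lm)^{-1}\|^{-1}$ removes any such $\lm$. For iv), the Neumann estimate yields $\|(T-\lm)^{-1}\|\geq1/{\rm dist}(\lm,\sigma(T))>1/\eps$ whenever ${\rm dist}(\lm,\sigma(T))<\eps$, hence the claimed inclusion; and if $T=T^*$ the spectral theorem gives the exact equality $\|(T-\lm)^{-1}\|=1/{\rm dist}(\lm,\sigma(T))$ for $\lm\in\rho(T)$, which upgrades the inclusion to an identity. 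The only step that is not pure bookkeeping is the dichotomy in ii) showing that the residual part of $\sigma(T)$ is absorbed into $\sigma_{\rm app}(T^*)^*$; everything else is a direct unwinding of the definitions together with the two elementary facts above.
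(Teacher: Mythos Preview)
Your argument is correct in all four parts. The paper itself does not prove this lemma at all: it is stated as well-known and the reader is referred to \cite[Chapter 4]{Trefethen-2005} and \cite{Chatelin-Harrabi}, so there is nothing to compare your approach against beyond confirming that your proof supplies precisely the details those references contain. The one place worth a word of care is the second equality in ii): your dichotomy (either $T-\lm$ fails to be bounded below, or it is bounded below with closed range and nontrivial cokernel, whence $\overline\lm\in\sigma_p(T^*)$) is exactly the right way to absorb the residual spectrum, and it goes through verbatim for closed densely defined $T$ in a Hilbert space, which is the standing hypothesis of the paper.
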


In contrast to the spectrum, the $\eps$-pseudospectrum is always approximated under generalised strong resolvent convergence.
For bounded operators and strong convergence, this was proved by B\"ottcher-Wolf in~\cite[Proposition~4.2]{Boettcher-Wolf-1997} (where non-strict inequality in the definition of pseudospectra is used);
claim~i) is not explicitly stated but can be read off from the proof.
Note that if $T$ has compact resolvent, then claim i) holds for all $\lm\in\sigma_{\eps}(T)=\sigma_{{\rm app},\eps}(T)$ by Lemma~\ref{lemma.eps.T}~ii).

\begin{theorem}\label{thm.inclusion.prelim}
Suppose that $T_n\gsr T$. 
\begin{enumerate}
\item[\rm i)]
For every $\lm\in\sigma_{{\rm app},\eps}(T)$ and $x\in\dom(T)$ with $\|x\|=1$, $\|(T-\lm)x\|<\eps$ there exist $n_{\lm}\in\N$ and $x_n\in\dom(T_n)$, $n\geq n_{\lm}$, with
$$\lm\in\sigma_{{\rm app},\eps}(T_n),\quad \|x_n\|=1,\quad \|(T_n-\lm)x_n\|<\eps, \quad n\geq n_{\lm},$$
and $\|x_n-x\|\to 0$ as $n\to\infty$.
\item[\rm ii)]
Suppose that, in addition, $T_n^*\gsr T^*$. Then for every $\lm\in\sigma_{\eps}(T)$ there exists $n_{\lm}\in\N$ such that $\lm\in\sigma_{\eps}(T_n)$, $n\geq n_{\lm}$. 
\end{enumerate}
\end{theorem}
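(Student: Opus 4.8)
The plan is to prove claim~i) directly from the definition of generalised strong resolvent convergence, then derive claim~ii) by combining~i) applied to $T_n$ and to $T_n^*$ together with the decomposition of the pseudospectrum in Lemma~\ref{lemma.eps.T}~ii). For~i), fix $\lm\in\sigma_{{\rm app},\eps}(T)$ and a witness $x\in\dom(T)$ with $\|x\|=1$ and $\|(T-\lm)x\|<\eps$. The natural first step is to approximate $x$ by elements of $\dom(T_n)$: since $T_n\gsr T$, Lemma~\ref{AndAJndJnew} supplies $y_n\in\dom(T_n)$ with $\|y_n\|=1$, $\|y_n-x\|\to 0$ and $\|T_ny_n-Tx\|\to 0$ as $n\to\infty$. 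Then
$$\|(T_n-\lm)y_n\|\leq \|T_ny_n-Tx\|+|\lm|\,\|y_n-x\|+\|(T-\lm)x\|,$$
and since the last term is strictly below $\eps$ while the first two tend to $0$, there is $n_\lm\in\N$ with $\|(T_n-\lm)y_n\|<\eps$ for all $n\geq n_\lm$. Setting $x_n:=y_n$ for $n\geq n_\lm$ (and anything normalised for $n<n_\lm$) gives $\lm\in\sigma_{{\rm app},\eps}(T_n)$, $\|x_n\|=1$, $\|(T_n-\lm)x_n\|<\eps$ and $\|x_n-x\|\to 0$, which is exactly claim~i).

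For claim~ii), let $\lm\in\sigma_\eps(T)$. By Lemma~\ref{lemma.eps.T}~ii) we have $\sigma_\eps(T)=\sigma_{{\rm app},\eps}(T)\cup\sigma_{{\rm app},\eps}(T^*)^*$, so either $\lm\in\sigma_{{\rm app},\eps}(T)$ or $\overline\lm\in\sigma_{{\rm app},\eps}(T^*)$. In the first case, claim~i) gives $n_\lm$ with $\lm\in\sigma_{{\rm app},\eps}(T_n)\subset\sigma_\eps(T_n)$ for $n\geq n_\lm$, using the inclusion $\sigma_{{\rm app},\eps}(T_n)\subset\sigma_\eps(T_n)$ from Lemma~\ref{lemma.eps.T}~ii) applied to $T_n$. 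In the second case, we apply claim~i) to the sequence $T_n^*\gsr T^*$ (which holds by the additional hypothesis) at the point $\overline\lm$, obtaining $n_\lm$ with $\overline\lm\in\sigma_{{\rm app},\eps}(T_n^*)$ for $n\geq n_\lm$; then $\lm\in\sigma_{{\rm app},\eps}(T_n^*)^*\subset\sigma_\eps(T_n)$, again by Lemma~\ref{lemma.eps.T}~ii). Either way $\lm\in\sigma_\eps(T_n)$ for all $n\geq n_\lm$.

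The only genuinely delicate point is the construction of the approximating vectors $y_n$ underlying Lemma~\ref{AndAJndJnew}: one must pass a fixed element of $\dom(T)$ to the moving domains $\dom(T_n)$ while controlling both $\|y_n-x\|$ and $\|T_ny_n-Tx\|$, which is where the choice $y_n=(T_n-\lm_0)^{-1}P_n(T-\lm_0)x$ and the strong convergence of the resolvents (plus $P_n\s P$) enter. Since that lemma is already established in the excerpt, the remainder is the routine triangle-inequality estimate above, and there is no serious obstacle; the main subtlety to keep track of is merely that the inequality defining $\sigma_{{\rm app},\eps}$ is strict, so the perturbation of size $o(1)$ does not break it for $n$ large.
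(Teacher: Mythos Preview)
Your proof is correct and follows essentially the same route as the paper: for~i) you invoke Lemma~\ref{AndAJndJnew} and apply the identical triangle-inequality estimate, and for~ii) you use the decomposition $\sigma_{\eps}(T)=\sigma_{{\rm app},\eps}(T)\cup\sigma_{{\rm app},\eps}(T^*)^*$ from Lemma~\ref{lemma.eps.T}~ii) and apply~i) to $T_n$ or $T_n^*$ accordingly. The only difference is that you spell out the two cases in~ii) more explicitly than the paper does.
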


The following example illustrates that we cannot omit the additional assumption  $T_n^*\gsr T^*$ in Theorem~\ref{thm.inclusion.prelim}~ii).
In particular, this is a counterexample for~\cite[Theorem~4.4]{Brown-Marletta-2001} where only $T_n\gsr T$ is assumed.

\begin{example}\label{ex.firstderivative}
Let $T$ be the first derivative in $L^2(0,\infty)$ with Dirichlet boundary condition,
$$Tf:=f', \quad \dom(T):=\{f\in W^{1,2}(0,\infty):\,f(0)=0\}.$$
We approximate $T$ by a sequence of operators $T_n$ in $L^2(0,n)$, $n\in\N$, defined by
$$T_nf:=f', \quad \dom(T_n):=\{f\in W^{1,2}(0,n):\,f(0)=f(n)\}.$$
Note that $\{\lm\in\C:\,\re\,\lm\geq 0\}=\sigma(T)\subset\sigma_{\eps}(T).$
The operators $\I\,T_n$, $n\in\N$, are selfadjoint. Hence 
$$\{\lm\in\C:\,\re\,\lm<0\}\subset\Delta_b\big((T_n)_{n\in\N}\big)\cap\rho(T).$$
Using that $\{f\in\dom(T):\,\supp f\, \text{compact}\}$ is a core of $T$, \cite[Theorem~3.1]{Boegli-chap1} implies that $T_n\gsr T$.
However, since $\I\,T$ is not selfadjoint, we obtain $T_n^*=-T_n \gsr - T\neq  T^*$.
The selfadjointness of $\I\,T_n$ and Lemma~\ref{lemma.eps.T}~iv) imply$$\sigma_{\eps}(T_n)=\{\lm\in\C:\,{\rm dist}(\lm,\sigma(T_n))<\eps\}\subset\{\lm\in\C:\,|\re\,\lm|<\eps\}, \quad n\in\N.$$
Therefore, for every $\lm\in\C$ with $\re\,\lm>\eps$, we conclude $\lm\in\sigma_{\eps}(T)$ but
$${\rm dist}(\lm,\sigma_{\eps}(T_n))\geq \re\,\lm-\eps>0, \quad n\in\N.$$
\end{example}

In order to characterise $\eps$-pseudospectral pollution, we introduce the following sets.

\begin{definition}\label{def.limiting.eps}
Define the
\emph{essential $\eps$-near spectrum} of $T$ by
\begin{align*}
\Lambda_{{\rm ess},\eps}(T)
&:=\left\{\lm\in\C:\,\exists\, x_n\in\dom(T),\,n\in\N,\,\text{with}\,\begin{array}{l}\|x_n\|=1,\,x_n\stackrel{w}{\to}0,\\
\|(T-\lm)x_n\|\to \eps\end{array}\right\},
\end{align*}
and the 
 \emph{limiting essential $\eps$-near spectrum} of $(T_n)_{n\in\N}$ by
\begin{align*}
\Lambda_{{\rm ess}, \eps}\big((T_n)_{n\in\N}\big):=\left\{\lm\in\C:\,
\begin{array}{l} \exists\,I\subset\N\,\exists\,x_n\in\dom(T_n),\,n\in I,\,\text{with}\\
\|x_n\|=1,\,x_n\stackrel{w}{\to}0,\,\|(T_n-\lm)x_n\|\to \eps
\end{array}
\right\}.
\end{align*}
\end{definition}

The following theorem is the main result of this section. We establish local $\eps$-pseudospectral exactness and prove $\eps$-pseudospectral convergence with respect to the Hausdorff metric in compact subsets of the complex plane where we have $\eps$-pseudospectral exactness.

\begin{theorem}\label{thmexactness2}
Suppose that $T_n\gsr T$ and $T_n^*\gsr T^*$.
\begin{enumerate}
\item[\rm i)] 
The sequence $(T_n)_{n\in\N}$ is an $\eps$-pseudospectrally inclusive approximation of~$T$. 

\item[\rm ii)]
Define \vspace{-1mm}
$$\Lambda_{{\rm ess},(0,\eps]}:=\underset{\delta\in (0,\eps]}{\bigcup}\Big(\Lambda_{{\rm ess},\delta}\big((T_n)_{n\in\N}\big)\cap\Lambda_{{\rm ess},\delta}\big((T_n^*)_{n\in\N}\big)^*\Big).$$
Then $\eps$-pseudospectral pollution is confined to 
\beq\label{eq.defKgeneral}
\sigma_{\rm ess}\big((T_n)_{n\in\N}\big)\cup \sigma_{\rm ess}\big((T_n^*)_{n\in\N}\big)^*\cup \Lambda_{{\rm ess},(0,\eps]};
\eeq
if the operators $T_n$, $n\in\N$, all have compact resolvents, then it is restricted to 
\beq \label{eq.defK2}
\big(\sigma_{\rm ess}\big((T_n)_{n\in\N}\big)\cap \sigma_{\rm ess}\big((T_n^*)_{n\in\N}\big)^*\big)\cup \Lambda_{{\rm ess},(0,\eps]}.
\eeq

\item[\rm iii)]
Let $K\subset \C$ be a compact subset with
$$\overline{\sigma_{\eps}(T)}\cap K=\overline{\sigma_{\eps}(T)\cap K}\neq \emptyset.$$
If  the intersection of $K$ with the set in~\eqref{eq.defKgeneral} or~\eqref{eq.defK2}, respectively, is contained in $\overline{\sigma_{\eps}(T)}$, then
$${\rm d_H}\big(\overline{\sigma_{\eps}(T_n)}\cap K,\overline{\sigma_{\eps}(T)}\cap K\big)\tolong 0, \quad n\to\infty.$$
\end{enumerate}
\end{theorem}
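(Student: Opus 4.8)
The plan is to derive all three claims of Theorem~\ref{thmexactness2} from the preliminary inclusion result (Theorem~\ref{thm.inclusion.prelim}) together with a ``no-pollution'' lemma whose proof parallels the spectral case (Theorem~\ref{mainthmspectralexactness}), and then to obtain the Hausdorff convergence in~iii) by exactly the same compactness/contradiction argument used in the proof of Theorem~\ref{mainthmspectralexactness}~iii). I would structure the proof around these three pieces.

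First, claim~i): $\eps$-pseudospectral inclusion. Let $\lm\in\sigma_{\eps}(T)$; since $\sigma_{\eps}(T)$ is open (Lemma~\ref{lemma.eps.T}~i)), it suffices by Theorem~\ref{thm.inclusion.prelim}~ii) to note that for each such $\lm$ there is $n_{\lm}$ with $\lm\in\sigma_{\eps}(T_n)$ for $n\geq n_{\lm}$; combining this with openness gives, for any $\lm\in\overline{\sigma_{\eps}(T)}$, a sequence $\lm_n\in\sigma_{\eps}(T_n)$ with $\lm_n\to\lm$. (If $\lm\in\sigma_{\eps}(T)$ take $\lm_n=\lm$ eventually; if $\lm$ is only a boundary point, approximate by interior points and diagonalise.) This is the easy half.

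Second, claim~ii): confinement of pollution. Suppose $\lm_n\in\overline{\sigma_{\eps}(T_n)}$, $n\in I$, with $\lm_n\to\lm$ and $\lm\notin\overline{\sigma_{\eps}(T)}$; I must show $\lm$ lies in the set~\eqref{eq.defKgeneral} (resp.~\eqref{eq.defK2}). Passing to a subsequence I may assume $\lm_n\in\sigma_{\eps}(T_n)$ or, taking closures into account, that there are unit vectors $x_n\in\dom(T_n)$ with $\|(T_n-\lm_n)x_n\|\leq \eps+o(1)$, hence $\|(T_n-\lm)x_n\|\leq\eps+o(1)$. Extract a weak limit $x_n\stackrel{w}{\to}x$ along a further subsequence. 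If $x=0$, then $\liminf\|(T_n-\lm)x_n\|\geq$ some $\delta\in(0,\eps]$ (it is bounded below away from $0$ because otherwise $\lm\in\sigma_{\rm ess}((T_n)_{n\in\N})$, which is one of the allowed sets), and after rescaling the perturbation parameter one shows $\lm\in\Lambda_{{\rm ess},\delta}((T_n)_{n\in\N})$; a parallel argument applied to $T_n^*$ (using $T_n^*\gsr T^*$ and the symmetric characterisation $\sigma_{\eps}(T)=\sigma_{{\rm app},\eps}(T^*)^*\cup\cdots$ from Lemma~\ref{lemma.eps.T}~ii)) places $\lm$ in $\Lambda_{{\rm ess},\delta}((T_n^*)_{n\in\N})^*$ as well, so $\lm\in\Lambda_{{\rm ess},(0,\eps]}$. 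If $x\neq0$, then Lemma~\ref{AndAJndJnew}-type discrete convergence plus the resolvent convergence forces $x\in\dom(T)$ with $\|(T-\lm)x\|\leq\eps\|x\|$, i.e.\ $\lm\in\overline{\sigma_{\eps}(T)}$ (using $\sigma_{{\rm app},\eps}$ and taking closure), contradicting the assumption. The compact-resolvent refinement~\eqref{eq.defK2} comes from the fact that, in that case, $\sigma_{\rm ess}((T_n)_{n\in\N})$ and $\sigma_{\rm ess}((T_n^*)_{n\in\N})^*$ can only contribute through their intersection, exactly as in Theorem~\ref{mainthmspectralexactness}~ii) via the compact-resolvent part of the auxiliary results; I would invoke the corresponding statements (analogues of Proposition~\ref{thmregionofbddvsseqessspectrum}~ii)) rather than redo them.

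Third, claim~iii): Hausdorff convergence. This is now a formal consequence of i) and ii) together with compactness of $K$, and the argument is verbatim the one in the proof of Theorem~\ref{mainthmspectralexactness}~iii): assume the conclusion fails, so there is $\alpha>0$, an infinite $I$, and $\lm_n\in K$ with either (1) $\lm_n\in\overline{\sigma_{\eps}(T_n)}$ and ${\rm dist}(\lm_n,\overline{\sigma_{\eps}(T)}\cap K)>\alpha$, or (2) $\lm_n\in\overline{\sigma_{\eps}(T)}$ and ${\rm dist}(\lm_n,\overline{\sigma_{\eps}(T_n)}\cap K)>\alpha$; pass to a convergent subsequence $\lm_n\to\lm\in K$. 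In case~(1), no pollution in $K$ (guaranteed by the hypothesis that $K$ meets the bad set only inside $\overline{\sigma_{\eps}(T)}$, combined with ii)) gives $\lm\in\overline{\sigma_{\eps}(T)}\cap K$, contradicting the distance bound. In case~(2), the hypothesis $\overline{\sigma_{\eps}(T)}\cap K=\overline{\sigma_{\eps}(T)\cap K}$ lets me approximate $\lm$ by interior points of $\sigma_{\eps}(T)\cap K$, which by i) are limits of points of $\sigma_{\eps}(T_n)\subset\overline{\sigma_{\eps}(T_n)}$ lying in $K$ for large $n$ (using that they lie in the interior of $K$, as in the spectral proof), again contradicting the distance bound.

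The main obstacle is the $x=0$ alternative in claim~ii): one must show that when a would-be polluting quasimode goes weakly to zero, the limiting value $\delta:=\lim\|(T_n-\lm)x_n\|$ of its defect lies in $(0,\eps]$ and is realised \emph{simultaneously} as an essential $\delta$-near value for both $(T_n)$ and $(T_n^*)^*$ — the latter requires transporting the quasimode to the adjoint side via the resolvent, which is where $T_n^*\gsr T^*$ is essential (cf.\ Example~\ref{ex.firstderivative}), and care is needed because $\|(T-\lm)^{-1}\|=\|(T^*-\bar\lm)^{-1}\|$ only controls the norm, not the approximate-eigenvector structure, so one works with $(T_n-\lm_0)^{-1}$ and its adjoint as in the proof of Proposition~\ref{lemmasigmaappofTn}~ii). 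The bookkeeping of closures (passing between $\sigma_{\eps}$, $\sigma_{{\rm app},\eps}$, and their closures) is routine but must be done carefully using Lemma~\ref{lemma.eps.T}.
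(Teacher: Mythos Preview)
Your treatment of i) and iii) matches the paper's. The substantive divergence, and the gap, is in ii).

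You propose a direct quasimode argument: pick unit vectors $x_n\in\dom(T_n)$ with $\|(T_n-\lm)x_n\|\leq\eps+o(1)$, pass to a weak limit $x$, and split into $x=0$ versus $x\neq0$. Both branches are incomplete. In the $x\neq0$ branch, weak lower semicontinuity gives only $\|(T-\lm)x\|\leq\delta$ together with $\|x\|\leq1$, and the norm drop $\|x\|<1$ may exactly compensate so that $\|(T-\lm)x\|/\|x\|=\eps$; you do not get $\lm\in\overline{\sigma_{\eps}(T)}$ without an extra argument handling the level set $\{\|(T-\cdot)^{-1}\|=1/\eps\}$. More seriously, in the $x=0$ branch your sketch produces $\lm\in\Lambda_{{\rm ess},\delta}\big((T_n)_{n\in\N}\big)$ for some $\delta\in(0,\eps]$, and then says a ``parallel argument applied to $T_n^*$'' yields $\lm\in\Lambda_{{\rm ess},\delta}\big((T_n^*)_{n\in\N}\big)^*$. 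But running the argument independently for $T_n^*$ produces its own weakly convergent sequence with its own limit value $\delta'$, and there is no reason that $\delta'=\delta$; yet the set $\Lambda_{{\rm ess},(0,\eps]}$ is defined as a union over $\delta$ of \emph{intersections}, so the same $\delta$ is required. The reference to Proposition~\ref{lemmasigmaappofTn}~ii) does not help here, since that argument is specific to the case $\|(T_n^*-\overline\lm)x_n\|\to0$ and exploits the eigenvalue equation rather than a quasi-eigenvalue inequality.

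The paper resolves both issues simultaneously by passing to the resolvent norm: it sets $M:=\limsup_n\|(T_n-\lm)^{-1}\|\geq1/\eps$ and studies the selfadjoint operators $(T_n^*-\overline\lm)^{-1}(T_n-\lm)^{-1}$, whose operator norm equals $\|(T_n-\lm)^{-1}\|^2$. This yields $M^2\in\sigma_{\rm app}$ of the product sequence, and Proposition~\ref{lemmasigmaappofTn}~ii) splits this into either $M^2\in\sigma_p\big((T^*-\overline\lm)^{-1}(T-\lm)^{-1}\big)$, forcing $\|(T-\lm)^{-1}\|\geq M$, or $M^2$ in the limiting essential spectrum of the product. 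In the second case the mapping theorem (Theorem~\ref{mappingthm}) gives $1/M^2\in\sigma_{\rm ess}\big(((T_n-\lm)(T_n^*-\overline\lm))_{n\in\N}\big)$, and taking inner products with the singular sequence yields $\|(T_n^*-\overline\lm)x_n\|\to1/M$; by symmetry of the construction one gets the same $1/M$ for the $T_n$ side. This is precisely the device (packaged as Proposition~\ref{thmexactness2.prelim}) that synchronises the two $\delta$'s. The paper then handles the residual case $\|(T-\lm)^{-1}\|=1/\eps$ by invoking Theorem~\ref{thmconstresnorm}~ii): since $\lm\notin\Lambda_{{\rm ess},(0,\eps]}$, the resolvent norm cannot be constant on an open set, so the level set has empty interior and $\lm\in\overline{\sigma_{\eps}(T)}$ after all. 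Your proposal is missing both the selfadjoint-product trick and this level-set step.
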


\begin{rem}
If we compare Theorem~\ref{thmexactness2}~iii) with \cite[Theorem~2.1]{Boegli-2014-80} for generalised \emph{norm} resolvent convergence, note that here we do no explicitly exclude the possibility that $\lm\mapsto\|(T-\lm)^{-1}\|$ is constant on an open subset $\emptyset\neq U\subset\rho(T)$. However,  if the resolvent norm is equal to $1/\eps$ on an open set $U$, then  $U\cap\overline{\sigma_{\eps}(T)}=\emptyset$ and hence the following Theorem~\ref{thmconstresnorm}~ii) implies that a compact set $K$ with $K\cap\Lambda_{{\rm ess},\eps}\big((T_n)_{n\in\N}\big)\subset\overline{\sigma_{\eps}(T)}$ satisfies $K\cap U=\emptyset$.
So we implicitly exclude the problematic region $U$.
\end{rem}

In the following result we study operators that have constant resolvent norm on an open set.
For the existence of such operators see~\cite{Shargorodsky-2008-40,Boegli-2014-80}.

\begin{theorem}\label{thmconstresnorm}
Assume that there exists an open subset $\emptyset\neq U\subset\rho(T)$ such that
$$\|(T-\lm)^{-1}\|=\frac{1}{\eps},\quad \lm\in U.$$
\begin{enumerate}
\item[\rm i)]
We have
\begin{align*}
 \rho(T)\subset \C\backslash \underset{K \text{ compact}}{\bigcap}\sigma(T+K)\subset  \Lambda_{{\rm ess},\eps}(T)\cap \Lambda_{{\rm ess},\eps}(T^*)^*.
\end{align*}

\item[\rm ii)]
If $T_n\gsr T$ and $T_n^*\gsr T^*$, then 
$$\rho(T)\subset \C\backslash \underset{K \text{ compact}}{\bigcap}\sigma(T+K)\subset \Lambda_{{\rm ess},\eps}\big((T_n)_{n\in\N}\big)\cap \Lambda_{{\rm ess},\eps}\big((T_n^*)_{n\in\N}\big)^*.$$
\end{enumerate}
\end{theorem}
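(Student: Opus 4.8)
\textbf{Plan for proving Theorem~\ref{thmconstresnorm}.}
The core of the matter is part~i), since part~ii) will then follow from part~i) together with the inclusion $\Lambda_{{\rm ess},\eps}(T)\subset\Lambda_{{\rm ess},\eps}\big((T_n)_{n\in\N}\big)$, which is the $\eps$-near-spectrum analogue of Proposition~\ref{propsigmaess}~i) and presumably has been (or will be) recorded in Subsection~\ref{subsectionpropertiessigmaess.pseudo}; one invokes it for $T$ and for $T^*$ (using $T_n^*\gsr T^*$) and intersects. So I focus on i). The first inclusion $\rho(T)\subset\C\setminus\bigcap_{K\text{ compact}}\sigma(T+K)$ is nearly trivial: taking $K=0$ shows $\bigcap_K\sigma(T+K)\subset\sigma(T)$, hence its complement contains $\rho(T)$. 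The real content is the second inclusion.

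The strategy for the second inclusion is a \emph{unique continuation / maximum principle} argument for the subharmonic-type function $\lm\mapsto\|(T-\lm)^{-1}\|$ combined with a compact-perturbation argument. Fix $\lm\notin\bigcap_{K\text{ compact}}\sigma(T+K)$, so there is a compact $K$ with $\lm\in\rho(T+K)$. The aim is to produce a singular (Weyl) sequence $x_n\in\dom(T)$, $\|x_n\|=1$, $x_n\stackrel{w}{\to}0$, $\|(T-\lm)x_n\|\to\eps$, and symmetrically for $T^*$. The key structural input is the hypothesis that $\|(T-z)^{-1}\|\equiv 1/\eps$ on the open set $U\subset\rho(T)$. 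By the analyticity of $z\mapsto(T-z)^{-1}$ on $\rho(T)$ and the fact that $z\mapsto\log\|(T-z)^{-1}\|$ is subharmonic there, constancy on $U$ forces, by the maximum principle for subharmonic functions, $\|(T-z)^{-1}\|\le 1/\eps$, hence (since $1/\eps=\sup_U$ cannot be exceeded and the function is $\ge$ anything approached from $U$ on the connected component) $\|(T-z)^{-1}\|= 1/\eps$ on the whole connected component of $\rho(T)$ meeting $U$; more usefully, one gets that the resolvent norm is bounded by $1/\eps$ on that component, so $\rho(T)\cap(\text{that component})\subset\partial\sigma_\eps(T)\subset\overline{\sigma_\eps(T)}$, i.e. $\|(T-z)^{-1}\|\ge 1/\eps$ there (indeed $=1/\eps$). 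For the near-spectrum one needs the \emph{approximate}-spectral characterisation: $\lm\in\Lambda_{{\rm ess},\eps}(T)$ iff there is a weakly null normalised sequence with $\|(T-\lm)x_n\|\to\eps$. That $\|(T-\lm)^{-1}\|=1/\eps$ gives a normalised sequence with $\|(T-\lm)x_n\|\to\eps$; weak nullity must be extracted, and this is where the compact perturbation enters: since $\lm\in\rho(T+K)$ with $K$ compact, the operator $T-\lm$ is semi-Fredholm of index $0$ (a bounded-below-modulo-compact statement), so any sequence $y_n=(T-\lm)x_n\to 0$ weakly after passing to a subsequence would force $x_n$ to have a strongly convergent subsequence, contradicting normalisation unless the limit is a genuine eigenfunction — but here $\|(T-\lm)x_n\|\to\eps\neq 0$, so one argues that $x_n\stackrel{w}{\to}0$ can be arranged (a Fredholm / finite-rank splitting of $x_n$).

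The key steps in order: (1) reduce ii) to i) via monotonicity of $\Lambda_{{\rm ess},\delta}$ under $\gsr$, citing the pseudospectral analogue of Proposition~\ref{propsigmaess}; (2) dispatch the first inclusion in i) by choosing $K=0$; (3) for the second inclusion, fix $\lm$ outside $\bigcap_K\sigma(T+K)$, pick a compact $K$ with $\lm\in\rho(T+K)$; (4) use subharmonicity of $\log\|(T-z)^{-1}\|$ and constancy on $U$ to conclude $\|(T-\lm)^{-1}\|\ge 1/\eps$, equivalently produce a normalised sequence $(x_n)$ with $\|(T-\lm)x_n\|\to\eps$; (5) use the Fredholm-index-zero property of $T-\lm$ (from $\lm\in\rho(T+K)$, $K$ compact) to modify $(x_n)$ into a \emph{weakly null} sequence while preserving $\|x_n\|=1$ and $\|(T-\lm)x_n\|\to\eps$; (6) repeat with $T^*$ in place of $T$ (noting $\rho(T+K)=\rho((T+K)^*)^*$ and $\|((T+K)-\lm)^{-1}\|=\|((T+K)^*-\overline\lm)^{-1}\|$, so the constant-resolvent-norm hypothesis transfers to $T^*$ on $U^*$), obtaining $\lm\in\Lambda_{{\rm ess},\eps}(T^*)^*$; (7) intersect.

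\textbf{Main obstacle.} The delicate point is step~(5): extracting weak nullity. One has a normalised sequence $x_n$ with $(T-\lm)x_n$ of norm tending to $\eps$; passing to a subsequence, $x_n\stackrel{w}{\to}x$ for some $x$. If $x\neq 0$ one must subtract off its contribution without destroying the norm bound or the limit $\eps$ — this requires knowing the sequence does not concentrate on a fixed finite-dimensional subspace, which is exactly what the semi-Fredholm structure of $T-\lm$ (guaranteed by $\lm\in\rho(T+K)$ with $K$ compact, so $T-\lm=(T+K-\lm)-K$ is Fredholm of index $0$) provides: $\ker(T-\lm)$ is finite-dimensional and $\ran(T-\lm)$ is closed of finite codimension, so the part of $x_n$ "seen" by $T-\lm$ cannot all escape to zero, and one can replace $x_n$ by a normalised sequence in a cofinite-dimensional subspace, which is automatically weakly null along a further subsequence. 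Making this rigorous — in particular checking that the modification keeps $\|(T-\lm)x_n\|\to\eps$ and does not accidentally drive it to $0$ (which would only land $\lm$ in $\sigma_{\rm ess}(T)$, not $\Lambda_{{\rm ess},\eps}(T)$) — is the technical heart, and is presumably handled by a lemma on $\Lambda_{{\rm ess},\eps}$ proved earlier in Subsection~\ref{subsectionpropertiessigmaess.pseudo} characterising it via semi-Fredholm-type conditions; I would quote that lemma rather than redo the extraction by hand.
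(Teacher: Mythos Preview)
Your reduction of ii) to i) via Proposition~\ref{proplimsigmaess}~i) and the trivial first inclusion (take $K=0$) are correct. The second inclusion in~i), however, does not go through as you outline, and the gap is twofold.

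First, step~(4) is wrong: subharmonicity of $z\mapsto\log\|(T-z)^{-1}\|$ together with constancy on $U$ does \emph{not} force $\|(T-\lm)^{-1}\|=1/\eps$ on the whole connected component of $\rho(T)$ containing $U$. The strong maximum principle requires that the supremum over the component be attained at an interior point, but the resolvent norm blows up as one approaches $\sigma(T)$, so $1/\eps$ is certainly not that supremum; indeed, in Shargorodsky's examples the norm equals $1/\eps$ only on a proper subregion of $\rho(T)$. And even if the propagation worked, it would say nothing about points $\lm\in\sigma(T)\setminus\bigcap_K\sigma(T+K)$, which must also be covered. Second, step~(5) is not supported: there is no lemma in Subsection~\ref{subsectionpropertiessigmaess.pseudo} that upgrades an almost-minimising sequence to a weakly null one via Fredholm structure while preserving $\|(T-\lm)x_n\|\to\eps$, and the naive subtraction $x_n\mapsto x_n-x$ destroys both the normalisation and the limit value.

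The paper's argument is structurally different and does not attempt to produce a Weyl sequence directly at the target point $\lm$. Instead it works at a single fixed $\lm_0\in U$: there one shows $1/\eps^2$ lies in the spectrum of the selfadjoint operator $(T^*-\lbar{\lm_0})^{-1}(T-\lm_0)^{-1}$, rules out the eigenvalue alternative by subharmonicity of $\mu\mapsto\|(T-\mu)x\|$ for a \emph{fixed} vector $x$ (this is where the maximum principle enters, on a vector-dependent function, not on the resolvent norm), and concludes $\lm_0\in\Lambda_{{\rm ess},\eps}(T)$. The crucial extra ingredient is then the Globevnik--Vidav lemma, which yields a weakly null normalised sequence $(e_n)$ with $\|(T-\lm_0)^{-1}e_n\|\to 1/\eps$ \emph{and} $(T-\lm_0)^{-2}e_n\to 0$. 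This second-order vanishing is the transport mechanism: writing $(T+K-\lm)^{-1}-(T-\lm_0)^{-1}$ via two applications of the resolvent identity as a compact term plus $(\lm-\lm_0)\cdot(\text{bounded})\cdot(T-\lm_0)^{-2}$, both pieces annihilate $e_n$, so $\|(T+K-\lm)^{-1}e_n\|\to 1/\eps$ as well. This gives $\lm\in\Lambda_{{\rm ess},\eps}(T+K)$, and one finishes with the compact-perturbation invariance $\Lambda_{{\rm ess},\eps}(T+K)=\Lambda_{{\rm ess},\eps}(T)$ from Theorem~\ref{prop.pert.pseudo}. Your plan has no counterpart to the Globevnik--Vidav step, and without it there is no way to pass from $\lm_0\in U$ to an arbitrary $\lm$ outside $\bigcap_K\sigma(T+K)$.
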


\begin{rem}
Note that, by~\cite[Section~IX.1, Theorems~IX.1.3,~1.4]{edmundsevans}, 
$$\sigma_{\rm ess}(T)\cup\sigma_{\rm ess}(T^*)^*=\sigma_{e3}(T)\subset\sigma_{e4}(T)=\underset{K \text{ compact}}{\bigcap}\sigma(T+K).$$
\end{rem}

\subsection{Properties of the limiting essential $\eps$-near spectrum}\label{subsectionpropertiessigmaess.pseudo}


\begin{prop}\label{lemma.limiting.eps}
\begin{enumerate}
\item[\rm i)]
The sets $\Lambda_{{\rm ess},\eps}(T)$, $\Lambda_{{\rm ess},\eps}\big((T_n)_{n\in\N}\big)$  are closed subsets of~$\C$.

\item[\rm ii)]
We have 
\beq \label{eq.inclusionepsnbhofsigmaess}
\begin{aligned}
\big\{\lm+z:\,\lm\in\sigma_{\rm ess}(T),\,|z|=\eps\big\}&\subset \Lambda_{{\rm ess},\eps}(T),\\
\big\{\lm+z:\,\lm\in\sigma_{\rm ess}\big((T_n)_{n\in\N}\big),\,|z|=\eps\big\}&\subset \Lambda_{{\rm ess},\eps}\big((T_n)_{n\in\N}\big).
\end{aligned}
\eeq
\end{enumerate}

\end{prop}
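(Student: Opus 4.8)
The plan is to prove the two parts of Proposition~\ref{lemma.limiting.eps} separately, both by reduction to (variants of) the corresponding facts about the limiting essential spectrum established earlier, together with a diagonal sequence argument.

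\medskip

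\noindent\textbf{Part i) (closedness).}
First I would treat $\Lambda_{{\rm ess},\eps}(T)$. Suppose $\lm^{(j)}\in\Lambda_{{\rm ess},\eps}(T)$ with $\lm^{(j)}\to\lm$ as $j\to\infty$. For each $j$ pick, by Definition~\ref{def.limiting.eps}, a sequence $(x^{(j)}_k)_{k\in\N}\subset\dom(T)$ with $\|x^{(j)}_k\|=1$, $x^{(j)}_k\stackrel{w}{\to}0$ as $k\to\infty$, and $\|(T-\lm^{(j)})x^{(j)}_k\|\to\eps$. Then I would extract a diagonal sequence $y_j:=x^{(j)}_{k_j}$ with $k_j$ chosen increasing so fast that simultaneously $|\langle y_j,z\rangle|$ is small (for $z$ ranging over a fixed countable dense set used to test weak convergence; this is the standard way to make a diagonal sequence weakly null) and $\big|\,\|(T-\lm^{(j)})y_j\|-\eps\,\big|<1/j$. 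Since $\|(T-\lm)y_j\|$ differs from $\|(T-\lm^{(j)})y_j\|$ by at most $|\lm-\lm^{(j)}|\,\|y_j\|=|\lm-\lm^{(j)}|\to0$, we get $\|(T-\lm)y_j\|\to\eps$, $\|y_j\|=1$, $y_j\stackrel{w}{\to}0$, so $\lm\in\Lambda_{{\rm ess},\eps}(T)$. The argument for $\Lambda_{{\rm ess},\eps}\big((T_n)_{n\in\N}\big)$ is the same, except the inner index now also carries the ambient index $n$: for $\lm^{(j)}$ one has an infinite $I_j\subset\N$ and vectors $x_n\in\dom(T_n)$, $n\in I_j$; the diagonal extraction produces a single infinite index set $I\subset\N$ and, for $n\in I$, vectors $x_n\in\dom(T_n)$ with $\|x_n\|=1$, $x_n\stackrel{w}{\to}0$, $\|(T_n-\lm)x_n\|\to\eps$, witnessing $\lm\in\Lambda_{{\rm ess},\eps}\big((T_n)_{n\in\N}\big)$. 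This is precisely the ``standard diagonal sequence argument'' already invoked for Proposition~\ref{propsigmaeclosed}, so I expect to write it out compactly.

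\medskip

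\noindent\textbf{Part ii) (the $\eps$-sphere around $\sigma_{\rm ess}$).}
Take $\lm\in\sigma_{\rm ess}(T)$ and $z\in\C$ with $|z|=\eps$; I must show $\lm+z\in\Lambda_{{\rm ess},\eps}(T)$. By definition of $\sigma_{\rm ess}(T)$ there is $(x_k)_{k\in\N}\subset\dom(T)$ with $\|x_k\|=1$, $x_k\stackrel{w}{\to}0$, $\|(T-\lm)x_k\|\to0$. Then
\[
(T-(\lm+z))x_k=(T-\lm)x_k - z\,x_k,
\]
so $\big|\,\|(T-(\lm+z))x_k\| - |z|\,\|x_k\|\,\big|\le\|(T-\lm)x_k\|\to0$, i.e.\ $\|(T-(\lm+z))x_k\|\to|z|=\eps$, while $\|x_k\|=1$ and $x_k\stackrel{w}{\to}0$ are unchanged. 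Hence $\lm+z\in\Lambda_{{\rm ess},\eps}(T)$. The second inclusion in~\eqref{eq.inclusionepsnbhofsigmaess} is identical with $T$ replaced by $T_n$ and the index set $I$ carried along verbatim: given $\lm\in\sigma_{\rm ess}\big((T_n)_{n\in\N}\big)$ with witnessing $I\subset\N$ and $x_n\in\dom(T_n)$, the same one-line estimate gives $\|(T_n-(\lm+z))x_n\|\to\eps$ along $n\in I$.

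\medskip

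\noindent\textbf{Main obstacle.}
Neither part is deep; the only thing requiring care is the diagonal extraction in part i), specifically arranging weak nullity of the diagonal sequence. Since $H_0$ is separable, I would fix a countable dense subset $\{z_1,z_2,\dots\}$ of $H_0$ and, when choosing $k_j$, also require $|\langle x^{(j)}_{k_j},z_i\rangle|<1/j$ for all $i\le j$; boundedness of $(y_j)$ together with this gives $\langle y_j,z_i\rangle\to0$ for every $i$, hence $y_j\stackrel{w}{\to}0$. Everything else is the triangle inequality. I would therefore present part ii) in full (it is two lines) and part i) in the condensed ``diagonal argument'' style, pointing to the identical reasoning behind Proposition~\ref{propsigmaeclosed}.
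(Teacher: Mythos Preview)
Your proposal is correct and follows exactly the approach the paper takes: the paper's entire proof reads ``A diagonal sequence argument implies claim~i), and claim~ii) is easy to see,'' and your write-up simply spells out both of these steps. Your handling of weak nullity via a countable dense subset of $H_0$ is the standard way to make the diagonal extraction rigorous, and your two-line computation for part~ii) is precisely what ``easy to see'' means here.
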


\begin{proof}
A diagonal sequence argument implies claim~i), and claim ii) is easy to see.
\end{proof}

\begin{rem}
The inclusions in claim~ii) may be strict. 
In fact, for Shargorodsky's example \cite[Theorem~3.2]{Shargorodsky-2008-40} of an operator $T$ with constant ($1/\eps=1$) resolvent norm on an open set, the compressions $T_n$ onto the span of the first $2n$ basis vectors satisfy 
$$\sigma_{\rm ess}\big((T_n)_{n\in\N}\big)=\sigma_{\rm ess}(T)=\emptyset, \quad \underset{K \text{ compact}}{\bigcap}\sigma(T+K)=\emptyset.$$
Hence the left hand side of~\eqref{eq.inclusionepsnbhofsigmaess} is empty whereas the right hand side equals $\C$ by Theorem~\ref{thmconstresnorm}.
\end{rem}

Analogously as $\sigma_{\rm ess}(T)\subset\sigma_{\rm ess}\big((T_n)_{n\in\N}\big)$ (see Proposition~\ref{propsigmaess}), also the essential $\eps$-near spectrum is contained in its limiting counterpart.

\begin{prop}\label{proplimsigmaess}
\begin{enumerate}
\item[\rm i)] Assume that $T_n\gsr T$. Then $\Lambda_{{\rm ess},\eps}(T)\subset\Lambda_{{\rm ess},\eps}\big((T_n)_{n\in\N}\big).$
\item[\rm ii)] If $H_0=H$ and $T_n\gnr T$, then $\Lambda_{{\rm ess},\eps}(T)\cap\rho(T)=\Lambda_{{\rm ess},\eps}\big((T_n)_{n\in\N}\big)\cap\rho(T).$
\end{enumerate}
\end{prop}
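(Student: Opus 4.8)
\textbf{Proof plan for Proposition~\ref{proplimsigmaess}.}
The plan is to mirror the structure of the proof of Proposition~\ref{propsigmaess}, replacing the role of the essential spectrum by the essential $\eps$-near spectrum throughout. For claim~i), I would start with $\lm\in\Lambda_{{\rm ess},\eps}(T)$, so by Definition~\ref{def.limiting.eps} there is a sequence $x_k\in\dom(T)$, $k\in\N$, with $\|x_k\|=1$, $x_k\stackrel{w}{\to}0$ and $\|(T-\lm)x_k\|\to\eps$. Fixing each $k$ and invoking Lemma~\ref{AndAJndJnew} (which uses only $T_n\gsr T$), I obtain for each $k$ a sequence $x_{k;n}\in\dom(T_n)$ with $\|x_{k;n}\|=1$, $\|x_{k;n}-x_k\|\to 0$ and $\|T_nx_{k;n}-Tx_k\|\to 0$ as $n\to\infty$. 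A diagonal extraction $n_k$ chosen so that $\|x_{k;n_k}-x_k\|<1/k$ and $\|T_{n_k}x_{k;n_k}-Tx_k\|<1/k$ then yields $\widetilde x_k:=x_{k;n_k}\in\dom(T_{n_k})$ with $\|\widetilde x_k\|=1$, $\widetilde x_k\stackrel{w}{\to}0$ (since $x_k\stackrel{w}{\to}0$ and the difference goes to zero in norm), and by the triangle inequality $\big|\,\|(T_{n_k}-\lm)\widetilde x_k\|-\|(T-\lm)x_k\|\,\big|\leq \|T_{n_k}\widetilde x_k-Tx_k\|+|\lm|\,\|\widetilde x_k-x_k\|\to 0$, so $\|(T_{n_k}-\lm)\widetilde x_k\|\to\eps$. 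Hence $\lm\in\Lambda_{{\rm ess},\eps}\big((T_n)_{n\in\N}\big)$. This part is essentially routine; the only point to be careful about is that the convergence of norms to $\eps$ survives the diagonal passage, which the triangle-inequality estimate above handles.

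For claim~ii), where now $H_0=H$ (so $P=P_n=I$) and $T_n\gnr T$, the inclusion ``$\subset$'' on $\rho(T)$ already follows from~i), so the work is the reverse inclusion: if $\lm\in\rho(T)$ and $\lm\in\Lambda_{{\rm ess},\eps}\big((T_n)_{n\in\N}\big)$, then $\lm\in\Lambda_{{\rm ess},\eps}(T)$. Pick $\lm_0\in\rho(T)\cap\bigcap_{n\geq n_0}\rho(T_n)$ with $(T_n-\lm_0)^{-1}\to(T-\lm_0)^{-1}$ in norm. Take the Weyl-type sequence $x_n\in\dom(T_n)$, $n\in I$, with $\|x_n\|=1$, $x_n\stackrel{w}{\to}0$, $\|(T_n-\lm)x_n\|\to\eps$. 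Since $\lm\in\rho(T_n)$ for large $n$ (because $\lm\in\rho(T)$ and $T_n\gnr T$ implies $\lm\in\Delta_b$, indeed $\rho(T_n)$ eventually), I would set $y_n:=(T_n-\lm)x_n$, so $\|y_n\|\to\eps$ and $x_n=(T_n-\lm)^{-1}y_n$. The idea is to transplant this onto $T$: define $\widetilde x_n:=(T-\lm)^{-1}y_n$. Then $\|\widetilde x_n - x_n\| = \|\big((T-\lm)^{-1}-(T_n-\lm)^{-1}\big)y_n\|$, and one needs this to vanish. Here $\lm$ is not the same as $\lm_0$, but norm resolvent convergence at $\lm_0$ propagates to norm resolvent convergence at any $\lm\in\rho(T)$ by the resolvent identity (a standard fact; since $\lm\in\rho(T)$ and the resolvents converge in norm at one point, they converge in norm at $\lm$, using that $(T_n-\lm)^{-1}$ is eventually bounded uniformly), so $\|\big((T-\lm)^{-1}-(T_n-\lm)^{-1}\big)\|\to 0$ and hence $\|\widetilde x_n-x_n\|\to 0$. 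Consequently $\|\widetilde x_n\|\to 1$, $\widetilde x_n\stackrel{w}{\to}0$ (weak limit unchanged under norm-vanishing perturbation), and $(T-\lm)\widetilde x_n=y_n$ gives $\|(T-\lm)\widetilde x_n\|=\|y_n\|\to\eps$. Normalising, $\widehat x_n:=\widetilde x_n/\|\widetilde x_n\|$ satisfies $\|\widehat x_n\|=1$, $\widehat x_n\stackrel{w}{\to}0$ and $\|(T-\lm)\widehat x_n\|\to\eps$, so $\lm\in\Lambda_{{\rm ess},\eps}(T)$.

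I expect the main obstacle to be the bookkeeping around the shift from the base point $\lm_0$ (where norm resolvent convergence is assumed) to the point $\lm\in\rho(T)$ of interest: one must verify both that $\lm$ eventually lies in $\rho(T_n)$ and that $(T_n-\lm)^{-1}\to(T-\lm)^{-1}$ in operator norm. The cleanest route is the second resolvent identity, $(T_n-\lm)^{-1}=(T_n-\lm_0)^{-1}\big(I-(\lm-\lm_0)(T_n-\lm)^{-1}\big)^{-1}$ rearranged appropriately, together with a Neumann-series argument exploiting that $(T_n-\lm_0)^{-1}\to(T-\lm_0)^{-1}$ and $\lm\in\rho(T)$; this is standard but should be spelled out carefully since the whole reverse inclusion hinges on it. An alternative, perhaps slicker, is to apply a mapping-theorem argument as in Theorem~\ref{mappingthm} and Proposition~\ref{propsigmaess}~ii), translating the $\eps$-near condition for $T_n$ into one for $(T_n-\lm_0)^{-1}$ and then back for $T$; but the $\eps$ changes under the resolvent transform (the perturbation scales by $\|(T_n-\lm_0)x_n\|^{-1}\to|\lm-\lm_0|^{-1}$), so one would track an $\eps'$ and recover $\eps$ at the end. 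I would go with the direct transplantation argument above as the primary line, since it keeps $\eps$ fixed throughout.
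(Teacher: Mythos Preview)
Your proof of part~i) is exactly what the paper intends (it merely says ``analogous to the proof of Proposition~\ref{propsigmaess}~i)''), and your diagonal extraction with the triangle-inequality estimate is correct.

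For part~ii), your argument is correct and follows the same overall strategy as the paper---propagate norm resolvent convergence from $\lm_0$ to $\lm\in\rho(T)$ and transplant the Weyl-type sequence---but your execution is slightly more direct. The paper first normalises $y_n$ to $w_n:=(T_n-\lm)x_n/\|(T_n-\lm)x_n\|$, invokes Lemma~\ref{lemmaClaim} to obtain $w_n\stackrel{w}{\to}0$, then passes to $v_n:=(T-\lm)^{-1}w_n/\|(T-\lm)^{-1}w_n\|$. You instead set $\widetilde x_n:=(T-\lm)^{-1}y_n$ and observe that $\|\widetilde x_n-x_n\|\to 0$ directly from the norm resolvent convergence at $\lm$; weak convergence $\widetilde x_n\stackrel{w}{\to}0$ then comes for free from $x_n\stackrel{w}{\to}0$, bypassing Lemma~\ref{lemmaClaim}. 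This buys a small simplification. The paper handles the propagation step by citing \cite[Proposition~2.16~ii)]{Boegli-chap1}; your Neumann-series sketch is the right idea for making this self-contained.

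One caution: your parenthetical ``so $P=P_n=I$'' is not quite right. The hypothesis $H_0=H$ gives $P=I$, but the $H_n$ are still proper subspaces with $P_n\neq I$ in general. This does not damage your argument, because $y_n=(T_n-\lm)x_n\in H_n$ so $(T_n-\lm)^{-1}P_ny_n=(T_n-\lm)^{-1}y_n=x_n$, and the norm convergence $(T_n-\lm)^{-1}P_n\to(T-\lm)^{-1}$ still yields $\|\widetilde x_n-x_n\|\to 0$. Just remove the incorrect claim about $P_n$.
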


For the proof we use the following simple result.
\begin{lemma}\label{lemmaClaim}
Assume that $T_n^*\gsr T^*$. Suppose that there exist an infinite subset $I\subset\N$ and $x_n\in\dom(T_n)$, $n\in I$, with $\|x_n\|=1$ and $x_n\stackrel{w}{\to} 0$. If $(\|T_nx_n\|)_{n\in I}$ is bounded, then $T_nx_n\stackrel{w}{\to} 0$.
\end{lemma}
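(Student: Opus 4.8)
The plan is to show that $T_nx_n\stackrel{w}{\to}0$ by testing against a dense set of vectors and exploiting the adjoint resolvent convergence $T_n^*\gsr T^*$ together with the boundedness assumption on $(\|T_nx_n\|)_{n\in I}$. First I would fix $\lm_0$ in the common resolvent set from the definition of $T_n^*\gsr T^*$, so that $(T_n^*-\overline{\lm_0})^{-1}P_n\s(T^*-\overline{\lm_0})^{-1}P$. Since $(T_n^*-\overline{\lm_0})^{-1}$ has range equal to $\dom(T_n^*)$, and the range of $(T^*-\overline{\lm_0})^{-1}$ is $\dom(T^*)$, the set $\{(T^*-\overline{\lm_0})^{-1}z:\,z\in H_0\}=\dom(T^*)$ is dense in $H$; combined with the decomposition $H_0=H\oplus H^\perp$ and the fact that $\langle T_nx_n,w\rangle=\langle P_nT_nx_n,w\rangle=\langle T_nx_n,P_nw\rangle$ with $P_nw\to P w$, it suffices to check weak convergence against vectors of the form $w=(T^*-\overline{\lm_0})^{-1}z$, $z\in H$ (the component of any test vector in $H^\perp$ contributes nothing in the limit since $T_nx_n\in H_n$ and $P_n\s P$).

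Next, for such a $w$, I would set $w_n:=(T_n^*-\overline{\lm_0})^{-1}P_nz\in\dom(T_n^*)$, so that $w_n\to w$ by generalised strong resolvent convergence and $(T_n^*-\overline{\lm_0})w_n=P_nz$. The key computation is then
\begin{align*}
\langle T_nx_n,w_n\rangle
&=\langle (T_n-\lm_0)x_n,w_n\rangle+\lm_0\langle x_n,w_n\rangle\\
&=\langle x_n,(T_n^*-\overline{\lm_0})w_n\rangle+\lm_0\langle x_n,w_n\rangle\\
&=\langle x_n,P_nz\rangle+\lm_0\langle x_n,w_n\rangle.
\end{align*}
Here the first term is $\langle x_n,z\rangle+\langle x_n,(P_n-I)z\rangle\to 0$, using $x_n\stackrel{w}{\to}0$ and $\|(P_n-I)z\|\to 0$ (after noting $z\in H$, so $P_nz\to Pz=z$), and the second term tends to $0$ since $x_n\stackrel{w}{\to}0$ and $w_n\to w$ (weak times strong convergence of a bounded sequence). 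Hence $\langle T_nx_n,w_n\rangle\to 0$, and since $\|T_nx_n\|$ is bounded and $\|w_n-w\|\to 0$, also $\langle T_nx_n,w\rangle\to 0$. This is the only place the boundedness hypothesis is used: it upgrades weak convergence tested against $w_n$ to weak convergence tested against the limit $w$.

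Finally I would wrap up: the vectors $w=(T^*-\overline{\lm_0})^{-1}z$ with $z\in H$ form a dense subset of $H$, and for an arbitrary $v\in H_0$ we write $v=Pv+(I-P)v$ and observe $\langle T_nx_n,(I-P)v\rangle=\langle T_nx_n,(P_n-P)v\rangle+\langle T_nx_n,(P_n v - v)\rangle\cdot(\text{sign})$—more cleanly, $\langle T_nx_n, v\rangle = \langle T_nx_n, P_n v\rangle$ and $P_nv\to Pv$, so combined with the uniform bound on $\|T_nx_n\|$ and density of $\{(T^*-\overline{\lm_0})^{-1}z:z\in H\}$ in $H$, a standard $\eps/3$ argument gives $\langle T_nx_n,v\rangle\to 0$ for every $v\in H_0$, i.e.\ $T_nx_n\stackrel{w}{\to}0$. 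The main (very mild) obstacle is purely bookkeeping: handling the discrepancy between $H_n$, $H$ and the ambient $H_0$ via the strong convergence $P_n\s P$, and making sure the density argument only requires test vectors against which the bound on $\|T_nx_n\|$ can be leveraged; there is no serious analytic difficulty here.
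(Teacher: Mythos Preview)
Your argument is correct and proceeds by a genuinely different route than the paper. The paper's proof is more indirect: it sets $y_n:=T_nx_n$, extracts a weakly convergent subsequence $(y_n)_{n\in\widetilde I}$ with limit $y$ (using boundedness of $\|T_nx_n\|$ and weak compactness of bounded sets in $H_0$), and then shows $y=0$ by applying Lemma~\ref{lemmadisccompletelycont}~i) to the resolvents $(T_n-\lm_0)^{-1}$ to deduce $x_n=(T_n-\lm_0)^{-1}(y_n-\lm_0 x_n)\stackrel{w}{\to}(T-\lm_0)^{-1}y$, forcing $(T-\lm_0)^{-1}y=0$ by uniqueness of the weak limit of $(x_n)$.

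Your approach is a direct duality computation: you build test vectors $w_n=(T_n^*-\overline{\lm_0})^{-1}P_nz$ out of the adjoint resolvent, move the operator across the inner product, and then use density of $\dom(T^*)$ in $H$ together with the uniform bound on $\|T_nx_n\|$ to pass to arbitrary test vectors. This avoids both the subsequence extraction and the appeal to Lemma~\ref{lemmadisccompletelycont}, at the cost of a routine $\eps/3$ density argument. The paper's version is shorter given the machinery already in place; yours is more self-contained and makes the role of the boundedness hypothesis slightly more transparent (it is used exactly to pass from $w_n$ to $w$ and then through the density argument).
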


\begin{proof}
Define $y_n:=T_nx_n$. Let $\lm_0\in\Delta_b\big((T_n)_{n\in\N}\cap\rho(T)$ satisfy $(T_n^*-\overline{\lm_0})^{-1}P_n\s (T^*-\overline{\lm_0})^{-1}P$.
Since $H_0$ is weakly compact, there exist $y\in H_0$ and an infinite subset $\widetilde I\subset I$ such that $(y_n)_{n\in\widetilde I}$ converges weakly to $y$. We prove that $y=0$.
Lemma~\ref{lemmadisccompletelycont}~i) implies $y\in H$ and $(T_n-\lm_0)^{-1}y_n\stackrel{w}{\to}(T-\lm_0)^{-1}y$.
Hence $x_n=(T_n-\lm_0)^{-1}y_n-\lm_0x_n\stackrel{w}{\to} (T-\lm_0)^{-1}y$.
The uniqueness of the weak limit yields $(T-\lm_0)^{-1}y=0$ and thus $y=0$.
\end{proof}

\begin{proof}[Proof of Proposition~{\rm\ref{proplimsigmaess}}]
i) The proof is analogous to the one of Proposition~\ref{propsigmaess}~i).

ii) Using claim i), it remains to prove $\Lambda_{{\rm ess},\eps}\big((T_n)_{n\in\N}\big)\cap\rho(T)\subset \Lambda_{{\rm ess},\eps}(T)$.
Let $\lm\in \Lambda_{{\rm ess},\eps}\big((T_n)_{n\in\N}\big)\cap\rho(T)$. 
By Definition~\ref{def.limiting.eps}, there exist an infinite subset $I\subset\N$ and $x_n\in\dom(T_n)$, $n\in I$, with $\|x_n\|=1$, $x_n\stackrel{w}{\to} 0$ and $\|(T_n-\lm)x_n\|\to\eps$.
Since $\lm\in\rho(T)$,~\cite[Proposition~2.16~ii)]{Boegli-chap1} implies that there exists $n_{\lm}\in\N$ such that $\lm\in\rho(T_n)$, $n\geq n_{\lm}$, and $(T_n-\lm)^{-1}P_n\to (T-\lm)^{-1}$. Define $I_2:=\{\lm\in I:\,n\geq n_{\lm}\}$ and
 $$w_n:=\frac{(T_n-\lm)x_n}{\|(T_n-\lm)x_n\|}\in H_n\subset H, \quad n\in I_2.$$
Then $\|w_n\|=1$ and $w_n\stackrel{w}{\to} 0$ by Lemma~\ref{lemmaClaim}. 
In addition,
$$\|(T_n-\lm)^{-1}w_n\|=\frac{1}{\|(T_n-\lm)x_n\|}\tolong \frac{1}{\eps}, \quad n\in I_2, \quad n\to\infty.$$
Since, in the limit $n\in I_2$, $n\to\infty,$
$$\big|\|(T_n-\lm)^{-1}w_n\|-\|(T-\lm)^{-1}w_n\|\big|\leq \|(T_n-\lm)^{-1}P_n-(T-\lm)^{-1}\|\tolong 0,$$
we conclude $\|(T-\lm)^{-1}w_n\|\to 1/\eps$.
Now define 
$$v_n:=\frac{(T-\lm)^{-1}w_n}{\|(T-\lm)^{-1}w_n\|}\in\dom(T), \quad n\in  I_2.$$
Then $\|v_n\|=1$ and $v_n\stackrel{w}{\to}0$.
Moreover, $\|(T-\lm)v_n\|=\|(T-\lm)^{-1}w_n\|^{-1}\to \eps$, hence $\lm\in\Lambda_{{\rm ess},\eps}(T)$.
\end{proof}

Similarly as for $\sigma_{\rm ess}\big((T_n)_{n\in\N}\big)$ (see Proposition~\ref{propdisccomp}), the set $\Lambda_{{\rm ess},\eps}\big((T_n)_{n\in\N}\big)$ is particularly simple if the operators $T_n$, $n\in\N$, or their resolvents form a discretely compact sequence. 

\begin{prop}\label{propdisccompeps}
\begin{enumerate}
\item[\rm i)]
If $T_n\in L(H_n)$, $n\in\N$, are so that $(T_n)_{n\in\N}$ is a discretely compact sequence and $(T_n^*P_n)_{n\in\N}$ is strongly convergent, then $$\Lambda_{{\rm ess},\eps}\big((T_n)_{n\in\N}\big)=\{\lm\in\C:\,|\lm|=\eps\}.$$
If, in addition, $(T_nP_n)_{n\in\N}$ is strongly convergent, then $$\Lambda_{{\rm ess},\eps}\big((T_n^*)_{n\in\N}\big)^*=\{\lm\in\C:\,|\lm|=\eps\}.$$

\item[\rm ii)]
If there exists $\lm_0\in\underset{n\in\N}{\bigcap}\rho(T_n)\cap\rho(T)$ such that $((T_n-\lm_0)^{-1})_{n\in\N}$ is a discretely compact sequence and $(T_n^*-\overline{\lm_0})^{-1}P_n\s (T^*-\overline{\lm_0})^{-1}P$, then $$\Lambda_{{\rm ess},\eps}\big((T_n)_{n\in\N}\big)=\emptyset.$$
If, in addition, $(T_n-\lm_0)^{-1}P_n\s (T-\lm_0)^{-1}P$, then $$\Lambda_{{\rm ess},\eps}\big((T_n^*)_{n\in\N}\big)^*=\emptyset.$$
\end{enumerate}
\end{prop}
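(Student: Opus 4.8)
\textbf{Proof plan for Proposition~\ref{propdisccompeps}.}

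The plan is to mirror the structure of the proof of Proposition~\ref{propdisccomp}, using Lemma~\ref{lemmadisccompletelycont}~ii) as the workhorse. For claim~i), first I would prove the inclusion $\{\lm\in\C:\,|\lm|=\eps\}\subset\Lambda_{{\rm ess},\eps}\big((T_n)_{n\in\N}\big)$: this is precisely the first inclusion of Proposition~\ref{lemma.limiting.eps}~ii) applied to $\sigma_{\rm ess}\big((T_n)_{n\in\N}\big)$, which by Proposition~\ref{propdisccomp}~i) equals $\{0\}$ under the stated hypotheses. For the reverse inclusion, let $\lm\in\Lambda_{{\rm ess},\eps}\big((T_n)_{n\in\N}\big)$, so there exist an infinite $I\subset\N$ and $x_n\in H_n$ with $\|x_n\|=1$, $x_n\stackrel{w}{\to}0$, $\|(T_n-\lm)x_n\|\to\eps$. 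Since $(T_n)_{n\in\N}$ is discretely compact and $(T_n^*P_n)_{n\in\N}$ is strongly convergent, Lemma~\ref{lemmadisccompletelycont}~ii) (with $x=0$, $B_0x=0$) gives $T_nx_n\to 0$. Hence $\|(T_n-\lm)x_n\|=\|T_nx_n-\lm x_n\|\to\|\lm\,x_n\|$ in the sense that $\big|\|(T_n-\lm)x_n\|-|\lm|\big|\le\|T_nx_n\|\to 0$, so $|\lm|=\eps$. The second statement of~i) follows by the same argument applied to $(T_n^*)_{n\in\N}$, which is discretely compact by~\cite[Proposition~2.10]{Boegli-chap1} once $(T_nP_n)_{n\in\N}$ is also strongly convergent; here I use that $(T_n^{**})_{n\in\N}=(T_n)_{n\in\N}$ is the strongly convergent sequence needed to apply Lemma~\ref{lemmadisccompletelycont}~ii) to $T_n^*$.

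For claim~ii), the natural route is to reduce to the resolvents rather than to reprove everything. Under the hypotheses, $((T_n-\lm_0)^{-1})_{n\in\N}$ is discretely compact with $((T_n-\lm_0)^{-1})^*P_n=((T_n^*-\overline{\lm_0})^{-1})P_n$ strongly convergent, so by claim~i) we have $\Lambda_{{\rm ess},\delta}\big(((T_n-\lm_0)^{-1})_{n\in\N}\big)=\{z:|z|=\delta\}$ for every $\delta>0$; in particular $0\notin\Lambda_{{\rm ess},\delta}\big(((T_n-\lm_0)^{-1})_{n\in\N}\big)$. It remains to transfer this to $T_n$ via a mapping argument. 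If $\lm\in\Lambda_{{\rm ess},\eps}\big((T_n)_{n\in\N}\big)$ with witnesses $x_n$, note $\lm\ne\lm_0$ (since for $\lm=\lm_0$ one would need $\|(T_n-\lm_0)x_n\|\to\eps$ while $(T_n-\lm_0)^{-1}$ discretely compact forces, along a subsequence, $x_n=(T_n-\lm_0)^{-1}(T_n-\lm_0)x_n$ to converge, contradicting $x_n\stackrel{w}{\to}0$, $\|x_n\|=1$); then the computation in the proof of Theorem~\ref{mappingthm}, ``(1)$\To$(2)'', shows that $y_n:=(T_n-\lm_0)x_n/\|(T_n-\lm_0)x_n\|$ satisfy $\|y_n\|=1$, $y_n\stackrel{w}{\to}0$, and $\|((T_n-\lm_0)^{-1}-(\lm-\lm_0)^{-1})y_n\|\to\eps/(|\lm-\lm_0|\cdot\eps)=|\lm-\lm_0|^{-1}$, so $(\lm-\lm_0)^{-1}\in\Lambda_{{\rm ess},|\lm-\lm_0|^{-1}}\big(((T_n-\lm_0)^{-1})_{n\in\N}\big)$. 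But that set is $\{z:|z|=|\lm-\lm_0|^{-1}\}$, which does not contain $0$ but does contain $(\lm-\lm_0)^{-1}$ — no contradiction yet. The contradiction instead comes from discrete compactness directly: Lemma~\ref{lemmadisccompletelycont}~ii) applied to $B_n=(T_n-\lm_0)^{-1}$ and the bounded sequence $(T_n-\lm_0)x_n$ (bounded since $\|(T_n-\lm)x_n\|\to\eps$ and $\|x_n\|=1$) forces, along a subsequence, $x_n=(T_n-\lm_0)^{-1}((T_n-\lm_0)x_n)\to (T-\lm_0)^{-1}\cdot 0=0$, contradicting $\|x_n\|=1$. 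Hence $\Lambda_{{\rm ess},\eps}\big((T_n)_{n\in\N}\big)=\emptyset$. The ``in addition'' part follows by applying the same reasoning to $(T_n^*)_{n\in\N}$, using \cite[Proposition~2.10]{Boegli-chap1} to get discrete compactness of $((T_n^*-\overline{\lm_0})^{-1})_{n\in\N}$.

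The main obstacle I anticipate is making the argument for claim~ii) clean: one must be careful that the boundedness of $((T_n-\lm_0)x_n)_{n\in I}$ is genuinely available (it is, from $\|(T_n-\lm_0)x_n\|\le\|(T_n-\lm)x_n\|+|\lm-\lm_0|\to\eps+|\lm-\lm_0|$) before invoking Lemma~\ref{lemmadisccompletelycont}~ii), and that the weak limit of $(T_n-\lm_0)^{-1}$ applied to this bounded sequence is computed with the correct limit operator using $((T_n^*-\overline{\lm_0})^{-1})P_n\s(T^*-\overline{\lm_0})^{-1}P$ together with $\lm_0\in\rho(T)$. Everything else is a routine transcription of the proofs of Propositions~\ref{propdisccomp} and~\ref{proplimsigmaess}, so I would keep the write-up short, citing those proofs for the parts that are verbatim analogous.
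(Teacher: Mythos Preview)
Your plan for i) is correct and is exactly the paper's approach (the paper just says ``similar to the proof of Proposition~\ref{propdisccomp}~i)'', and your write-up is what that sentence expands to).

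For ii) you also arrive at the paper's argument, but there is one step you have not justified. To invoke Lemma~\ref{lemmadisccompletelycont}~ii) with $B_n=(T_n-\lm_0)^{-1}$ and input sequence $w_n:=(T_n-\lm_0)x_n$, you need a \emph{weak limit} for $w_n$, not merely boundedness; your line ``$x_n\to(T-\lm_0)^{-1}\cdot 0$'' tacitly assumes $w_n\stackrel{w}{\to}0$, which you never established. The paper fills this gap with Lemma~\ref{lemmaClaim}: from $T_n^*\gsr T^*$ (i.e.\ $(T_n^*-\overline{\lm_0})^{-1}P_n\s(T^*-\overline{\lm_0})^{-1}P$) and the boundedness of $\|T_nx_n\|$ one gets $(T_n-\lm)x_n\stackrel{w}{\to}0$, hence $(T_n-\lm_0)x_n=(T_n-\lm)x_n+(\lm-\lm_0)x_n\stackrel{w}{\to}0$; then Lemma~\ref{lemmadisccompletelycont}~ii) applies verbatim and gives $x_n\to 0$, the contradiction. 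An equivalent fix in your framework: pass to a weakly convergent subsequence $w_n\stackrel{w}{\to}z$ (weak compactness), apply Lemma~\ref{lemmadisccompletelycont}~ii) to get $x_n\to(T-\lm_0)^{-1}z$, and then use $x_n\stackrel{w}{\to}0$ together with $\lm_0\in\rho(T)$ to force $z=0$. Either way, the point is that boundedness alone does not let you write ``$\cdot\,0$''.

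Two smaller remarks. First, the mapping-theorem detour in ii) is a dead end, as you yourself note; drop it and go straight to the direct argument. Second, your separate treatment of the case $\lm=\lm_0$ is unnecessary: the direct argument above works for every $\lm$ uniformly, since $(T_n-\lm_0)x_n\stackrel{w}{\to}0$ does not rely on $\lm\neq\lm_0$.
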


\begin{proof}
i)
The proof is similar to the one of  Proposition~\ref{propdisccomp}~i).

ii)
Assume that there exists $\lm\in\Lambda_{{\rm ess},\eps}\big((T_n)_{n\in\N}\big)$. 
Then there are an infinite subset $I\subset\N$ and $x_n\in\dom(T_n)$, $n\in I$, with 
$$\|x_n\|=1, \quad x_n\stackrel{w}{\tolong} 0, \quad \|(T_n-\lm)x_n\|\tolong\eps, \quad n\in I, \quad n\to\infty.$$
Define $$y_n:=(T_n-\lm)x_n, \quad n\in I.$$
By  Lemma~\ref{lemmaClaim}, $y_n\stackrel{w}{\to}0$ as $n\in I$, $n\to\infty$.
Hence $(T_n-\lm_0)x_n=y_n+(\lm-\lm_0)x_n\stackrel{w}{\to}0$ and thus, by the assumptions and Lemma~\ref{lemmadisccompletelycont}~ii),
$$x_n=(T_n-\lm_0)^{-1}\big(y_n+(\lm-\lm_0)x_n)\tolong 0, \quad n\in I, \quad n\to\infty.$$
The obtained contradiction to $\|x_n\|=1$, $n\in I$, proves the first claim.

The second claim is obtained analogously, using that $((T_n^*-\overline{\lm_0})^{-1})_{n\in\N}$ is discretely compact  by~\cite[Proposition~2.10]{Boegli-chap1}.
\end{proof}

We prove a perturbation result for $\Lambda_{{\rm ess},\eps}\big((T_n)_{n\in\N}\big)$ and, in claim~ii), also for $\sigma_{\rm ess} \big((T_n)_{n\in\N}\big)$;
for the latter we use that the assumptions used here imply the assumptions of Theorem~\ref{thmreldisccomppert}.

\begin{theorem}\label{prop.pert.pseudo}
\begin{enumerate}
\item[\rm i)]
Let $B_n\in L(H_n),\,n\in\N$.
 If the sequence $(B_n)_{n\in\N}$ is discretely compact and $(B_n^*P_n)_{n\in\N}$ is strongly convergent, then 
$$\Lambda_{{\rm ess},\eps}\left((T_n+B_n)_{n\in\N}\right)=\Lambda_{{\rm ess},\eps}\left((T_n)_{n\in\N}\right).$$
If, in addition, $(B_nP_n)_{n\in\N}$ is strongly convergent, then 
$$\Lambda_{{\rm ess},\eps}\left(((T_n+B_n)^*)_{n\in\N}\right)^*=\Lambda_{{\rm ess},\eps}\left((T_n^*)_{n\in\N}\right)^*.$$

\item[\rm ii)]
Let $S$ and $S_n$, $n\in\N$, be linear operators in $H$ and $H_n$, $n\in\N$, with $\dom(T)\subset\dom(S)$ and
$\dom(T_n)\subset\dom(S_n)$, $n\in\N$, respectively.
Assume that there exist $\lm_0\in\underset{n\in\N}{\bigcap}\rho(T_n)\cap\rho(T)$ and $\gamma_{\lambda_0}<1$ such that
\begin{enumerate}
\item[\rm(a)]
$\|S(T-\lm_0)^{-1}\|<1$ and $\|S_n(T_n-\lm_0)^{-1}\|\leq\gamma_{\lambda_0}$ for all $n\in\N$;
\item[\rm(b)]
the sequence  $\big(S_n(T_n-\lm_0)^{-1}\big)_{n\in\N}$ is discretely compact;
\item[\rm(c)]
we have\vspace{-2mm}
$$\begin{array}{rl}(T_n^*-\overline{\lm_0})^{-1}P_n\hspace{-2mm}&\slong (T^*-\lbar{\lm_0})^{-1}P, \\
 (S_n(T_n-\lm_0)^{-1})^*P_n\hspace{-2mm}&\slong (S(T-{\lm_0})^{-1})^*P, \end{array}\quad n\to\infty.$$
\end{enumerate}
Then the sums $A:=T+S$ and $A_n:=T_n+S_n$, $n\in\N$, satisfy 
 $(A_n^*-\overline{\lm_0})^{-1}P_n\s (A^*-\lbar{\lm_0})^{-1}P$ and
\beq \label{eq.pert.pseudo}
\Lambda_{{\rm ess},\eps}\big((A_n)_{n\in\N}\big)=\Lambda_{{\rm ess},\eps}\big((T_n)_{n\in\N}\big), \quad \sigma_{\rm ess} \big((A_n)_{n\in\N}\big)=\sigma_{\rm ess} \big((T_n)_{n\in\N}\big).
\eeq

\item[\rm iii)]
Let $S$  be a linear operator in $H$ with $\dom(T)\subset\dom(S)$. If there exists $\lm_0\in\rho(T)$ such that 
$\|S(T-\lm_0)^{-1}\|<1$ and $S(T-\lm_0)^{-1}$ is compact, then
$$\Lambda_{{\rm ess},\eps}(T+S)=\Lambda_{{\rm ess},\eps}(T).$$ 
\end{enumerate}
\end{theorem}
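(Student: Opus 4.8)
\textbf{Proof plan for Theorem~\ref{prop.pert.pseudo}~iii).}
The plan is to derive claim~iii) from claim~ii) by interpreting the single operator $T+S$ as a constant sequence, together with a convergence of resolvent norms that reduces $\Lambda_{{\rm ess},\eps}(T+S)$ to a statement about the resolvent $(T-\lm_0)^{-1}$. First I would set $T_n:=T$, $S_n:=S$, $H_n:=H$, $P_n:=P=I$, and $A_n:=A:=T+S$ for all $n$; with this choice every strong-convergence hypothesis in~(c) is trivially satisfied, the norm bounds in~(a) hold with $\gamma_{\lm_0}:=\|S(T-\lm_0)^{-1}\|<1$, and the constant sequence $\big(S(T-\lm_0)^{-1}\big)_{n\in\N}$ is discretely compact precisely because $S(T-\lm_0)^{-1}$ is a compact operator (a convergent subsequence of any bounded sequence is extracted using compactness of the single operator, exactly as in Definition~\ref{defdiscreteconvofop} with $\widetilde I$ chosen by compactness). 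Hence claim~ii) applies and yields $\Lambda_{{\rm ess},\eps}\big((A)_{n\in\N}\big)=\Lambda_{{\rm ess},\eps}\big((T)_{n\in\N}\big)$, where I write $(A)_{n\in\N}$ for the constant sequence.

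The remaining step is to identify $\Lambda_{{\rm ess},\eps}$ of a constant sequence with $\Lambda_{{\rm ess},\eps}$ of the single operator. For a constant sequence $T_n\equiv T$ with $H_n\equiv H_0\equiv H$, comparing Definition~\ref{def.limiting.eps} of $\Lambda_{{\rm ess},\eps}(T)$ with that of $\Lambda_{{\rm ess},\eps}\big((T_n)_{n\in\N}\big)$ shows the two sets coincide: any sequence $(x_n)_{n\in I}$ witnessing membership in the limiting set along an infinite $I\subset\N$ can be relabelled as a full sequence indexed by $\N$ (the conditions $\|x_n\|=1$, $x_n\stackrel{w}{\to}0$, $\|(T-\lm)x_n\|\to\eps$ are stable under passing to and from subsequences since $\eps$ is a fixed scalar), and conversely a full sequence is trivially a subsequence. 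Applying this equivalence on both sides gives $\Lambda_{{\rm ess},\eps}(T+S)=\Lambda_{{\rm ess},\eps}\big((A)_{n\in\N}\big)=\Lambda_{{\rm ess},\eps}\big((T)_{n\in\N}\big)=\Lambda_{{\rm ess},\eps}(T)$, which is the claim.

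I expect the only genuine point requiring care to be the verification that the constant sequence $\big(S(T-\lm_0)^{-1}\big)_{n\in\N}$ is discretely compact in the sense of Definition~\ref{defdiscreteconvofop}: one must check that compactness of the operator $S(T-\lm_0)^{-1}\in L(H)$ indeed produces, for every bounded sequence $x_n\in H$ and every infinite $I\subset\N$, an infinite $\widetilde I\subset I$ along which $\big(S(T-\lm_0)^{-1}x_n\big)_{n\in\widetilde I}$ converges in $H$; this is immediate since the image of a bounded set under a compact operator is relatively compact. Everything else is bookkeeping: checking that~(a)--(c) of claim~ii) are met by the constant choices and that $\Lambda_{{\rm ess},\eps}$ of a constant sequence reduces to $\Lambda_{{\rm ess},\eps}$ of the operator. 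An alternative, self-contained route — bypassing claim~ii) — would be to adapt directly the proof of Proposition~\ref{proplimsigmaess} combined with the resolvent identity $(A-\lm_0)^{-1}=(T-\lm_0)^{-1}\big(I+S(T-\lm_0)^{-1}\big)^{-1}$ and the completely-continuous behaviour of $S(T-\lm_0)^{-1}$ on weakly null sequences, but routing through claim~ii) is shorter.
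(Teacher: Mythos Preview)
Your proposal is correct and follows exactly the route taken in the paper: the paper's proof of~iii) consists of the single sentence ``The assertion follows from claim~ii) applied to $T_n=T$, $S_n=S$, $n\in\N$,'' and your write-up is a careful unpacking of precisely this reduction, together with the (straightforward) identification of $\Lambda_{{\rm ess},\eps}$ of a constant sequence with $\Lambda_{{\rm ess},\eps}$ of the operator.
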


\begin{proof} 
i)
The proof is analogous to the proof of Theorem~\ref{thmreldisccomppert}~i).

ii)
The proof relies on the following claim, which we prove at the end.

\noindent
\emph{Claim}: We have $\lm_0\in\underset{n\in\N}{\bigcap}\rho(A_n)\cap\rho(A)$,
the sequences 
\beq \label{eq.disccompSnAn}
\big((T_n-\lm_0)^{-1}-(A_n-\lm_0)^{-1}\big)_{n\in\N}, \quad \big(S_n(A_n-\lm_0)^{-1}\big)_{n\in\N}
\eeq
are discretely compact and, in the limit $n\to\infty$,
\beq \label{eq.strongconvSnAnadj}
\begin{array}{rl}(A_n^*-\overline{\lm_0})^{-1}P_n\hspace{-2mm}&\slong (A^*-\lbar{\lm_0})^{-1}P, \\[1mm]
 ((T_n-\lm_0)^{-1}-(A_n-\lm_0)^{-1})^*P_n\hspace{-2mm}&\slong ((T-\lm_0)^{-1}-(A-{\lm_0})^{-1})^*P,\\[1mm]
 (S_n(A_n-\lm_0)^{-1})^*P_n\hspace{-2mm}&\slong (S(A-{\lm_0})^{-1})^*P. \end{array}
\eeq

The second equality in~\eqref{eq.pert.pseudo} follows from the above Claim and Theorem~\ref{thmreldisccomppert}~ii). 

Now let $\lm\in  \Lambda_{{\rm ess},\eps}\big((A_n)_{n\in\N}\big)$.
Then there exist an infinite subset $I\subset\N$ and $x_n\in\dom(A_n)$, $n\in I$, with 
$$\|x_n\|=1, \quad x_n\stackrel{w}{\tolong} 0, \quad \|(A_n-\lm)x_n\|\tolong\eps, \quad n\in I, \quad n\to\infty.$$
Define $$y_n:=(A_n-\lm)x_n, \quad n\in I.$$
By Lemma~\ref{lemmaClaim}, we conclude $y_n\stackrel{w}{\to}0$ as $n\in I$, $n\to\infty$.
Then $(A_n-\lm_0)x_n=y_n+(\lm-\lm_0)x_n\stackrel{w}{\to}0$ and thus, by the above Claim and Lemma~\ref{lemmadisccompletelycont}~ii),
$$S_nx_n=S_n(A_n-\lm_0)^{-1}\big(y_n+(\lm-\lm_0)x_n)\tolong 0, \quad n\in I, \quad n\to\infty.$$
Therefore, $\|(T_n-\lm)x_n\|\leq \|(A_n-\lm)x_n\|+\|S_nx_n\|\to \eps$ and hence $\lm\in  \Lambda_{{\rm ess},\eps}\big((T_n)_{n\in\N}\big)$.

The reverse inclusion $\Lambda_{{\rm ess},\eps}\big((T_n)_{n\in\N}\big)\subset \Lambda_{{\rm ess},\eps}\big((A_n)_{n\in\N}\big)$ is proved analogously, using that $\big(S_n(T_n-\lm_0)^{-1}\big)_{n\in\N}$ is discretely compact and $(S_n(T_n-\lm_0)^{-1})^*P_n\s(S(T-\lm_0)^{-1})^*P$ by assumptions~(b),~(c).

\emph{Proof of Claim}:
A Neumann series argument implies that, for every $n\in\N$, we have $\lm_0\in\rho(A_n)$ and
\beq\label{eq.AnNeumann}
\begin{aligned}
(A_n-\lm_0)^{-1}&=(T_n-\lm_0)^{-1}(I+S_n(T_n-\lm_0)^{-1})^{-1},\\
(A_n^*-\overline{\lm_0})^{-1}&=\big(I+(S_n(T_n-\lm_0)^{-1})^*\big)^{-1}(T_n^*-\overline{\lm_0})^{-1},\\
(S_n(A_n-\lm_0)^{-1})^*&=\big(I+(S_n(T_n-\lm_0)^{-1})^*\big)^{-1}(S_n(T_n-\lm_0)^{-1})^*;
\end{aligned}
\eeq
for $A$, $S$, $T$ we obtain analogous equalities.
Now we apply~\cite[Lemma~3.2]{Boegli-chap1} to $B=(S(T-\lm_0)^{-1})^*$ and $B_n=(S_n(T_n-\lm_0)^{-1})^*$, $n\in\N$; note that $-1\in\Delta_b\big((B_n)_{n\in\N}\big)\cap\rho(B)$ by assumption~(a).
Hence we obtain 
$$\big(I+(S_n(T_n-\lm_0)^{-1})^*\big)^{-1}P_n\slong \big(I+(S(T-\lm_0)^{-1})^*\big)^{-1}P.$$
Now the strong convergences~\eqref{eq.strongconvSnAnadj} follow from~\eqref{eq.AnNeumann} and assumption~(c).
To prove discrete compactness of the sequences in~\eqref{eq.disccompSnAn}, we use that
\begin{align*}
(T_n-\lm_0)^{-1}-(A_n-\lm_0)^{-1}&=(A_n-\lm_0)^{-1}S_n(T_n-\lm_0)^{-1},\\
S_n(A_n-\lm_0)^{-1}&=S_n(T_n-\lm_0)^{-1}(I+S_n(T_n-\lm_0)^{-1})^{-1}.
\end{align*}
Now the claims are obtained by  \cite[Lemma~2.8~i),~ii)]{Boegli-chap1} and using assumptions~(a),~(b) and $(A_n^*-\overline{\lm_0})^{-1}P_n\s (A^*-\lbar{\lm_0})^{-1}P$ by~\eqref{eq.strongconvSnAnadj}.

iii) The assertion follows from claim~ii) applied to $T_n=T$, $S_n=S$, $n\in\N$.
\end{proof}

\subsection{Proofs of pseudospectral convergence results}\label{subsectionproofofmainresults.pseudo}

First we prove the $\eps$-pseudospectral inclusion result.

\begin{proof}[Proof of Theorem~{\rm\ref{thm.inclusion.prelim}}]
i)
The assumption $T_n\gsr T$ and Lemma~\ref{AndAJndJnew} imply that there are $x_n\in\dom(T_n)$, $n\in\N$, with
$\|x_n\|=1$, $\|x_n-x\|\to 0$, $\|T_nx_n-Tx\|\to 0$.
Hence there exists $n_{\lm}\in\N$ such that, for all $ n\geq n_{\lm}$,
$$\|(T_n-\lm)x_n\|\leq \|(T-\lm)x\|+\|T_nx_n-Tx\|+|\lm|\|x_n-x\|<\eps.$$
Therefore, $\lm\in\sigma_{{\rm app},\eps}(T_n)\subset\sigma_{\eps}(T_n)$ for all $n\geq n_{\lm}$.

ii)
By Lemma~\ref{lemma.eps.T}~ii), $\sigma_{\eps}(T)=\sigma_{{\rm app},\eps}(T)\cup \sigma_{{\rm app},\eps}(T^*)^*$.
Now the assertion follows from claim~i) and the assumptions  $T_n\gsr T$,  $T_n^*\gsr T^*$.
\end{proof}

Now we confine the set of pseudospectral pollution.

\begin{prop}\label{thmexactness2.prelim}
Suppose that $T_n\gsr T$ and $T_n^*\gsr T^*$.
Let $\lm\in\rho(T)\cap\Delta_b\big((T_n)_{n\in\N}\big)$ and let $\lm_n\in\C$, $n\in\N$, satisfy $\lm_n\to\lm$, $n\to\infty$.
Then
$$M:=\limsup_{n\to\infty}\|(T_n-\lm_n)^{-1}\|=\limsup_{n\to\infty}\|(T_n-\lm)^{-1}\|\geq \|(T-\lm)^{-1}\|;$$
if the inequality is strict, then
$$\lm\in \Lambda_{{\rm ess},\frac{1}{M}}\big((T_n)_{n\in\N}\big)\cap  \Lambda_{{\rm ess},\frac{1}{M}}\big((T_n^*)_{n\in\N}\big)^*.$$
\end{prop}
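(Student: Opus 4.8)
The plan is to split the statement into three assertions: (a) the two $\limsup$'s over $\|(T_n-\lm_n)^{-1}\|$ and $\|(T_n-\lm)^{-1}\|$ agree; (b) this common value $M$ dominates $\|(T-\lm)^{-1}\|$; (c) if $M > \|(T-\lm)^{-1}\|$ strictly, then $\lm$ belongs to both limiting essential $\tfrac1M$-near spectra. For (a), since $\lm\in\Delta_b\big((T_n)_{n\in\N}\big)$, there is $n_0$ with $\lm\in\rho(T_n)$ and $\sup_{n\geq n_0}\|(T_n-\lm)^{-1}\|=:C<\infty$; then for $n$ large enough that $|\lm_n-\lm|\,C<1$ a Neumann series gives $\lm_n\in\rho(T_n)$ with
$$\|(T_n-\lm_n)^{-1}-(T_n-\lm)^{-1}\|\leq \frac{|\lm_n-\lm|\,C^2}{1-|\lm_n-\lm|\,C}\tolong 0,$$
so the two sequences of norms have the same $\limsup$. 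For (b), I would use $\lm\in\rho(T)$ together with $T_n\gsr T$: by \cite[Proposition~2.16~ii)]{Boegli-chap1} (used already in the proof of Proposition~\ref{proplimsigmaess}) one has $(T_n-\lm)^{-1}P_n\s (T-\lm)^{-1}P$, and strong convergence only increases the norm in the limit, i.e.\ $\|(T-\lm)^{-1}P\|\leq\liminf_n\|(T_n-\lm)^{-1}P_n\|\leq M$; since $\|(T-\lm)^{-1}P\|=\|(T-\lm)^{-1}\|$ this gives the inequality. Note also that $M<\infty$ by (a), so in the strict case $0<1/M<\infty$.

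For (c), the idea is to extract near-maximisers of the resolvent norm and show they are the Weyl-type sequences required by Definition~\ref{def.limiting.eps}. Pass to an infinite $I\subset\N$ along which $\|(T_n-\lm)^{-1}\|\to M$. For each $n\in I$ pick $w_n\in H_n$ with $\|w_n\|=1$ and $\|(T_n-\lm)^{-1}w_n\|\to M$ (possible by definition of operator norm, choosing the approximation error to vanish), and set $x_n:=(T_n-\lm)^{-1}w_n/\|(T_n-\lm)^{-1}w_n\|\in\dom(T_n)$, so $\|x_n\|=1$ and $\|(T_n-\lm)x_n\|=\|(T_n-\lm)^{-1}w_n\|^{-1}\to 1/M=:\delta$. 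It remains to show $x_n\stackrel{w}{\to}0$ along a further subsequence; by weak compactness of $H_0$ assume $x_n\stackrel{w}{\to}x$. Then $(T_n-\lm)x_n=\|(T_n-\lm)^{-1}w_n\|^{-1}w_n$ has bounded norm, so Lemma~\ref{lemmaClaim} (which needs $T_n^*\gsr T^*$) gives $(T_n-\lm)x_n\stackrel{w}{\to}(\text{something})$; more precisely, since $(T_n-\lm)^{-1}P_n\s(T-\lm)^{-1}P$ and Lemma~\ref{lemmadisccompletelycont}~i) applies to $w_n$, one finds $x_n=(T_n-\lm)^{-1}w_n/\|\cdot\|\stackrel{w}{\to}$ a limit of the form $(T-\lm)^{-1}w/\delta$ where $w_n\stackrel{w}{\to}w$. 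If $x\neq 0$, then $\|(T-\lm)x\|\leq\liminf\|(T_n-\lm)x_n\|=\delta=1/M$ by weak lower semicontinuity, while $\|(T-\lm)^{-1}\|\cdot\|(T-\lm)x\|\geq\|x\|=1$ forces $\|(T-\lm)^{-1}\|\geq M$, contradicting strictness. Hence $x=0$, which shows $\lm\in\Lambda_{{\rm ess},1/M}\big((T_n)_{n\in\N}\big)$. The statement for $(T_n^*)_{n\in\N}$ follows by the symmetric argument, starting from $\|(T_n-\lm)^{-1}\|=\|(T_n^*-\overline\lm)^{-1}\|$ and using the hypothesis $T_n^*\gsr T^*$ together with $T_n\gsr T$ for the weak-convergence step; here one applies the same reasoning to the adjoints, taking $v_n\in H_n$ with $\|(T_n-\lm)^{-1*}v_n\|\to M$.

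\textbf{Main obstacle.} The delicate point is the weak-to-zero claim for the Weyl sequence $x_n$ (and its adjoint counterpart): one must rule out a nonzero weak limit, and this is exactly where the strictness $M>\|(T-\lm)^{-1}\|$ is consumed, via weak lower semicontinuity of $\|(T-\lm)(\cdot)\|$ on $\dom(T)$ combined with the elementary bound $\|(T-\lm)^{-1}\|\,\|(T-\lm)x\|\geq\|x\|$. Making the weak-convergence bookkeeping precise — keeping track of $w_n\stackrel{w}{\to}w$, applying Lemma~\ref{lemmadisccompletelycont}~i) to identify the weak limit of $(T_n-\lm)^{-1}w_n$, and invoking Lemma~\ref{lemmaClaim} to control $(T_n-\lm)x_n$ — along nested subsequences is the part that needs care; everything else is Neumann series and semicontinuity.
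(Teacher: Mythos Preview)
Your parts (a) and (b) are fine and match the paper's argument (the paper phrases (b) via Theorem~\ref{thm.inclusion.prelim}, but the content is the same lower-semicontinuity under strong convergence).

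Part (c), however, has a genuine gap. You write ``$\|(T-\lm)^{-1}\|\cdot\|(T-\lm)x\|\geq\|x\|=1$'', but weak convergence $x_n\stackrel{w}{\to}x$ only gives $\|x\|\leq 1$, not $\|x\|=1$. In fact the information you have forces $\|x\|<1$: with $w_n\stackrel{w}{\to}w$ and Lemma~\ref{lemmadisccompletelycont}~i) you get $x=(T-\lm)^{-1}w/M$, whence
\[
M\|x\|=\|(T-\lm)^{-1}w\|\leq\|(T-\lm)^{-1}\|\,\|w\|<M\|w\|\leq M,
\]
so $\|x\|<\|w\|\leq 1$. The inequality $\|(T-\lm)^{-1}\|\geq \|x\|/\|(T-\lm)x\|=M\|x\|/\|w\|$ then yields nothing contradictory. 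One can try to subtract off a strong approximant of $x$ (via Lemma~\ref{AndAJndJnew}) to get a genuinely weak-null sequence $z_n$, but a short computation shows $\|(T_n-\lm)z_n\|^2/\|z_n\|^2\to\big(1/M^2-\|(T-\lm)x\|^2\big)/(1-\|x\|^2)$, which is strictly \emph{less} than $1/M^2$ precisely because $\|(T-\lm)x\|>\|x\|/M$; so you land in the wrong $\Lambda_{{\rm ess},\delta'}$.

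The paper avoids this by passing to the selfadjoint products $R_n:=(T_n^*-\overline{\lm})^{-1}(T_n-\lm)^{-1}$. Selfadjointness gives $\|(T_n-\lm)^{-1}\|^2=\max\sigma(R_n)\in\sigma_{\rm app}(R_n)$, so $M^2\in\sigma_{\rm app}\big((R_n)_n\big)$; Proposition~\ref{lemmasigmaappofTn}~ii) then yields the clean dichotomy $M^2\in\sigma_p\big((T^*-\overline{\lm})^{-1}(T-\lm)^{-1}\big)\cup\sigma_{\rm ess}\big((R_n)_n\big)$. The first alternative contradicts strictness, and in the second alternative the mapping theorem (Theorem~\ref{mappingthm}) produces a weak-null sequence with $\|(T_n^*-\overline{\lm})x_n\|\to 1/M$ directly. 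The selfadjoint reduction is exactly what replaces your unjustified ``$\|x\|=1$''.
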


\begin{proof}
First we prove that 
\beq\label{eq.M}
M=\limsup_{n\to\infty}\|(T_n-\lm_n)^{-1}\|=\limsup_{n\to\infty}\|(T_n-\lm)^{-1}\|.
\eeq
Since  $\lm\in\Delta_b\big((T_n)_{n\in\N}\big)$, we have $C:=\sup_{n\in\N}\|(T_n-\lm)^{-1}\|<\infty$.
A Neumann series argument yields that, for all $n\in \N$ so large that $|\lm_n-\lm|<1/C$,
$$\|(T_n-\lm_n)^{-1}\|=\big\|(T_n-\lm)^{-1}(I-(\lm_n-\lm)(T_n-\lm)^{-1})^{-1}\big\|\leq \frac{C}{1-|\lm_n-\lm|C}.$$
The first resolvent identity implies
\begin{align*}
\big|\|(T_n-\lm)^{-1}\|-\|(T_n-\lm_n)^{-1}\|\big|
&\leq |\lm-\lm_n|\|(T_n-\lm_n)^{-1}\|\|(T_n-\lm)^{-1}\|\\
&\leq  |\lm-\lm_n|\frac{C^2}{1-|\lm_n-\lm| C}.
\end{align*}
The right hand side converges to $0$ since $\lm_n\to\lm$. This proves~\eqref{eq.M}.

The inequality $$\limsup_{n\to\infty}\|(T_n-\lm)^{-1}\|\geq \|(T-\lm)^{-1}\|$$
follows from Theorem~\ref{thm.inclusion.prelim}~ii).

Now assume that $M>\|(T-\lm)^{-1}\|$.
First note that $ (T_n^*-\overline{\lm})^{-1}(T_n-\lm)^{-1}$ is selfadjoint and
$$\|(T_n-\lm)^{-1}\|^2=\sup_{\|y\|=1}\langle (T_n^*-\overline{\lm})^{-1}(T_n-\lm)^{-1}y,y\rangle=\max\,\sigma_{\rm app}\big( (T_n^*-\overline{\lm})^{-1}(T_n-\lm)^{-1}\big).$$
Therefore, $$M\in\sigma_{\rm app}\big(\big((T_n^*-\overline{\lm})^{-1}(T_n-\lm)^{-1}\big)_{n\in\N}\big).$$
By the assumptions $T_n\gsr T$, $T_n^*\gsr T^*$ and $\lm\in\Delta_b\big((T_n)_{n\in\N}\big)\cap\rho(T)$, we obtain, using~\cite[Proposition~2.16]{Boegli-chap1},
$$ (T_n^*-\overline{\lm})^{-1}(T_n-\lm)^{-1}P_n\slong  (T^*-\overline{\lm})^{-1}(T-\lm)^{-1}P, \quad n\to\infty.$$ 
Moreover, \cite[Lemma~3.2]{Boegli-chap1} yields $(T_n^*-\overline{\lm})^{-1}(T_n-\lm)^{-1}\gsr  (T^*-\overline{\lm})^{-1}(T-\lm)^{-1}$.
Now Proposition~\ref{lemmasigmaappofTn}~ii) implies that 
$$M^2\in\sigma_p\big((T^*-\overline{\lm})^{-1}(T-\lm)^{-1}\big)\cup\sigma_{\rm ess} \big(\big((T_n^*-\overline{\lm})^{-1}(T_n-\lm)^{-1}\big)_{n\in\N}\big).$$

\emph{First case}: If $M^2\in\sigma_p\big((T^*-\overline{\lm})^{-1}(T-\lm)^{-1}\big)$, then there exists $y\in H$ with $\|y\|=1$ such that 
$$0=\big\langle \big((T^*-\overline{\lm})^{-1}(T-\lm)^{-1}-M^2\big)y,y\big\rangle=\|(T-\lm)^{-1}y\|^2-M^2.$$
So we arrive at the contradiction $\|(T-\lm)^{-1}\|\geq M$.

\emph{Second case}:
If $M^2\in\sigma_{\rm ess} \big(\big((T_n^*-\overline{\lm})^{-1}(T_n-\lm)^{-1}\big)_{n\in\N}\big)$,
then  the mapping result in Theorem~\ref{mappingthm}
implies that $1/M^2\in \sigma_{\rm ess} \big(((T_n-\lm)(T_n^*-\overline{\lm}))_{n\in\N}\big).$
Hence there exist an infinite subset $I\subset\N$ and $x_n\in\dom((T_n-\lm)(T_n^*-\lbar{\lm}))\subset \dom(T_n^*)$, $n\in I$, with $\|x_n\|=1$, $x_n\stackrel{w}{\to}0$ and
$\|((T_n-\lm)(T_n^*-\overline{\lm})-1/M^2)x_n\|\to 0$.
So we arrive at
$$\|(T_n^*-\overline{\lm})x_n\|^2=\langle (T_n-\lm)(T_n^*-\overline{\lm})x_n,x_n\rangle\tolong \frac{1}{M^2}, \quad n\in I, \quad n\to\infty.$$
This implies $\lm\in  \Lambda_{{\rm ess},1/M}\big((T_n^*)_{n\in\N}\big)^*$.

Since $\|(T-\lm)^{-1}\|=\|(T^*-\overline{\lm})^{-1}\|$ and  $\|(T_n-\lm)^{-1}\|=\|(T_n^*-\overline{\lm})^{-1}\|$, we obtain analogously that $\lm\in  \Lambda_{{\rm ess},1/M}\big((T_n)_{n\in\N}\big)$.
\end{proof}

Next we prove the $\eps$-pseudospectral exactness result.

\begin{proof}[Proof of Theorem~{\rm\ref{thmexactness2}}]
i) Let $\lm\in\overline{\sigma_{\eps}(T)}$. Assume that the claim is false, i.e.\ 
\beq\label{eq.alpha}
\alpha:=\limsup_{n\to\infty}{\rm dist}(\lm,\overline{\sigma_{\eps}(T_n)})>0.
\eeq
Choose $\widetilde\lm\in\sigma_{\eps}(T)$ with $|\lm-\widetilde\lm|<\alpha/2$. 
By Theorem~\ref{thm.inclusion.prelim}~ii),
there exists $n_{\widetilde\lm}\in\N$ such that $\widetilde\lm\in\sigma_{\eps}(T_n)\subset\overline{\sigma_{\eps}(T_n)}$, $n\geq n_{\widetilde\lm}$, which is a contradiction to~\eqref{eq.alpha}. 

ii) Choose $\lm\in\C\backslash\overline{\sigma_{\eps}(T)}$ outside the set in~\eqref{eq.defKgeneral} or~\eqref{eq.defK2}, respectively. Assume that it is a point of $\eps$-pseudospectral pollution, i.e.\  there exist an infinite subset $I\subset\N$  and $\lm_n\in\overline{\sigma_{\eps}(T_n)}$, $n\in I$, with $\lm_n\to\lm$. 
By the choice of $\lm$ and  Proposition~\ref{thmregionofbddvsseqessspectrum}~i), we arrive at  $\lm\in\rho(T)\cap\Delta_b\big((T_n)_{n\in\N}\big)$. 

Since $\lm_n\in\overline{\sigma_{\eps}(T_n)}$, $n\in I$, we have $$M:=\limsup_{n\in I\atop n\to\infty}\|(T_n-\lm_n)^{-1}\|\geq\frac{1}{\eps}.$$
Now, by Proposition~\ref{thmexactness2.prelim} and using $\lm\notin\Lambda_{{\rm ess},(0,\eps]}$, we conclude that $\|(T-\lm)^{-1}\|=1/{\eps}$. By Theorem~\ref{thmconstresnorm}~ii), the level set $\{\lm\in\rho(T):\,\|(T-\lm)^{-1}\|=1/\eps\}$ does not have an open subset. Hence we arrive at the contradiction $\lm\in\overline{\sigma_{\eps}(T)}$, which proves the claim.

iii) 
The proof is similar to the one of Theorem~\ref{mainthmspectralexactness}~iii).
Assume that the claim is false. Then there exist $\alpha>0$, an infinite subset $I\subset\N$ and $\lm_n\in K$, $n\in I$, such that one of the following holds:
\begin{enumerate}
\item[\rm(1)]  $\lm_n\in\overline{\sigma_{\eps}(T_n)}$ and ${\rm dist}(\lm_n,\overline{\sigma_{\eps}(T)}\cap K)>\alpha$ for every $n\in I$;
\item[\rm(2)]  $\lm_n\in\overline{\sigma_{\eps}(T)}$ and ${\rm dist}(\lm_n,\overline{\sigma_{\eps}(T_n)}\cap K)>\alpha$ for every $n\in I$.
\end{enumerate}
Note that, in both cases (1) and (2), the compactness of $K$ implies that there exist $\lm\in K$ and an infinite subset $J\subset I$ such that $(\lm_n)_{n\in J}$ converges to $\lm$.

First we consider case (1).
Claim~ii) and the assumptions on $K$ imply that $\lm\in\overline{\sigma_{\eps}(T)}\cap K$ and hence 
$|\lm_n-\lm|\geq {\rm dist}(\lm_n,\overline{\sigma_{\eps}(T)}\cap K)>\alpha$, $n\in J$, a contradiction to $\lm_n\to\lm$.

Now assume that (2) holds. The assumption $\overline{\sigma_{\eps}(T)}\cap K=\overline{\sigma_{\eps}(T)\cap K}$
implies that $(\lm_n)_{n\in J}\subset \overline{\sigma_{\eps}(T)\cap K}$ and thus $\lm\in\overline{\sigma_{\eps}(T)\cap K}$.
Choose $\widetilde\lm\in\sigma_{\eps}(T)\cap K$ with $|\lm-\widetilde\lm|<\alpha/2$. 
By Theorem~\ref{thm.inclusion.prelim}~ii), there exists $n_{\widetilde\lm}\in\N$ such that $\widetilde\lm\in\sigma_{\eps}(T_n)\cap K$ for every $n\geq n_{\widetilde \lm}$. Therefore 
$|\lm_n-\widetilde\lm|\geq {\rm dist}(\lm_n,\overline{\sigma_{\eps}(T_n)}\cap K)>\alpha$ for every $n\in J$ with $n\geq n_{\widetilde\lm}$. Since $|\lm_n-\widetilde\lm|\to |\lm-\widetilde\lm|<\alpha/2$, we arrive at a contradiction.
This proves the claim.
\end{proof}

Finally we prove the result about operators that have constant resolvent norm on an open set.

\begin{proof}[Proof of Theorem{\rm~\ref{thmconstresnorm}}]
i)
Let $\lm_0\in U$.
By proceeding as in the proof of Theorem~\ref{thmexactness2} (with $T_n=T$, $\lm_n=\lm=\lm_0$, $n\in\N$, and $M=1/\eps$), we obtain $$\frac{1}{\eps^2}\in\sigma_p((T^*-\overline{\lm_0})^{-1}(T-\lm_0)^{-1})\cup\sigma_{\rm ess}((T^*-\overline{\lm_0})^{-1}(T-\lm_0)^{-1}),$$
and the second case $1/\eps^2\in \sigma_{\rm ess}((T^*-\overline{\lm_0})^{-1}(T-\lm_0)^{-1})$ implies $\lm_0\in\Lambda_{{\rm ess},\eps}(T^*)^*$.
If however $1/\eps^2\in \sigma_p((T^*-\overline{\lm_0})^{-1}(T-\lm_0)^{-1})$, then there exists $y\in H$ with $\|y\|=1$ such that $\|(T-\lm)^{-1}y\|=1/\eps$. 
Hence $x:=(T-\lm)^{-1}y\neq 0$ satisfies $\|(T-\lm)x\|/\|x\|=\eps$.
Note that $\mu\mapsto\|(T-\mu)x\|$ is a non-constant subharmonic function on $\C$ and thus satisfies the maximum principle. Therefore, in every open neighbourhood of $\lm_0$ there exist points $\mu$ such that $\|(T-\mu)x\|/\|x\|<\eps$ and thus  $\lm_0\in\overline{\sigma_{\eps}(T)}$, a contradiction.

Since $\|(T-\lm)^{-1}\|=\|(T^*-\overline\lm)^{-1}\|=1/\eps$ for every $\lm\in U$, we analogously obtain $\lm_0\in\Lambda_{{\rm ess},\eps}(T)$.
So there exists a sequence $(x_n)_{n\in\N}\subset\dom(T)$ with $\|x_n\|=1$, $x_n\stackrel{w}{\to}0$ and $\|(T-\lm_0)x_n\|\to\eps$.
Define $$e_n:=\frac{(T-\lm_0)x_n}{\|(T-\lm_0)x_n\|}, \quad n\in\N.$$
 Then $\|e_n\|=1$ and  $e_n\stackrel{w}{\to}0$ by Lemma~\ref{lemmaClaim} applied to $T_n=T$.
In addition, $\|(T-\lm_0)^{-1}e_n\|\to \|(T-\lm_0)^{-1}\|= 1/\eps$.
Analogously as in the proof of~\cite[Theorem~3.2]{Boegli-2014-80}, using the old results~ \cite[Lemmas~1.1,3.0]{Globevnik-1974-15} by Globevnik and Vidav, one may show that 
\beq \label{eq.res.squared}
(T-\lm_0)^{-2}e_n\tolong 0, \quad n\to\infty;
\eeq
note that ~\cite[Theorem~3.2]{Boegli-2014-80} was proved for complex uniformly convex Banach spaces and is thus, in particular, valid for Hilbert spaces (see~\cite{Globevnik-1975-47} for a discussion about complex uniform convexity).

Let $\lm\in\C\backslash\underset{K \text{ compact}}{\bigcap}\sigma(T+K).$ 
Then there exists a compact operator $K\in L(H)$ such that $\lm\in\rho(T+K)$.
The second resolvent identity applied twice yields
\begin{align*}
&(T+K-\lm)^{-1}-(T-\lm_0)^{-1}\\
&=(T+K-\lm)^{-1}(-K+\lm-\lm_0)(T-\lm_0)^{-1}\\
&=-\big(I+(T+K-\lm)^{-1}(-K+\lm-\lm_0)\big)(T-\lm_0)^{-1}K(T-\lm_0)^{-1}\\
&\quad+(\lm-\lm_0)\big(I+(T+K-\lm)^{-1}(-K+\lm-\lm_0)\big)(T-\lm_0)^{-2}.
\end{align*}
Since $\widetilde K:=-\big(I+(T+K-\lm)^{-1}(-K+\lm-\lm_0)\big)(T-\lm_0)^{-1}K(T-\lm_0)^{-1}$ is compact and hence completely continuous,
the weak convergence $e_n\stackrel{w}{\to}0$ yields $\widetilde K e_n\to 0$.
Using~\eqref{eq.res.squared} in addition, we conclude  $\big((T+K-\lm)^{-1}-(T-\lm_0)^{-1}\big)e_n\to 0$ and hence 
$$\lim_{n\to\infty}\|(T+K-\lm)^{-1}e_n\|=\lim_{n\to\infty}\|(T-\lm_0)^{-1}e_n\|=\frac{1}{\eps}.$$
Now define 
$$w_n:=\frac{(T+K-\lm)^{-1}e_n}{\|(T+K-\lm)^{-1}e_n\|}, \quad n\in\N.$$
Then $\|w_n\|=1$,  $w_n\stackrel{w}{\to}0$ and $\|(T+K-\lm)w_n\|=\|(T+K-\lm)^{-1}e_n\|^{-1}\to \eps$.
Therefore, $\lm\in\Lambda_{{\rm ess},\eps}(T+K)$.
Theorem~\ref{prop.pert.pseudo}~i) applied to $T_n=T$ and $B_n=K$ yields $\Lambda_{{\rm ess},\eps}(T+K)=\Lambda_{{\rm ess},\eps}(T)$. So arrive at $\lm\in\Lambda_{{\rm ess},\eps}(T)$.

In an analogous way as for~\eqref{eq.res.squared}, one may show that there exists a normalised sequence $(f_n)_{n\in\N}\subset H$ with $f_n\stackrel{w}{\to}0$ and $(T^*-\overline{\lm_0})^{-2}f_n\to 0$. 
So, by proceeding as above, we obtain $\lm\in\Lambda_{{\rm ess},\eps}(T^*)^*$.

ii) The claim follows from claim~i) and Proposition~\ref{proplimsigmaess}~i).
\end{proof}

\section{Applications and Examples}\label{sectionapplications}
In this section we discuss applications  to the Galerkin method for infinite matrices (Subsection~\ref{subsectionmatrices}) and to the domain truncation method for differential operators (Subsection~\ref{subsectionPDE}).

\subsection{Galerkin approximation of block-diagonally dominant matrices}\label{subsectionmatrices}
In this subsection we consider an operator $A$ in $l^2(\K)$ (where $\K=\N$ or $\K=\Z$) whose matrix representation (identified with $A$) with respect to the standard orthonormal basis $\{e_j:\,j\in\K\}$
can be split as $A=T+S$. 
Here $T$ is block-diagonal, i.e.\ there exist  $m_k\in\N$ with  
$$T={\rm diag}(T_k:\,k\in\K), \quad T_k\in\C^{m_k\times m_k}.$$
We further assume that $\dom(T)\subset\dom(S)$, $\dom(T^*)\subset\dom(S^*)$ and that there exists $\lm_0\in\rho(T)$ such that $S(T-\lm_0)^{-1}$ is compact and
\beq\label{eq.relbd.blockdiag}\|S(T-\lm_0)^{-1}\|<1, \quad \|S^*(T^*-\overline{\lm_0})^{-1}\|<1.\eeq
Define, for $n\in\N$, 
$$j_n:=\begin{cases}-\sum\limits_{k=-n}^{0} m_k, &\K=\Z,\\ 1, &\K=\N,\end{cases},\qquad J_n:=\sum_{k=1}^n m_k.$$ 
Let $P_n$ be the orthogonal projection of $l^2(\K)$ onto $H_n:={\rm span}\{e_j:\,j_n\leq j\leq J_n\}$.
It is easy to see that $P_n\s I$.

\begin{theorem}\label{thmblockdiag}
Define $A_n:=P_nA|_{H_n}$, $n\in\N$. 
\begin{enumerate}
\item[\rm i)] We have  $A_n\gsr A$ and $A_n^*\gsr A^*$.

\item[\rm ii)] The limiting essential spectra satisfy 
\begin{align*}
&\sigma_{\rm ess}\big((A_n)_{n\in\N}\big)\cup \sigma_{\rm ess}\big((A_n^*)_{n\in\N}\big)^*\\
&=\{\lm\in\C:\,\exists\, I\subset\N \,\text{with }\|(T_n-\lm)^{-1}\|\to\infty,\,n\in I, \,n\to\infty\}\\
&=\sigma_{\rm ess}(A); 
\end{align*}
hence no spectral pollution occurs for the approximation $(A_n)_{n\in\N}$ of~$A$, and for every isolated $\lm\in\sigma_{\rm dis}(A)$ there exists a sequence of $\lm_n\in\sigma(A_n)$, $n\in\N$, with $\lm_n\to \lm$ as $n\to\infty$.
\item[\rm iii)] The limiting essential $\eps$-near spectrum satisfies
\begin{align*}
&\Lambda_{{\rm ess},\eps}\big((A_n)_{n\in\N}\big)\\
&=\bigg\{\lm\in\C:\,\exists\, I\subset\N \,\text{with }\|(T_n-\lm)^{-1}\|\to\frac{1}{\eps},\,n\in I, \,n\to\infty\bigg\}\\
&=\Lambda_{{\rm ess},\eps}(A);
\end{align*}
hence if $A$ does not have constant resolvent norm {\rm(}$=1/\eps${\rm)} on an open set, then $(A_n)_{n\in\N}$ is an $\eps$-pseudospectrally exact approximation of $A$.
\end{enumerate}
\end{theorem}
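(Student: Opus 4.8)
The plan is to reduce everything to the Galerkin truncations of the block-diagonal part $T$ and then exploit the direct-sum structure. Write $\cT_n:=P_nT|_{H_n}$ and $\cS_n:=P_nS|_{H_n}$, so $A_n=\cT_n+\cS_n$. Since each $H_n$ is a direct sum of coordinate blocks, $\cT_n={\rm diag}(T_k)$ over the relevant blocks, its resolvent is the block-truncation of $(T-\lm_0)^{-1}$ and commutes with $P_n$, and $\sup_n\|(\cT_n-\lm_0)^{-1}\|\le\|(T-\lm_0)^{-1}\|$. For part~i), $\sigma(\cT_n)\subset\sigma(T)$ gives $\lm_0\in\rho(\cT_n)$ and $(\cT_n-\lm_0)^{-1}P_n=(T-\lm_0)^{-1}P_n\s(T-\lm_0)^{-1}P$ because $P_n\s I$; hence $\cT_n\gsr T$ and likewise $\cT_n^*\gsr T^*$. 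Since $\cS_n(\cT_n-\lm_0)^{-1}=P_nS(T-\lm_0)^{-1}P_n$, assumption~\eqref{eq.relbd.blockdiag} yields the uniform bound $\sup_n\|\cS_n(\cT_n-\lm_0)^{-1}\|<1$, and compactness of $S(T-\lm_0)^{-1}$ together with $P_n\s I$ gives norm convergence $\cS_n(\cT_n-\lm_0)^{-1}\to S(T-\lm_0)^{-1}$; a Neumann-series argument (cf.\ \cite[Lemma~3.2]{Boegli-chap1}) then upgrades $\cT_n\gsr T$ to $A_n\gsr A$, and the $S^*$-bound in~\eqref{eq.relbd.blockdiag} gives $A_n^*\gsr A^*$ in the same way.

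For part~ii) the first step is the perturbation reduction: $(\cT_n-\lm_0)^{-1}-(A_n-\lm_0)^{-1}=(A_n-\lm_0)^{-1}\cS_n(\cT_n-\lm_0)^{-1}$, where $\cS_n(\cT_n-\lm_0)^{-1}=P_nS(T-\lm_0)^{-1}P_n$ is a discretely compact sequence (compact limit, $P_n\s I$; \cite[Lemma~2.8]{Boegli-chap1}), so Theorem~\ref{thmreldisccomppert}~ii) — together with its ``in addition'' clause, whose hypothesis, the strong convergence of $((\cT_n-\lm_0)^{-1}-(A_n-\lm_0)^{-1})P_n$, follows from part~i) — gives
\[
\sigma_{\rm ess}\big((A_n)_{n\in\N}\big)\cup\sigma_{\rm ess}\big((A_n^*)_{n\in\N}\big)^*=\sigma_{\rm ess}\big((\cT_n)_{n\in\N}\big)\cup\sigma_{\rm ess}\big((\cT_n^*)_{n\in\N}\big)^*.
\]
The second step is a direct computation: I would show that $\sigma_{\rm ess}\big((\cT_n)_{n\in\N}\big)$, $\sigma_{\rm ess}\big((\cT_n^*)_{n\in\N}\big)^*$ and $\sigma_{\rm ess}(T)$ all equal $\Sigma:=\{\lm:\exists\,I\subset\N,\ \|(T_n-\lm)^{-1}\|\to\infty,\ n\in I,\ n\to\infty\}$, where $T_n$ denotes the $n$-th diagonal block. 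For ``$\supset$'' one places into the $n$-th coordinate block a unit vector nearly annihilated by $T_n-\lm$ (an eigenvector if $\lm\in\sigma(T_n)$), obtaining along a subsequence of blocks an orthonormal — hence weakly null — approximate eigensequence; for ``$\subset$'' one decomposes a weakly null approximate eigensequence over coordinate blocks and uses $\|(T-\lm)x\|^2=\sum_k\|(T_k-\lm)x_k\|^2\ge\sum_k\|(T_k-\lm)^{-1}\|^{-2}\|x_k\|^2$ together with the escape of mass to high-index blocks to force the block resolvent norms to diverge along a subsequence; the adjoint version uses $\|(T_k^*-\overline{\lm})^{-1}\|=\|(T_k-\lm)^{-1}\|$, and $\sigma_{\rm ess}(T)=\sigma_{\rm ess}(A)$ because $S(T-\lm_0)^{-1}$ compact with norm $<1$ annihilates bounded weakly null sequences in the limit, so singular sequences for $T-\lm$ and for $A-\lm$ coincide. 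The stated consequences then follow from Theorem~\ref{mainthmspectralexactness}~i), since $\sigma_{\rm dis}(A)$ lies outside $\Sigma$, which is the set in~\eqref{eqbadset}.

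Part~iii) follows the same pattern with Theorem~\ref{prop.pert.pseudo}~ii) in place of Theorem~\ref{thmreldisccomppert}~ii): its hypotheses (a)--(c) are exactly the uniform bound $\|\cS_n(\cT_n-\lm_0)^{-1}\|\le\|S(T-\lm_0)^{-1}\|<1$, the discrete compactness of $\big(\cS_n(\cT_n-\lm_0)^{-1}\big)_{n\in\N}$, and the strong (indeed norm) convergences inherited from $P_n\s I$ and compactness, so $\Lambda_{{\rm ess},\eps}\big((A_n)_{n\in\N}\big)=\Lambda_{{\rm ess},\eps}\big((\cT_n)_{n\in\N}\big)$ and $\Lambda_{{\rm ess},\eps}(A)=\Lambda_{{\rm ess},\eps}(T)$. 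One then identifies both with $\{\lm:\exists\,I\subset\N,\ \|(T_n-\lm)^{-1}\|\to1/\eps,\ n\in I,\ n\to\infty\}$ by the same block-localisation: concentrating a weakly null unit sequence on single coordinate blocks $k_j\to\infty$ turns the requirement $\|(T-\lm)x_n\|\to\eps$ into the requirement that the least singular value $1/\|(T_{k_j}-\lm)^{-1}\|$ of $T_{k_j}-\lm$ tend to $\eps$ (the connected range of $v\mapsto\|(T_{k_j}-\lm)v\|$ on the unit sphere lets one pin the limit exactly to $\eps$), while the inequality $\|(T-\lm)x\|^2\ge\sum_k\|(T_k-\lm)^{-1}\|^{-2}\|x_k\|^2$ excludes any other $\lm$. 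Finally, if $A$ has no open set of constant resolvent norm $1/\eps$, then — arguing as in the Remark after Theorem~\ref{thmexactness2}, so that $\Lambda_{{\rm ess},\delta}(A)\subset\overline{\sigma_{\eps}(A)}$ for all $\delta\le\eps$ and hence $\Lambda_{{\rm ess},(0,\eps]}\subset\overline{\sigma_{\eps}(A)}$ — the set~\eqref{eq.defKgeneral} of possible $\eps$-pseudospectral pollution is contained in $\overline{\sigma_{\eps}(A)}$, so Theorem~\ref{thmexactness2}~i),~ii) give $\eps$-pseudospectral exactness.

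The main obstacle is the block-diagonal identification in parts~ii) and~iii): proving that the (limiting) essential and essential-$\eps$-near spectra of the truncations collapse to the sets described purely through the individual block resolvent norms. The delicate direction is ``$\subset$'': one must select, by a pigeonhole argument on which coordinate blocks carry the escaping mass of a weakly null (approximate) singular sequence, a subsequence of blocks along which $\|(T_k-\lm)^{-1}\|$ diverges (resp.\ converges to $1/\eps$); in the $\eps$-near case one additionally needs the intermediate-value property of $v\mapsto\|(T_k-\lm)v\|$ on each finite-dimensional block to recover the limit $\eps$ exactly rather than merely a value $\le\eps$.
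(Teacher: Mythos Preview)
Your approach is essentially the paper's: reduce from $A$ to the block-diagonal $T$ via the perturbation Theorems~\ref{thmreldisccomppert}/\ref{prop.pert.pseudo}, verifying their hypotheses from the identity $\cS_n(\cT_n-\lm_0)^{-1}=P_nS(T-\lm_0)^{-1}|_{H_n}$ and compactness of $S(T-\lm_0)^{-1}$, then compute the limiting spectra for the block-diagonal truncations directly. The paper packages part~i) slightly differently, obtaining $A_n^*\gsr A^*$ as a by-product of Theorem~\ref{prop.pert.pseudo}~ii) (which also delivers the $\sigma_{\rm ess}$ and $\Lambda_{{\rm ess},\eps}$ reductions in one stroke), whereas you derive it by a symmetric Neumann argument; both are valid. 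Your sketch of the block-diagonal computation for $\sigma_{\rm ess}$ in~ii) is correct and fills in detail that the paper omits.

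One genuine weak spot is your ``$\subset$'' step in the block identification for $\Lambda_{{\rm ess},\eps}$ in~iii). The inequality $\|(T-\lm)x\|^2\ge\sum_k\|(T_k-\lm)^{-1}\|^{-2}\|x_k\|^2$ gives only a \emph{lower} bound on $\|(T-\lm)x_n\|$; combined with mass escape it rules out $\lm$ with $\limsup_{|k|\to\infty}\|(T_k-\lm)^{-1}\|<1/\eps$, but it does \emph{not} rule out $\lm$ for which $1/\eps$ lies strictly between two subsequential limits of the block resolvent norms. For instance, with scalar blocks alternating between $\eps_1<\eps_2$ one has $0\in\Lambda_{{\rm ess},\eps}(T)$ for every $\eps\in[\eps_1,\eps_2]$ (mix mass across adjacent blocks), so the middle characterisation as stated is too small. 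The paper does not spell this step out either. For the $\eps$-pseudospectral exactness conclusion, however, only the equality $\Lambda_{{\rm ess},\eps}\big((A_n)_{n\in\N}\big)=\Lambda_{{\rm ess},\eps}(A)$ and the inclusion $\Lambda_{{\rm ess},\eps}(A)\subset\overline{\sigma_{\eps}(A)}$ are needed; the former follows directly from Theorem~\ref{prop.pert.pseudo}~ii),~iii) together with $\Lambda_{{\rm ess},\eps}\big((\cT_n)_{n\in\N}\big)=\Lambda_{{\rm ess},\eps}(T)$ (use Proposition~\ref{proplimsigmaess}~i) for ``$\supset$'' and $\cT_nx=Tx$ on $H_n$ for ``$\subset$''), bypassing the block-resolvent characterisation entirely.
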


\begin{proof}
First note that the adjoint operators satisfy $A_n^*=T_n^*+S_n^*$, and since, by~\eqref{eq.relbd.blockdiag}, $S$ is $T$-bounded and $S^*$ is $T^*$-bounded with  relative bounds $<1$, \cite[Corollary~1]{Hess-Kato} implies $A^*=T^*+S^*$.
In addition, for any $n\in\N$,
\beq\label{eq.prodconv}
\begin{aligned}
(T_n-\lm_0)^{-1}P_n&=P_n(T-\lm_0)^{-1}, \\
 (T_n^*-\overline{\lm_0})^{-1}P_n&=P_n(T^*-\overline{\lm_0})^{-1},\\
S_n(T_n-\lm_0)^{-1}P_n&=P_nS(T-\lm_0)^{-1}, \\
 S_n^*(T_n^*-\overline{\lm_0})^{-1}P_n&=P_nS^*(T^*-\overline{\lm_0})^{-1},\\
(S_n(T_n-\lm_0)^{-1})^*P_n|_{\dom(S^*)}&=P_n(T^*-\overline{\lm_0})^{-1}S^*=P_n(S(T-\lm_0)^{-1})^*|_{\dom(S^*)}. 
\end{aligned}
\eeq
Now, using~\eqref{eq.prodconv} everywhere, we check that the assumptions of Theorem~\ref{prop.pert.pseudo}~ii),~iii) are satisfied. 
\begin{enumerate}
\item[\rm(a)]
We readily conclude
\beq\label{eq.prodest}
\|S_n(T_n-\lm_0)^{-1}\|\leq \|S(T-\lm_0)^{-1}\|<1.
\eeq

\item[\rm(b)] 
The sequence of operators $S_n(T_n-\lm_0)^{-1}=P_nS(T-\lm_0)^{-1}|_{H_n}$, $n\in\N$, is discretely compact since $S(T-\lm_0)^{-1}$ is compact and $P_n\s I$. 

\item[\rm(c)]
The strong convergence  $(T_n^*-\overline{\lm_0})^{-1}P_n\s (T^*-\overline{\lm_0})^{-1}$ follows immediately from $P_n\s I$.
In addition, since $\dom(S^*)$ is a dense subset, using~\eqref{eq.prodest} we obtain $(S_n(T_n-\lm_0)^{-1})^*P_n\s (S(T-\lm_0)^{-1})^*$. 
\end{enumerate}
Now Theorem~\ref{prop.pert.pseudo}~ii),~iii) implies $A_n^*\gsr A^*$ and
\begin{align*}
\sigma_{\rm ess}\big((A_n)_{n\in\N}\big)&=\sigma_{\rm ess}\big((T_n)_{n\in\N}\big),\quad  
\Lambda_{{\rm ess}, \eps}\big((A_n)_{n\in\N}\big)=\Lambda_{{\rm ess}, \eps}\big((T_n)_{n\in\N}\big),\\
 \Lambda_{{\rm ess}, \eps}(A)&=\Lambda_{{\rm ess},\eps}(T).
\end{align*}
In addition, since $S(T-\lm_0)^{-1}$ is assumed to be compact, \cite[Theorem IX.2.1]{edmundsevans} yields $\sigma_{\rm ess}(A)=\sigma_{\rm ess}(T)$.

In claim~i) it is left to be shown that $A_n\gsr A$. To this end, we use that~\eqref{eq.prodconv} and $P_n\s I$ imply
$$(T_n-\lm_0)^{-1}P_n\s (T-\lm_0)^{-1}, \quad S_n(T_n-\lm_0)^{-1}P_n\s S(T-\lm_0)^{-1}.$$
Now the claim follows from~\eqref{eq.prodest} and the perturbation result~\cite[Theorem~3.3]{Boegli-chap1}.

Note that  Theorem~\ref{thmreldisccomppert}~ii) implies $\sigma_{\rm ess}\big((A_n^*)_{n\in\N}\big)^*=\sigma_{\rm ess}\big((T_n^*)_{n\in\N}\big)^*$.
The identities in claim~ii) are obtained from 
\begin{align*}
&\sigma_{\rm ess}\big((T_n)_{n\in\N}\big)\cup \sigma_{\rm ess}\big((T_n^*)_{n\in\N}\big)^*\\
&=\{\lm\in\C:\,\exists\, I\subset\N \,\text{with }\|(T_n-\lm)^{-1}\|\to\infty,\,n\in I, \,n\to\infty\}
=\sigma_{\rm ess}(T). 
\end{align*}
Now the local spectral exactness follows from Theorem~\ref{mainthmspectralexactness}.

The assertion in~iii) follows from an analogous reasoning, using Theorem~\ref{thmexactness2};
note that if $T$ does not have constant ($=1/\eps$) resolvent norm on an open set, then $\Lambda_{{\rm ess},\eps}(T)\subset\partial\sigma_{\eps}(T)\subset\overline{\sigma_{\eps}(T)}$ and hence no $\eps$-pseudospectral pollution occurs. 
\end{proof}

\begin{example}
For points $b,d\in\C$ and sequences $(a_j)_{j\in\N},(b_j)_{j\in\N}$, $(c_j)_{j\in\N}$, $(d_j)_{j\in\N}\subset\C$ with
$$|a_j|\tolong\infty,\quad b_j\tolong b, \quad c_j\tolong 0, \quad d_j\tolong d, \quad j\to \infty,$$
define an unbounded operator $A$ in $l^2(\N)$ by
 $$A:=\bmat a_1 & b_1 &&&\\[1mm] c_1 & d_1 & b_2 &&\\[1mm]  & c_2 & a_2 & b_3 & \\[-0.4mm] && c_3 & d_2 & \ddots  \\ &&& \ddots & \ddots   \emat, \quad \dom(A):=\bigg\{(x_j)_{j\in\N}\in l^2(\N):\,\sum_{j\in\N}|a_jx_{2j-1}|^2<\infty\bigg\}.$$
For $n\in\N$ let $P_n$ be the orthogonal projection of $l^2(\N)$ onto the first $2n$ basis vectors, and define $A_n:=P_nA|_{\ran(P_n)}$.
Using Theorem~\ref{thmblockdiag}, we show that $$\sigma_{\rm ess}(A)=\{d\}, \quad \Lambda_{{\rm ess},\eps}(A)=\{\lm\in\C:\,|\lm-d|= \eps\},$$
that every $\lm\in\sigma_{\rm dis}(A)$ is an accumulation point of $\sigma(A_n)$, $n\in\N$, that no spectral pollution occurs and that $(A_n)_{n\in\N}$ is $\eps$-pseudospectrally exact.

To this end, define $$T:={\rm diag}(T_k:\,k\in\N), \quad T_k:=\bmat a_k & b_{2k-1}\\ 0 & d_k\emat, \quad \dom(T):=\dom(A).$$
Then it is easy to check that $S:=A-T$ is $T$-compact and the estimates in~\eqref{eq.relbd.blockdiag} are satisfied for all $\lm_0\in\C$ that are sufficiently far from $\sigma(T)$.
The essential spectrum $\sigma_{\rm ess}(T)$ consists of all accumulation points of $\sigma(T_k)=\{a_k,d_k\}$, $k\in\N$, i.e.\ $\sigma_{\rm ess}(T)=\{d\}$.
To find $\Lambda_{{\rm ess},\eps}(T)$, note that, in the limit $k\to\infty$,
$$\|(T_k-\lm)^{-1}\|=\left\|\bmat (a_k-\lm)^{-1} & -b_{2k-1}(a_k-\lm)^{-1}(d_k-\lm)^{-1}\\ 0 & (d_k-\lm)^{-1}\emat\right\|\tolong \frac{1}{|d-\lm|}.$$ 
This proves $\Lambda_{{\rm ess},\eps}(T)=\{\lm\in\C:\,|\lm-d| = \eps\}$. 
Now the claims follow from  Theorem~\ref{thmblockdiag} and since $\Lambda_{{\rm ess},\eps}(T)$ does not contain an open subset.
\end{example}

The following example is influenced by Shargorodsky's example~\cite[Theorems~3.2, 3.3]{Shargorodsky-2008-40} of an operator with constant resolvent norm on an open set and whose matrix representation is block-diagonal.
Here we perturb a block-diagonal operator with constant resolvent norm on an open set and arrive at an operator whose resolvent norm is also constant on an open set.

\begin{example}
Consider the \emph{neutral delay differential expression} $\tau$ defined by
$$(\tau f)(t):=\e^{\I t}(f''(t)+f''(t+\pi))+\e^{-\I t}f(t).$$
For an extensive treatment of neutral differential equations with delay, see the monograph~\cite{delay} (in particular Chapter~3 for second order equations).
Let $A$ be the realisation of $\tau$ in $L^2(-\pi,\pi)$ with domain 
$$\dom(A):=\bigg\{f\in L^2(-\pi,\pi):\,\begin{array}{l}f,f'\in {\rm AC_{loc}}(-\pi,\pi),\,\tau f\in L^2(-\pi,\pi),\\
f(-\pi)=f(\pi),\,f'(-\pi)=f'(\pi)\end{array}\bigg\},$$
where $f$ is continued $2\pi$-periodically.
With respect to the orthonormal basis $\{e_k:\,k\in\Z\}\subset \dom(A)$ with $e_k(t):=\e^{\I k t}/\sqrt{2\pi}$, the operator $A$ has the matrix representation 
\begin{align*}
A&=\bmat \ddots & \ddots &&&\\[1mm]  & T_{-1} & S_{-1} && \\[1mm] && T_0 & S_0 & \\[1mm] &&& T_1 & \ddots  \\[1mm] &&&& \ddots  \emat, \quad  
T_j=\bmat 0 & 1 \\ a_j & 0 \emat, \quad S_j=\bmat 0 & 0 \\ 1 & 0\emat,\\
 a_j&=8j^2, \quad \dom(A)=\bigg\{((u_j,v_j)^t)_{j\in\Z}:(u_j)_{j\in\Z},(v_j)_{j\in \Z}\in l^2(\Z),\,\sum_{j\in\Z}|a_{j}u_j|^2<\infty\bigg\}.
\end{align*}
We split $A$ to $T:={\rm diag}(T_j:\,j\in\Z)$, $\dom(T):=\dom(A)$, and $S:=A-T$. Note that $S$ on $\dom(S)=l^2(\Z)$ is bounded and $T$-compact and $S^*$ is $T^*$-compact,
but $T$ does not have compact resolvent. 
Next we prove the existence of $\lm_0\in\rho(T)$ such that the estimates in~\eqref{eq.relbd.blockdiag} are satisfied. 
To this end, let $\lm_0\in\I\R\backslash\{0\}$ and estimate
$$\|S(T-\lm_0)^{-1}\|=\sup_{j\in\Z}\|S_j(T_{j+1}-\lm_0)^{-1}\|=\sup_{j\in\Z}\left\|\frac{1}{a_j-\lm_0^2}\bmat 0 & 0\\ \lm_0 & 1 \emat\right\|\leq \frac{1+|\lm_0|}{|\lm_0|^2};$$
one may check that also $\|S^*(T^*-\overline{\lm_0})^{-1}\|\leq (1+|\lm_0|)/|\lm_0|^2$. Hence~\eqref{eq.relbd.blockdiag} is satisfied if $|\lm_0|$ is sufficiently large.

Let $P_n$ denote the orthogonal projection onto $$
H_n:={\rm span}\{e_k:\,k=-(2n),\dots, 2n-1\},$$
and let $A_n$ and $T_n$ denote the respective Galerkin approximations,
$$A_n:=P_nA|_{H_n}, \quad T_n:=P_nT|_{H_n}, \quad n\in\N.$$
Note that $\det(A_n-\lm)=\det(T_n-\lm)$ for every $\lm\in\C$, which implies $\sigma(A_n)=\sigma(T_n)=\big\{ \pm\sqrt{a_j}:\,j=-n,\dots,n\big\}$.
Hence Theorem~\ref{thmblockdiag}~ii) proves $$\sigma(A)=\big\{ \pm\sqrt{a_j}:\,j\in\Z\big\}=\{\pm \sqrt{8}\, j:\,j\in\N_0\}.$$

Now we study the pseudospectra of $A$ and $T$.
\begin{figure}[b]
\begin{center}
 \includegraphics[width=0.5\textwidth]{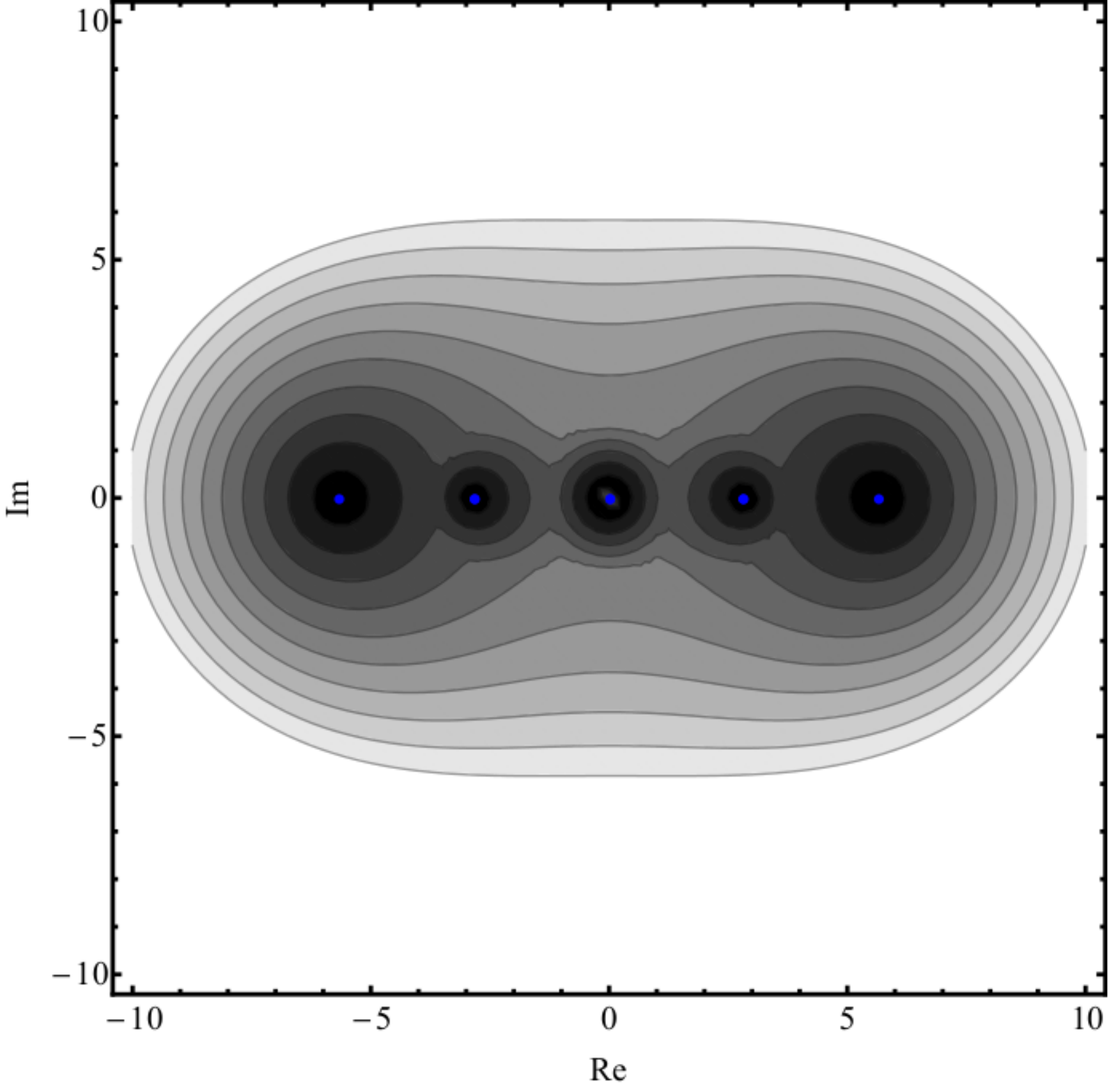}\\
 \includegraphics[width=0.5\textwidth]{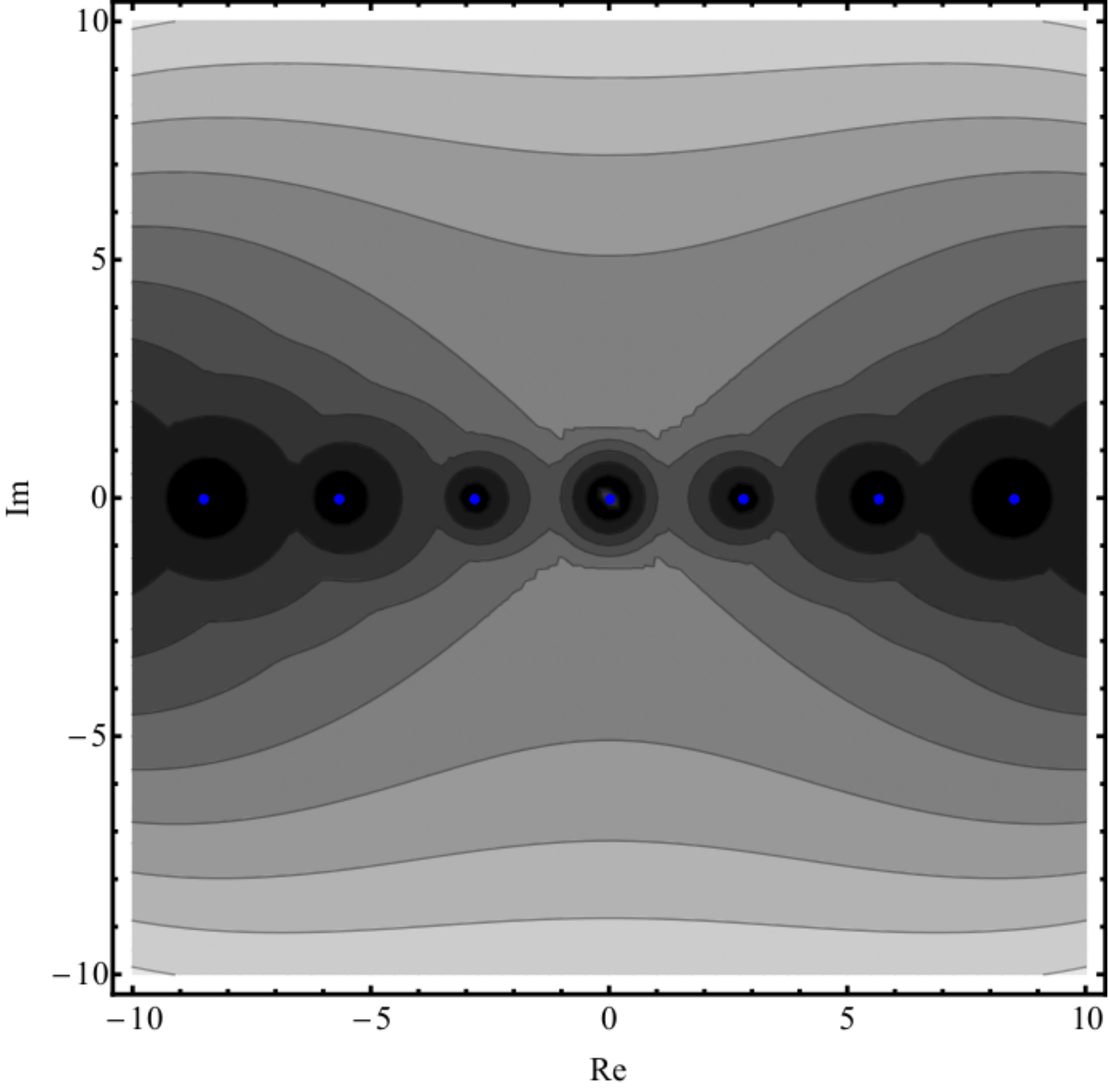}\\
 \includegraphics[width=0.5\textwidth]{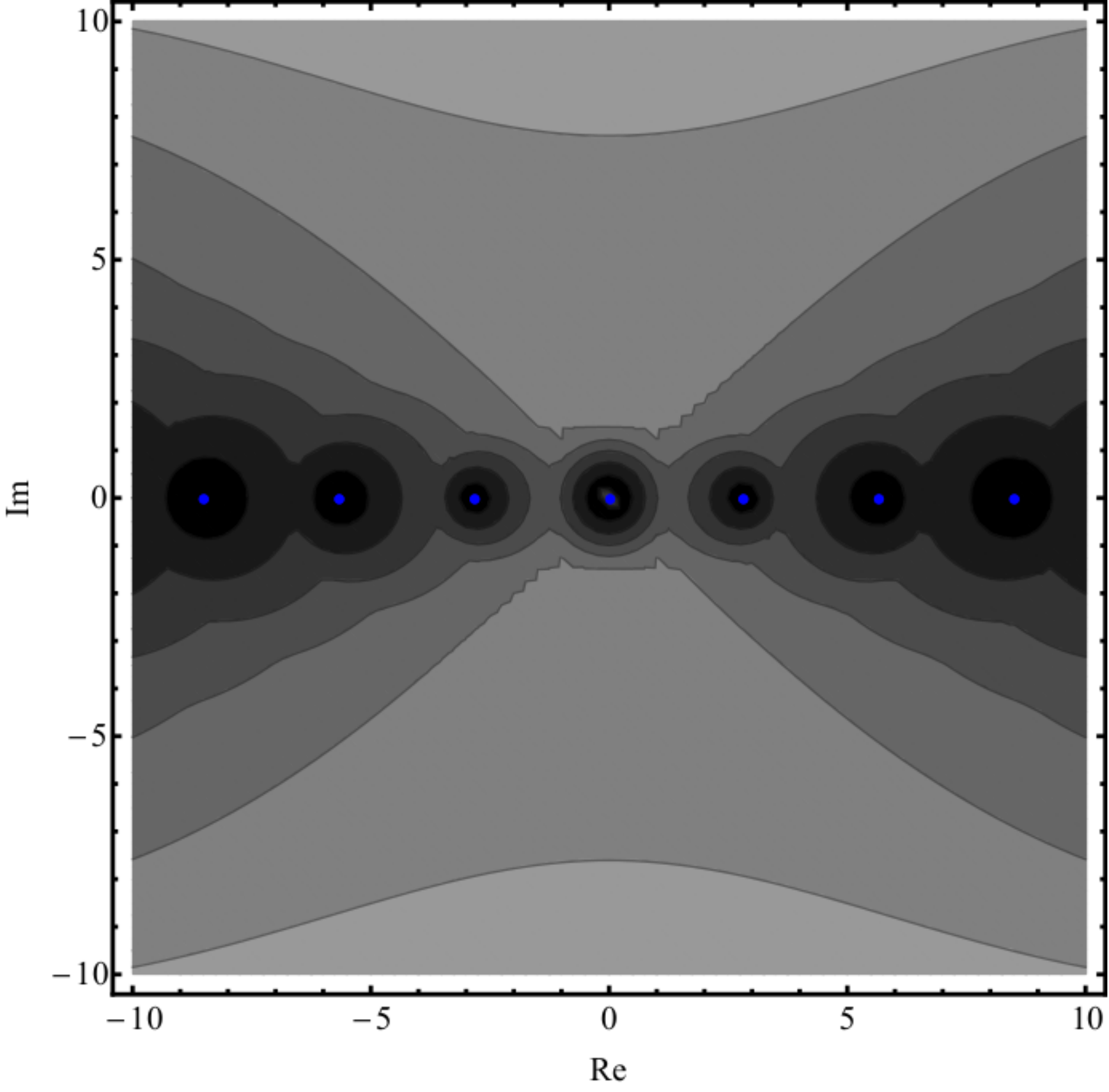}\vspace{-0.5mm}
\caption{\small Eigenvalues (blue dots) and $\eps$-pseudospectra of the truncated $4n\times 4n$ matrices $A_n$ for $n=2$ (top), $n=4$ (middle), $n=6$ (bottom) and $\eps=1.5,1.4,\dots,0.6,0.5$.
\label{figConstResNorm}}
\end{center}
\end{figure}
In Figure~\ref{figConstResNorm} the eigenvalues (blue dots) and nested $\eps$-pseudospectra (different shades of grey) of $A_n$ are shown for $n=2,4,6$ and $\eps=0.5,0.6,\dots,1.4,1.5$. 
As $n$ is increased, for $\eps> 1$ the $\eps$-pseudospectra grow and seem to fill the whole complex plane, whereas for $\eps\leq 1$ they converge to $\sigma_{\eps}(A)\neq\C$.
We prove these observations more rigorously. 
In fact, we show that there exists an open subset of the complex plane where the resolvent norms of $A$ and $T$ are constant ($1/\eps=1$). So we cannot conclude $\eps$-pseudospectral exactness using Theorem~\ref{thmblockdiag}~iii). However, $\eps$-pseudospectral inclusion follows from Theorem~\ref{thmexactness2}~i). In addition, the upper block-triangular form of $A$ implies that if $x\in H_n$, then
$P_n(A-\lm)^{-1}x=(A_n-\lm)^{-1}x_n$. This yields $\|(A_n-\lm)^{-1}\|\leq \|(A-\lm)^{-1}\|$ and so
$$\overline{\sigma_{\eps}(A_n)}\subset\overline{\sigma_{\eps}(A)}, \quad n\in\N.$$
Hence no $\eps$-pseudospectral pollution occurs.

We calculate, for $\lm=r\e^{\I\varphi}$ with $\re(\lm^2)=r^2\cos(2\varphi)<0$,
\beq\label{eq.resnorm.Tj}
\|(T_j-\lm)^{-1}\|^2=\frac{1}{|a_j-\lm^2|^2}\left\|\bmat \lm & 1 \\ a_j & \lm\emat\right\|^2
\leq \frac{(r+\max\{a_j,1\})^2}{r^4+2a_jr^2|\cos(2\varphi)|+a_j^2}.
\eeq
Hence, as in~\cite[Example~3.7]{Boegli-2014-80}, the resolvent norm of $T$ is constant on a non-empty open set,
$$\|(T-r\e^{\I\varphi})^{-1}\|=1 \quad \text{if}\quad \cos(2\varphi)<0,\quad r\geq\max\left\{\frac{1+\sqrt 5}{2},\frac{1}{|\cos(2\varphi)|}\right\}.$$
One may check that 
$$\sigma_{\rm ess}(T)=\sigma_{\rm ess}(T^*)^*=\emptyset, \quad \underset{K\text{ compact}}{\bigcap}\sigma(T+K)=\emptyset.$$
In addition, since $\|(T_j-\lm)^{-1}\|\to 1$, $j\to\infty$, for any $\lm\in\rho(T)$, we have $\Lambda_{{\rm ess},\eps}(T)=\Lambda_{{\rm ess},\eps}(T^*)^*=\emptyset$, $\eps\neq 1$.
Therefore, using Theorems~\ref{thmconstresnorm}~i) and~\ref{thmblockdiag}, 
$$\sigma_{\rm ess}(A)=\sigma_{\rm ess}(A^*)^*=\emptyset, \quad 
\Lambda_{{\rm ess},\eps}(A)=\Lambda_{{\rm ess},\eps}(A^*)^*=\begin{cases}\C, &\eps= 1, \\ \emptyset, &\eps\neq 1.\end{cases}$$
This implies, in particular, that $\|(A-\lm)^{-1}\|\geq 1$ for all $\lm\in\rho(A)$.
Now we prove that the resolvent norm is constant ($=1$) on an open set.
To this end, let $\varphi$ be so that $\cos(2\varphi)<-\frac{1}{4}$.
We show that there exists $r_{\varphi}>0$ such that
\beq\label{eq.const.A}
\|(A-r\e^{\I\varphi})^{-1}\|=1, \quad r\geq r_{\varphi}.
\eeq

Define
$$\delta_{-1}:=\frac{1}{3}, \quad \delta_j:=\frac{1}{a_{j+1}|\cos(2\varphi)|}, \quad j\in\Z\backslash\{-1\}.$$
Then $\delta_j\to 0$ as $|j|\to\infty$ and 
$$\delta_{j-1}a_j=\frac{1}{|\cos(2\varphi)|}, \quad j\in\Z\backslash\{0\}, \quad \delta:=\sup_{j\in\Z}\delta_j=\max\left\{\frac{1}{3},\frac{1}{a_1 |\cos(2\varphi)|}\right\}<\frac{1}{2}.$$
Define functions $f_j:[0,\infty)\to\R$, $j\in\Z$, by
$$f_0(r):= r^4(1-\delta)-\frac{r^2+1}{\delta_{-1}}-2r$$
and, for $j\neq 0$,
\begin{align*}
f_j(r):=& \,r^4(1-\delta)+1\\ &+a_j r^2\bigg(1-2\delta)|\cos(2\varphi)|-\frac{2}{r}
- \sup_{j\in\Z\backslash\{-1\}}\frac{a_j}{a_{j+1}}\frac{1}{|\cos(2\varphi)|}-|\cos(2\varphi)|\bigg).
\end{align*}
One may verify that there exists $r_{\varphi}>0$ such that $f_j(r)>0$ for all $j\in\Z$ and $r\geq r_{\varphi}$.
We calculate, for $\lm=r\e^{\I\varphi}$ with $r\geq r_{\varphi}$,
$$\|S_{j-1}(T_j-\lm)^{-1}\|^2=\frac{|\lm|^2+1}{|a_j-\lm^2|^2}= \frac{r^2+1}{r^4+2 a_jr^2 |\cos(2\varphi)|+a_j^2}, \quad j\in\Z.$$
We abbreviate
$$g_j(r):=r^4+2 a_jr^2 |\cos(2\varphi)|+a_j^2>0,  \quad j\in\Z.$$
Then, with~\eqref{eq.resnorm.Tj}, we estimate for $j\neq 0$,
\begin{align*}
&1-\delta_j-\left(\frac{1}{\delta_{j-1}}-1\right)\|S_{j-1}(T_{j}-\lm)^{-1}\|^2 -\|(T_j-\lm)^{-1}\|^2\\
&=\frac{r^4(1-\delta_j)+1+a_j\left(r^2\left(2(1-\delta_j)|\cos(2\varphi)|-\frac{1}{\delta_{j-1}a_j}-\frac{2}{r}\right)-\delta_ja_j-\frac{1}{\delta_{j-1}a_j}\right)}{g_j(r)}\\
&\geq \frac{f_j(r)}{g_j(r)}>0,
\end{align*}
and analogously for $j=0$.
So we arrive at
$$1-\delta_j-\left(\frac{1}{\delta_{j-1}}-1\right)\|S_{j-1}(T_{j}-\lm)^{-1}\|^2 > \|(T_j-\lm)^{-1}\|^2, \quad j\in\Z.$$
Let $x=(x_j)_{j\in\Z}\in\dom(A)=\dom(T)$ with $x_j=(u_j,v_j)^t\in\C^2$. Then
\begin{align*}
&\|(A-\lm)x\|^2\\
&=\|(T-\lm)x+Sx\|^2
=\sum_{j\in\Z}\|(T_j-\lm)x_j+S_{j}x_{j+1}\|^2\\
&\geq \sum_{j\in\Z}(1-\delta_j)\|(T_j-\lm)x_j\|^2-\left(\frac{1}{\delta_j}-1\right)\|S_jx_{j+1}\|^2\\
&\geq \sum_{j\in\Z}(1-\delta_j)\|(T_j-\lm)x_j\|^2-\left(\frac{1}{\delta_j}-1\right)\|S_j(T_{j+1}-\lm)^{-1}\|^2 \|(T_{j+1}-\lm)x_{j+1}\|^2\\
&= \sum_{j\in\Z}\left(1-\delta_j-\left(\frac{1}{\delta_{j-1}}-1\right)\|S_{j-1}(T_{j}-\lm)^{-1}\|^2 \right)\|(T_j-\lm)x_j\|^2\\
&\geq \sum_{j\in\Z}\|(T_j-\lm)^{-1}\|^2\|(T_j-\lm)x_j\|^2\geq \|x\|^2,
\end{align*}
which implies $\|(A-\lm)^{-1}\|\leq 1$ and hence~\eqref{eq.const.A}.
\end{example}

\subsection{Domain truncation of PDEs on $\R^d$}\label{subsectionPDE}
In this application we study the sum of two partial differential operators in $L^2(\R^d)$, the first one of order $k\in\N$ and the second one is of lower order and relatively compact. To this end, we use a multi-index $\alpha=(\alpha_1,\dots,\alpha_d)^t\in\R^d$ with $|\alpha|:=\alpha_1+\dots+\alpha_d$ and 
$$D^{\alpha}:=\frac{\rd^{|\alpha|}}{\rd x_1^{\alpha_1}\cdots\rd x_d^{\alpha_d}}, \quad
\zeta^{\alpha}:=\zeta_1^{\alpha_1}\cdots\zeta_d^{\alpha_d}, \quad \zeta=(\zeta_1,\dots,\zeta_d)^t\in\R^d.$$ 
The differential expressions are of the form
$$\tau:=\tau_1+\tau_2, \quad \tau_1:=\sum_{|\alpha|\leq k} \frac{1}{\I^{|\alpha|}} c_{\alpha}D^{\alpha},\quad \tau_2:=\sum_{|\alpha|\leq k-1}\frac{1}{\I^{|\alpha|}}  b_{\alpha} D^{\alpha},$$
where  $c_{\alpha}\in\C$. 
In order to reduce the technical difficulties, we assume that the functions $b_{\alpha}:\R\to \C$ are sufficiently smooth,
$$b_{\alpha}\in W^{|\alpha|,\infty}(\R^d), \quad |\alpha|\leq k-1.$$
In addition, suppose that  
\beq\label{ass.decay}
\lim_{|x|\to\infty}D^{\beta} b_{\alpha}(x)=0, \quad |\beta|\leq |\alpha|\leq k-1.
\eeq
Define the \emph{symbol} $p:\R^d\to\C$ and \emph{principal symbol} $p_k:\R^d\to\C$ by 
$$p(\zeta):=p_k(\zeta)+\sum_{|\alpha|\leq k-1}c_{\alpha}\zeta^\alpha, \quad p_k(\zeta):=\sum_{|\alpha|=k}c_{\alpha}\zeta^\alpha.$$
We assume that $p$ is elliptic, i.e.\ $$p_k(\zeta)\neq 0, \quad \zeta\in\R^d\backslash\{0\}.$$

For $n\in\N$ let $P_n$ be the orthogonal projection of $L^2(\R^d)$ onto $L^2\big((-n,n)^d\big)$, given by multiplication with the characteristic function $\chi_{(-n,n)^d}$. It is easy to see that $P_n\s I$.

\begin{theorem}\label{thmdiffop}
Let $A$ and $A_n$, $n\in\N$, be realisations of $\tau$ in $L^2(\R)$ and $L^2\big((-n,n)^d\big)$, $n\in\N$, respectively, with domains
\begin{align*}
\dom(A)&:=W^{k,2}(\R^d), \\ 
\dom(A_n)&:=\big\{f\in W^{k,2}\big((-n,n)^d\big):\,D^{\alpha}f|_{\{x_j=-n\}}=D^{\alpha}f|_{\{x_j=n\}}, \,j\leq d,\,|\alpha|\leq k-1\big\}.
\end{align*}
\begin{enumerate}
\item[\rm i)] We have  $A_n\gsr A$ and $A_n^*\gsr A^*$.
\item[\rm ii)]  The limiting essential spectra satisfy 
\begin{align}\label{eq.sigmae.diffop}
\sigma_{\rm ess}\big((A_n)_{n\in\N}\big)&=\sigma_{\rm ess}\big((A_n^*)_{n\in\N}\big)^*=\sigma_{\rm ess}(A)=\{p(\zeta):\,\zeta\in\R^d\};
\end{align}
hence no spectral pollution occurs for the approximation $(A_n)_{n\in\N}$ of~$A$, and every isolated $\lm\in\sigma_{\rm dis}(A)$ is the limit of a sequence $(\lm_n)_{n\in\N}$ with $\lm_n\in\sigma(A_n)$, $n\in\N$.
\item[\rm iii)] The limiting essential $\eps$-near spectra satisfy 
\beq \label{eq.Lambdae.diffop}
\begin{aligned}
\Lambda_{{\rm ess},\eps}\big((A_n)_{n\in\N}\big)&=\Lambda_{{\rm ess},\eps}\big((A_n^*)_{n\in\N}\big)^*=\Lambda_{{\rm ess},\eps}(A)\\
&=\{p(\zeta)+z:\,\zeta\in\R^d,\,|z|= \eps\}\subset\overline{\sigma_{\eps}(A)},
\end{aligned}
\eeq
and so $(A_n)_{n\in\N}$ is an $\eps$-pseudospectrally exact approximation of $A$.
\end{enumerate}
\end{theorem}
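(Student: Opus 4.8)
The plan is to imitate the proof of Theorem~\ref{thmblockdiag}: split $\tau=\tau_1+\tau_2$, let $T$ and $T_n$ be the realisations of the leading constant-coefficient part $\tau_1$ in $L^2(\R^d)$ and $L^2\big((-n,n)^d\big)$ with domains $\dom(T):=W^{k,2}(\R^d)$ and $\dom(T_n):=\dom(A_n)$, and let $S$, $S_n$ be the realisations of the lower-order part $\tau_2$ on $W^{k-1,2}$ of the respective domains, so that $A=T+S$, $A_n=T_n+S_n$ and $A^*=T^*+S^*$. One then verifies the hypotheses of Theorem~\ref{prop.pert.pseudo}~ii),~iii) and of the perturbation result \cite[Theorem~3.3]{Boegli-chap1} for a suitable reference point $\lm_0\in\rho(T)$, and finally reads off the conclusion from Theorems~\ref{mainthmspectralexactness} and~\ref{thmexactness2} exactly as in the matrix case.

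\emph{Step 1 (the constant-coefficient model).} Via the Fourier transform on $\R^d$, $T$ is unitarily equivalent to multiplication by $p$; via Fourier series on the torus $(-n,n)^d$, $T_n$ has the orthonormal eigenbasis $e_m^{(n)}(x):=(2n)^{-d/2}\e^{\I\frac{\pi}{n}m\cdot x}$, $m\in\Z^d$, with eigenvalues $p(\frac{\pi}{n}m)$; the operators $T^*$, $T_n^*$ are treated the same way with $p$ replaced by $\overline{p}$. Ellipticity gives $|p(\zeta)|\ge c|\zeta|^k-C|\zeta|^{k-1}\to\infty$, so $\zeta\mapsto p(\zeta)$ is proper, $\{p(\zeta):\zeta\in\R^d\}$ is closed, and
\[
\sigma(T)=\sigma_{\rm ess}(T)=\{p(\zeta):\zeta\in\R^d\}=\sigma(T^*)^*,\qquad \|(T_n-\lm)^{-1}\|\le\frac{1}{\dist(\lm,\sigma(T))},
\]
since $\sigma(T_n)\subset\sigma(T)$; in particular $\rho(T)\subset\Delta_b\big((T_n)_{n\in\N}\big)$ and $\rho(T^*)\subset\Delta_b\big((T_n^*)_{n\in\N}\big)$. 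Because the lattices $\frac{\pi}{n}\Z^d$ become dense in $\R^d$ and the exact eigenfunctions $e_{m(n)}^{(n)}$ tend weakly to $0$ in $L^2(\R^d)$ (e.g.\ $\langle e_m^{(n)},g\rangle\to0$ for $g\in L^1\cap L^2$), choosing $\frac{\pi}{n}m(n)\to\zeta_0$ with $p(\zeta_0)=\lm$, resp.\ $|p(\zeta_0)-\lm|=\eps$, yields the inclusions ``$\supset$'' in the identities $\sigma_{\rm ess}\big((T_n)_{n\in\N}\big)=\sigma_{\rm ess}\big((T_n^*)_{n\in\N}\big)^*=\{p(\zeta):\zeta\in\R^d\}$ and $\Lambda_{{\rm ess},\eps}\big((T_n)_{n\in\N}\big)=\Lambda_{{\rm ess},\eps}\big((T_n^*)_{n\in\N}\big)^*=\{p(\zeta)+z:\zeta\in\R^d,\,|z|=\eps\}$; the reverse inclusions follow from $\|(T_n-\lm)x_n\|^2=\sum_m|p(\tfrac{\pi}{n}m)-\lm|^2|\widehat{x_n}(m)|^2$ (if $\dist(\lm,\{p(\zeta)\})>\eps$ this is bounded below by a constant $>\eps^2$; if $\dist(\lm,\{p(\zeta)\})<\eps$ the intermediate value theorem produces $\zeta$ with $|p(\zeta)-\lm|=\eps$). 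The same reasoning gives these identities with $T$ in place of $(T_n)_{n\in\N}$; and since $\{p(\zeta)+z:|z|=\eps\}$ is the closed $\eps$-neighbourhood of the unbounded set $\sigma(T)$, Lemma~\ref{lemma.eps.T}~iv) yields $\Lambda_{{\rm ess},\eps}(T)\subset\overline{\sigma_{\eps}(T)}$.

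\emph{Step 2 (the perturbation).} The operator $S$ is $T$-compact: each summand $\I^{-|\alpha|}b_\alpha D^\alpha(T-\lm_0)^{-1}$ of $S(T-\lm_0)^{-1}$ is a product of multiplication by $b_\alpha(x)$, vanishing at infinity by~\eqref{ass.decay}, and the Fourier multiplier with symbol $\zeta^\alpha/(p(\zeta)-\lm_0)$, vanishing at infinity since $|\alpha|<k$; such operators are compact by the classical criterion, and the decay of the $D^\beta b_\alpha$ in~\eqref{ass.decay} handles $S^*$ analogously. Hence $S$ has $T$-bound $0$, and since $|\zeta|^{k-1}\le\delta|p(\zeta)|+C_\delta$ uniformly in $\zeta$, also $S_n$ has $T_n$-bound $0$ uniformly in $n$; together with $\|(T_n-\lm_0)^{-1}\|\le1/\dist(\lm_0,\sigma(T))$ this gives $\|S(T-\lm_0)^{-1}\|<1$, $\|S^*(T^*-\overline{\lm_0})^{-1}\|<1$ and $\|S_n(T_n-\lm_0)^{-1}\|\le\gamma_{\lm_0}<1$ once $\lm_0\in\rho(T)$ is chosen far enough from $\sigma(T)$, which is hypothesis~(a). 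Hypothesis~(b), discrete compactness of $\big(S_n(T_n-\lm_0)^{-1}\big)_{n\in\N}$, follows from the uniform $W^{k-1,2}$-regularisation by $(T_n-\lm_0)^{-1}$, a Rellich argument on balls $\{|x|\le R\}$ (valid uniformly for $n\ge R$), and the smallness of the $b_\alpha$ on $\{|x|>R\}$. Hypothesis~(c), together with $(T_n^*-\overline{\lm_0})^{-1}P_n\s(T^*-\overline{\lm_0})^{-1}P$ and the further strong convergences needed for \cite[Theorem~3.3]{Boegli-chap1} and for the ``in addition'' parts of Theorem~\ref{prop.pert.pseudo}, follows from the domain-truncation strong resolvent convergence of $T$ and $T^*$, obtained from \cite[Theorem~3.1]{Boegli-chap1} as in Example~\ref{ex.firstderivative}: $C_0^\infty$ is a core on which $T_n$, $T$ (resp.\ their adjoints) agree, and $\rho(T),\rho(T^*)$ lie in the corresponding regions of boundedness by Step~1.

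\emph{Assembly.} Theorem~\ref{prop.pert.pseudo}~ii),~iii) and \cite[Theorem~3.3]{Boegli-chap1} then yield $A_n\gsr A$, $A_n^*\gsr A^*$, $\sigma_{\rm ess}(A)=\sigma_{\rm ess}(T)$ (also directly from \cite[Theorem~IX.2.1]{edmundsevans}), $\Lambda_{{\rm ess},\eps}(A)=\Lambda_{{\rm ess},\eps}(T)$, and the same identities for the limiting sets of $(A_n)_{n\in\N}$ and $(A_n^*)_{n\in\N}$; combined with Step~1 this gives~\eqref{eq.sigmae.diffop} and~\eqref{eq.Lambdae.diffop}. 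Since the common value in~\eqref{eq.sigmae.diffop} equals $\sigma_{\rm ess}(A)\subset\sigma(A)$ and $\Lambda_{{\rm ess},(0,\eps]}=\{w:\dist(w,\sigma_{\rm ess}(A))\le\eps\}\subset\overline{\sigma_{\eps}(A)}$, the exceptional sets in Theorems~\ref{mainthmspectralexactness} and~\ref{thmexactness2} are contained in $\sigma(A)$, resp.\ $\overline{\sigma_{\eps}(A)}$, so no spectral and no $\eps$-pseudospectral pollution occurs; spectral inclusion for isolated points of $\sigma_{\rm dis}(A)$ comes from Theorem~\ref{mainthmspectralexactness}~i) and $\eps$-pseudospectral inclusion from Theorem~\ref{thmexactness2}~i). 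The main obstacle is Step~2 — establishing the domain-truncation statements, in particular the discrete compactness of $\big(S_n(T_n-\lm_0)^{-1}\big)_{n\in\N}$ and the strong convergence $\big(S_n(T_n-\lm_0)^{-1}\big)^*P_n\s\big(S(T-\lm_0)^{-1}\big)^*P$, which require controlling the behaviour near the truncation boundary $\partial(-n,n)^d$ and the unboundedness of $S^*$. (The argument presupposes $\rho(T)\neq\emptyset$, i.e.\ $\{p(\zeta):\zeta\in\R^d\}\neq\C$; if equality holds one has $\sigma_{\rm ess}(A)=\C$ by invariance of $\sigma_{e,2}$ under $T$-compact perturbations and $\Lambda_{{\rm ess},\eps}(A)=\C$ by Proposition~\ref{lemma.limiting.eps}~ii), and the assertions degenerate accordingly.)
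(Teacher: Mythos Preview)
Your proposal is correct and follows essentially the same route as the paper: split $A=T+S$, diagonalise the constant-coefficient part $T$ and $T_n$ via Fourier transform/series to compute the (limiting) essential and $\eps$-near spectra, verify hypotheses (a)--(c) of Theorem~\ref{prop.pert.pseudo}~ii),~iii) for a suitable $\lm_0$, and read off the conclusion from Theorems~\ref{mainthmspectralexactness} and~\ref{thmexactness2}. The only substantive differences are cosmetic: for the inclusions ``$\supset$'' in Step~1 the paper invokes Propositions~\ref{propsigmaess} and~\ref{proplimsigmaess} rather than your explicit eigenfunction argument, and for hypothesis~(c) the paper computes $(T_n-\lm_0)^{-1}P_nf$, $(S_n(T_n-\lm_0)^{-1})^*P_nf$ etc.\ directly on $f\in C_0^\infty(\R^d)$ (using that such $f$ lie in $\dom(S^*)$ and that the periodic and whole-space problems agree on compactly supported data) and then passes to all of $L^2$ by density and the uniform bounds~\eqref{eq.prodestimate1}, whereas you gesture towards \cite[Theorem~3.1]{Boegli-chap1}; the paper's explicit computation is exactly what resolves the ``obstacle'' you flag concerning $(S_n(T_n-\lm_0)^{-1})^*P_n\s(S(T-\lm_0)^{-1})^*P$.
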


\begin{proof}
Let $T,S$ and $T_n,S_n$, $n\in\N$, be the realisations of $\tau_1, \tau_2$ in $L^2(\R)$ and $L^2\big((-n,n)^d\big)$, $n\in\N$, respectively, with domains
\begin{align*}
\dom(T)&=\dom(S):=\dom(A), \quad
\dom(T_n)=\dom(S_n):=\dom(A_n), \quad n\in\N.
\end{align*}
The operators $T$ and $T_n$, $n\in\N$, are normal; the symbol of the adjoint operators $T^*$, $T_n^*$, $n\in\N$, is simply the complex conjugate symbol $\overline{p}$.
For $f\in L^2(\R^d)$ denote its Fourier transform by $\widehat f$ and
for $n\in\N$ and $f_n\in L^2((-n,n)^d)$ denote by $\widehat f_n=(\widehat f_n(\zeta))_{\zeta\in\Z^d}\in l^2(\Z^d)$ the complex Fourier coefficients, i.e.\
\begin{align*}
\widehat f(\zeta)&:=\frac{1}{(2\pi)^{\frac{d}{2}}}\int_{\R^d}f(x)\e^{-\I\zeta \cdot x}\rd x, \quad \zeta\in\R^d,\\
\widehat f_n(\zeta)&:=\frac{1}{(2n)^{\frac{d}{2}}}\int_{(-n,n)^d}f(x)\e^{-\I\frac{\pi}{n}\zeta \cdot x}\rd x, \quad \zeta\in\Z^d.\end{align*}
Parseval's identity yields that, if $f\in\dom(T)$, $f_n\in\dom(T_n)$,
\begin{align*}
\|(T-\lm)f\|&=\|(p-\lm)\widehat f\|, \quad \|(T_n-\lm)f_n\|=\Big\|\left(p\left(\cdot\, \frac{\pi}{n}\right)-\lm\right)\widehat f_n\Big\|_{l^2(\Z^d)};
\end{align*}
moreover, if $\lm\notin\{p(\zeta):\,\zeta\in\R^d\}$, then
$$\|(T-\lm)^{-1}f\|=\|(p-\lm)^{-1}\widehat f\|, \quad \|(T_n-\lm)^{-1}f_n\|=\Big\|\left(p\left(\cdot\, \frac{\pi}{n}\right)-\lm\right)^{-1}\widehat f_n\Big\|_{l^2(\Z^d)}.$$
We readily conclude
\begin{align*}
\sigma_{\rm ess}(T)&=\sigma(T)=\{p(\zeta):\,\zeta\in\R^d\}, \\
\sigma_{\eps}(T)&=\{\lm\in\C:\,{\rm dist}(\lm,\sigma(T))<\eps\}, 
\quad \Lambda_{{\rm ess},\eps}(T)=\{p(\zeta)+z:\,\zeta\in\R^d,\,|z|=\eps\},\\
\sigma(T_n)&=\Big\{p\Big(\zeta\frac{\pi}{n}\Big):\,\zeta\in\Z^d\Big\},\quad \sigma_{\eps}(T_n)=\{\lm\in\C:\,{\rm dist}(\lm,\sigma(T_n))<\eps\}, \quad n\in\N,
\end{align*}
and 
\begin{align*}
\sigma_{\rm ess}\big((T_n)_{n\in\N}\big)&\subset \{p(\zeta):\,\zeta\in\R^d\}=\sigma_{\rm ess}(T), \\
\Lambda_{{\rm ess},\eps}\big((T_n)_{n\in\N}\big)&\subset \{p(\zeta)+z:\,\zeta\in\R^d,\,|z|=\eps\}=\Lambda_{{\rm ess},\eps}(T),
\end{align*}
and the latter are equalities by Propositions~\ref{propsigmaess} and~\ref{proplimsigmaess}.
The same identities hold for the adjoint operators. This proves~\eqref{eq.sigmae.diffop} and~\eqref{eq.Lambdae.diffop}.

For any $\Omega\subset\R^d$ we have
\beq\label{eq.estimateProd.Omega}
\begin{aligned}
\|\chi_{\Omega}S(T-\lm)^{-1}f\|&\leq \sum_{|\alpha|\leq k-1}\|b_{\alpha}\|_{L^{\infty}(\Omega)}\sup_{\zeta\in\R^d}\Big|\zeta^{\alpha}\Big(p\Big(\zeta\frac{\pi}{n}\Big)-\lm\Big)^{-1}\Big|\| f\|,\\
\|\chi_{\Omega}S_n(T_n-\lm)^{-1}f_n\|&\leq \sum_{|\alpha|\leq k-1}\|b_{\alpha}\|_{L^{\infty}(\Omega)}\sup_{\zeta\in\R^d}\Big|\zeta^{\alpha}\Big(p\Big(\zeta\frac{\pi}{n}\Big)-\lm\Big)^{-1}\Big| \| f_n\|.
\end{aligned}
\eeq
By setting $\Omega=\R^d$, we see that $\|S(T-\lm)^{-1}\|$, $\|S_n(T_n-\lm)^{-1}\|$, $n\in\N$, are uniformly bounded, and the uniform bound can be arbitrarily small by choosing $\lm$ far away from $\{p(\zeta):\,\zeta\in\R^d\}$.
The same argument also holds for the adjoint operators.
Let $\lm_0$ be so that
\beq\label{eq.prodestimate1}
\begin{aligned}
\|S(T-\lm_0)^{-1}\|<1, \quad \sup_{n\in\N}\|S_n(T_n-\lm_0)^{-1}\|<1, \\
\|S^*(T^*-\overline{\lm_0})^{-1}\|<1, \quad \sup_{n\in\N}\|S_n^*(T_n^*-\overline{\lm_0})^{-1}\|<1.
\end{aligned}
\eeq
Hence, in particular, $S$, $S_n$, $S^*$, $S_n^*$ are respectively $T$-, $T_n$-, $T^*$-, $T_n^*$-bounded with relative bounds $<1$ and so, by~\cite[Corollary~1]{Hess-Kato}, $A^*=T^*+S^*$ and $A_n^*=T_n^*+S_n^*$. 

By the assumptions~\eqref{ass.decay} and~\cite[Theorem~IX.8.2]{edmundsevans}, the operator $S$ is $T$-compact and $S^*$ is $T^*$-compact. Hence~\cite[Theorem~IX.2.1]{edmundsevans} implies
$$\sigma_{{\rm ess}}(A)=\sigma_{\rm ess}(T), \quad \sigma_{\rm ess}(A^*)^*=\sigma_{\rm ess}(T^*)^*.$$

Now we show that the assumptions (a)--(c) of Theorem~\ref{prop.pert.pseudo}~ii),~iii) are satisfied for both $A=T+S$, $A_n=T_n+S_n$ and $A^*=T^*+S^*$, $A_n^*=T_n^*+S_n^*$; then the claims i)--iii) follow from the above arguments and together with Theorems~\ref{mainthmspectralexactness},~\ref{thmexactness2}.
We prove (c) before (b) as the proof of the latter relies on the former.

(a)
The estimates are satisfied by the choice of $\lm_0$, see~\eqref{eq.prodestimate1}.

(c)
Let $f\in C_0^{\infty}(\R^d)$, and let $n_f\in\N$ be so large that ${\rm supp}f\subset (-n_f,n_f)^d$. Then, for $n\geq n_f$,
\begin{align*}
(T_n-\lm_0)^{-1}P_n f&=P_n(T-\lm_0)^{-1}f, \\
 (T_n^*-\overline{\lm_0})^{-1}P_nf&=P_n(T^*-\overline{\lm_0})^{-1}f,\\
(S_n(T_n-\lm_0)^{-1})^*P_n f&=(T_n^*-\overline{\lm_0})^{-1}S_n^*P_nf=P_n(T^*-\overline{\lm_0})^{-1}S^*f\\
&=P_n(S(T-\lm_0)^{-1})^*f,\\
(S_n^*(T_n^*-\overline{\lm_0})^{-1})^*P_n f&=(T_n-\lm_0)^{-1}S_nP_nf=P_n(T-\lm_0)^{-1}Sf\\
&=P_n(S^*(T^*-\overline{\lm_0})^{-1})^*f.
\end{align*}
Now the claimed strong convergences follow using~\eqref{eq.prodestimate1} and the density of $C_0^{\infty}(\R^d)$ in $L^2(\R^d)$.

(b)
We prove that 
$\big(S_n(T_n-\lm_0)^{-1}\big)_{n\in\N}$ is a discretely compact sequence; for $\big(S_n^*(T_n^*-\overline{\lm_0}\big)^{-1})_{n\in\N}$ the argument is analogous.
To this end, let $I\subset\N$ be an infinite subset and let $f_n\in L^2((-n,n)^d)$, $n\in I$, be a bounded sequence. Then there exists an infinite subset $I_1\subset I$ such that $(f_n)_{n\in I_1}$ is  weakly convergent in $L^2(\R^d)$; denote the weak limit by $f$. We show that $\|S_n(T_n-\lm_0)^{-1}f_n-S(T-\lm_0)^{-1}f\|\to 0$ as $n\in I_1$, $n\to\infty$.
Assume that the claim is false, i.e.\ there exist an infinite subset $I_2\subset I_1$  and $\delta>0$ so that
\beq\label{eq.disccomp.contra}
\|S_n(T_n-\lm_0)^{-1}f_n-S(T-\lm_0)^{-1}f\|^2\geq\delta, \quad n\in I_2.
\eeq
Note that (c) and Lemma~\ref{lemmadisccompletelycont}~i) imply that $(S_n(T_n-\lm)^{-1}f_n)_{n\in I_2}$ converges weakly to $S(T-\lm_0)^{-1}f$.
The assumption~\eqref{ass.decay} yields 
$$\lim_{n\to\infty}\|b_{\alpha}\|_{L^{\infty}(\R^d\backslash (-n,n)^d)}= 0, \quad |\alpha|\leq k-1.$$ 
Hence, by~\eqref{eq.estimateProd.Omega},
there exists $n_0\in\N$ so large that 
$$\|\chi_{\R^d\backslash (-n_0,n_0)^d}S_n(T_n-\lm)^{-1}f_n-\chi_{\R^d\backslash (-n_0,n_0)^d}S(T-\lm)^{-1}f\|^2< \frac{\delta}{2}$$
for all $n\in I_2$ with $n\geq n_0$; denote by $I_3$ the set of all such $n$.
An estimate similar to~\eqref{eq.estimateProd.Omega} reveals that the $W^{k,2}((-n_0,n_0)^d)$ norms $$\|\chi_{(-n_0,n_0)^d}(T_n-\lm_0)^{-1}f_n-\chi_{(-n_0,n_0)^d}(T-\lm_0)^{-1}f\|_{W^{k,2}((-n_0,n_0)^d)}, \quad n\in I_3,$$ 
are uniformly bounded, and so the Sobolev embedding theorem yields  
$$\|\chi_{(-n_0,n_0)^d}S_n(T_n-\lm_0)^{-1}f_n-\chi_{(-n_0,n_0)^d}S(T-\lm_0)^{-1}f\|_{L^{2}((-n_0,n_0)^d)}\tolong 0.$$ 
Altogether we arrive at a contradiction to~\eqref{eq.disccomp.contra}, which proves the claim.
\end{proof}

It is convenient to represent $A_n$ with respect to the Fourier basis and, in a further approximation step, to truncate the infinite matrix to finite sections. We prove that these two approximation processes can be performed in one.
For $n\in\N$ let $$e_{\zeta}^{(n)}(x):=\frac{1}{(2n)^{\frac{d}{2}}}\,\e^{\I \frac{\pi}{n} \zeta\cdot x}, \quad x\in (-n,n)^d,\quad \zeta \in\Z^d,$$ 
denote the Fourier basis of $L^2((-n,n)^d)$. Note that the basis functions belong to $\dom(A_n)$. 
Let $Q_n$ denote the orthogonal projection of $L^2((-n,n)^d)$ onto 
\beq\label{eq.nfourier}
{\rm span}\big\{e_{\zeta}^{(n)}:\,\zeta\in\Z^d,\,\|\zeta\|_{\infty}\leq n\big\}.
\eeq 
One may check that $Q_n\s I$ and hence $Q_nP_n\s I$.

\begin{theorem}
The claims {\rm i)--iii)} of Theorem~{\rm\ref{thmdiffop}} continue to hold if $A_n$ is replaced by $A_{n;n}:=Q_nA_n|_{\ran(Q_n)}$.
\end{theorem}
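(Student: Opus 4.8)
The plan is to transcribe the proof of Theorem~\ref{thmdiffop}, replacing the single compression $A_n$ by the double compression $A_{n;n}=Q_nA_n|_{\ran(Q_n)}$. Decompose $A_{n;n}=T_{n;n}+S_{n;n}$ with $T_{n;n}:=Q_nT_n|_{\ran(Q_n)}$ and $S_{n;n}:=Q_nS_n|_{\ran(Q_n)}$, where $T_n,S_n$ are the realisations of $\tau_1,\tau_2$ used in the proof of Theorem~\ref{thmdiffop}. Since $T_n$ is normal and diagonalised by the Fourier basis $\{e_\zeta^{(n)}\}$, every retained mode $e_\zeta^{(n)}$ (with $\|\zeta\|_\infty\leq n$) remains an eigenvector of $T_{n;n}$ with the same eigenvalue; hence $T_{n;n}$ is normal, $\lm_0\in\bigcap_n\rho(T_{n;n})\cap\rho(T)$ with $(T_{n;n}-\lm_0)^{-1}=(T_n-\lm_0)^{-1}|_{\ran(Q_n)}$, and all the Parseval-type identities from the proof of Theorem~\ref{thmdiffop} hold verbatim with the $l^2(\Z^d)$-sums restricted to the retained modes.

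First I would prove $T_{n;n}\gsr T$ and $T_{n;n}^*\gsr T^*$. Writing $(T_{n;n}-\lm_0)^{-1}Q_nP_n=Q_n(T_n-\lm_0)^{-1}Q_nP_n$ and comparing with $(T_n-\lm_0)^{-1}P_n$, one bounds its distance from $(T-\lm_0)^{-1}P$ by a sum of three terms, each tending to $0$: one by $(T_n-\lm_0)^{-1}P_n\s(T-\lm_0)^{-1}P$ from Theorem~\ref{thmdiffop}, and the other two by $Q_nP_n\s I$ together with $\sup_n\|(T_n-\lm_0)^{-1}\|<\infty$ and the contractivity of $Q_nP_n$; the argument for the adjoints (symbol $\overline p$) is identical. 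Given $T_{n;n}\gsr T$ and $T_{n;n}^*\gsr T^*$, Propositions~\ref{propsigmaess}~i) and~\ref{proplimsigmaess}~i) yield $\sigma_{\rm ess}(T)\subset\sigma_{\rm ess}((T_{n;n})_{n\in\N})$ and $\Lambda_{{\rm ess},\eps}(T)\subset\Lambda_{{\rm ess},\eps}((T_{n;n})_{n\in\N})$ (and the adjoint analogues); conversely, the explicit diagonal form of $T_{n;n}$ and ellipticity of $p$ give the reverse inclusions, since a normalised weakly null $x_n\in\ran(Q_n)$ with $\|(T_{n;n}-\lm)x_n\|\to 0$ (resp.\ $\to\eps$) forces $\dist(\lm,\{p(\zeta):\zeta\in\R^d\})$ to vanish (resp.\ to be $\leq\eps$, which by the intermediate value theorem and $|p(\zeta)|\to\infty$ places $\lm$ in $\{p(\zeta)+z:\zeta\in\R^d,\,|z|=\eps\}=\Lambda_{{\rm ess},\eps}(T)$). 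Thus $(T_{n;n})_{n\in\N}$, $(T_{n;n}^*)_{n\in\N}$ have the same limiting essential and essential $\eps$-near spectra as $(T_n)_{n\in\N}$, $(T_n^*)_{n\in\N}$, namely $\sigma_{\rm ess}(T)=\{p(\zeta):\zeta\in\R^d\}$ and $\Lambda_{{\rm ess},\eps}(T)$.

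Next I would verify hypotheses (a)--(c) of Theorem~\ref{prop.pert.pseudo}~ii),~iii) for both $A_{n;n}=T_{n;n}+S_{n;n}$ and $A_{n;n}^*=T_{n;n}^*+S_{n;n}^*$, following the proof of Theorem~\ref{thmdiffop}. For (a), $S_{n;n}(T_{n;n}-\lm_0)^{-1}=Q_nS_n(T_n-\lm_0)^{-1}|_{\ran(Q_n)}$, so $\|S_{n;n}(T_{n;n}-\lm_0)^{-1}\|\leq\|S_n(T_n-\lm_0)^{-1}\|<1$ by the choice of $\lm_0$. For (c), the computations of the proof of Theorem~\ref{thmdiffop} combined with $Q_nP_n\s I$ give the required strong convergences on the dense set $C_0^\infty(\R^d)$. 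For (b), discrete compactness of $(S_{n;n}(T_{n;n}-\lm_0)^{-1})_{n\in\N}=(Q_nS_n(T_n-\lm_0)^{-1}|_{\ran(Q_n)})_{n\in\N}$ follows from the discrete compactness of $(S_n(T_n-\lm_0)^{-1})_{n\in\N}$ proved in Theorem~\ref{thmdiffop} together with $Q_nP_n\s I$, since $Q_n$ is a contraction. Theorem~\ref{prop.pert.pseudo} then gives $A_{n;n}^*\gsr A^*$, $\sigma_{\rm ess}((A_{n;n})_{n\in\N})=\sigma_{\rm ess}((T_{n;n})_{n\in\N})$, $\Lambda_{{\rm ess},\eps}((A_{n;n})_{n\in\N})=\Lambda_{{\rm ess},\eps}((T_{n;n})_{n\in\N})$ and the adjoint analogues; with $\sigma_{\rm ess}(A)=\sigma_{\rm ess}(T)$, $\Lambda_{{\rm ess},\eps}(A)=\Lambda_{{\rm ess},\eps}(T)$ (as in Theorem~\ref{thmdiffop}, by $T$-compactness of $S$) this establishes the displayed equalities in ii) and iii). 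For $A_{n;n}\gsr A$ I would use $(T_{n;n}-\lm_0)^{-1}Q_nP_n\s(T-\lm_0)^{-1}P$, $S_{n;n}(T_{n;n}-\lm_0)^{-1}Q_nP_n\s S(T-\lm_0)^{-1}P$, (a), and the perturbation result \cite[Theorem~3.3]{Boegli-chap1}, exactly as in Theorem~\ref{thmdiffop}; the spectral-exactness consequences in ii) and the $\eps$-pseudospectral exactness in iii) then follow from Theorems~\ref{mainthmspectralexactness} and~\ref{thmexactness2}.

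The main obstacle is the single step $T_{n;n}\gsr T$: one must check that the extra finite-rank Fourier truncation $Q_n$ does not spoil strong resolvent convergence, and this is precisely where the property $Q_nP_n\s I$ (equivalently, that the retained frequency grids exhaust $\R^d$ in the limit) is essential---it also underlies the lower bounds $\sigma_{\rm ess}(T)\subset\sigma_{\rm ess}((T_{n;n})_{n\in\N})$ and $\Lambda_{{\rm ess},\eps}(T)\subset\Lambda_{{\rm ess},\eps}((T_{n;n})_{n\in\N})$. Once this is in place, the two-step compression inherits the relative-bound estimates and the discrete compactness from the one-step compression with no extra work, so the remainder is a mechanical transcription of the proof of Theorem~\ref{thmdiffop}.
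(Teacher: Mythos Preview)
Your proposal is correct and follows essentially the same approach as the paper: both decompose $A_{n;n}=T_{n;n}+S_{n;n}$, exploit the commutation $Q_nT_n=T_{n;n}Q_n$ (equivalently $(T_{n;n}-\lm_0)^{-1}=(T_n-\lm_0)^{-1}|_{\ran(Q_n)}$) to identify $S_{n;n}(T_{n;n}-\lm_0)^{-1}=Q_nS_n(T_n-\lm_0)^{-1}|_{\ran(Q_n)}$, inherit discrete compactness from $(S_n(T_n-\lm_0)^{-1})_{n\in\N}$ via $Q_n\s I$, and then rerun the proof of Theorem~\ref{thmdiffop}. The paper's proof is simply terser, wrapping your explicit verification of $T_{n;n}\gsr T$ and the limiting-essential-spectrum equalities into the phrase ``analogously as in the proof of Theorem~\ref{thmdiffop}''.
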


\begin{proof}
Define $T_{n;n}:=Q_nT_n|_{\ran(Q_n)}$, $n\in\N$.
 Note that $Q_nT_n=T_{n;n}Q_n$, $n\in\N$.
Analogously as in the proof of Theorem~{\rm\ref{thmdiffop}}, we obtain
$$\sigma_{\rm ess}\big((T_{n;n})_{n\in\N}\big)= \sigma_{\rm ess}(T), \quad \Lambda_{{\rm ess},\eps}\big((T_{n;n})_{n\in\N}\big)= \Lambda_{{\rm ess},\eps}(T),$$
and the respective equalities for the adjoint operators.
It is easy to see that $S_{n;n}:=Q_nS_n|_{\ran(Q_n)}$, $n\in\N$, satisfy
$$S_{n;n}(T_{n;n}-\lm_0)^{-1}=Q_nS_n(T_n-\lm)^{-1}|_{\ran(Q_n)}, \quad n\in\N,$$
and hence the discrete compactness of $\big(S_{n;n}(T_{n;n}-\lm_0)^{-1}\big)_{n\in\N}$ follows from the one of $\big(S_n(T_n-\lm_0)^{-1}\big)_{n\in\N}$ and from $Q_n\s I$. By an analogous reasoning, the sequence $\big(S_{n;n}^*(T_{n;n}^*-\overline{\lm_0}\big)^{-1})_{n\in\N}$ is discretely compact.
The rest of the proof follows the one of  Theorem~{\rm\ref{thmdiffop}}.
\end{proof}

\begin{example}
Let $d=1$ and consider the constant-coefficient differential operator 
$$T:=-\frac{\rd^2}{\rd x^2}-2\,\frac{\rd}{\rd x}, \quad \dom(T):=W^{2,2}(\R).$$
The above assumptions are satisfied if we perturb $T$ by $S=b$ with a potential $b\in L^{\infty}(\R)$ such that $|b(x)|\to 0$ as $|x|\to\infty$.
For  $$b(x):=20\sin(x)\e^{-x^2}, \quad x\in\R,$$ 
the numerically found eigenvalues and pseudospectra of the operator $T+S$ truncated to the $(2n-1)$-dimensional subspace in~\eqref{eq.nfourier} are shown in Figure~\ref{fig.CC}.
\begin{figure}[ht]
\begin{center}
 \includegraphics[width=0.47\textwidth]{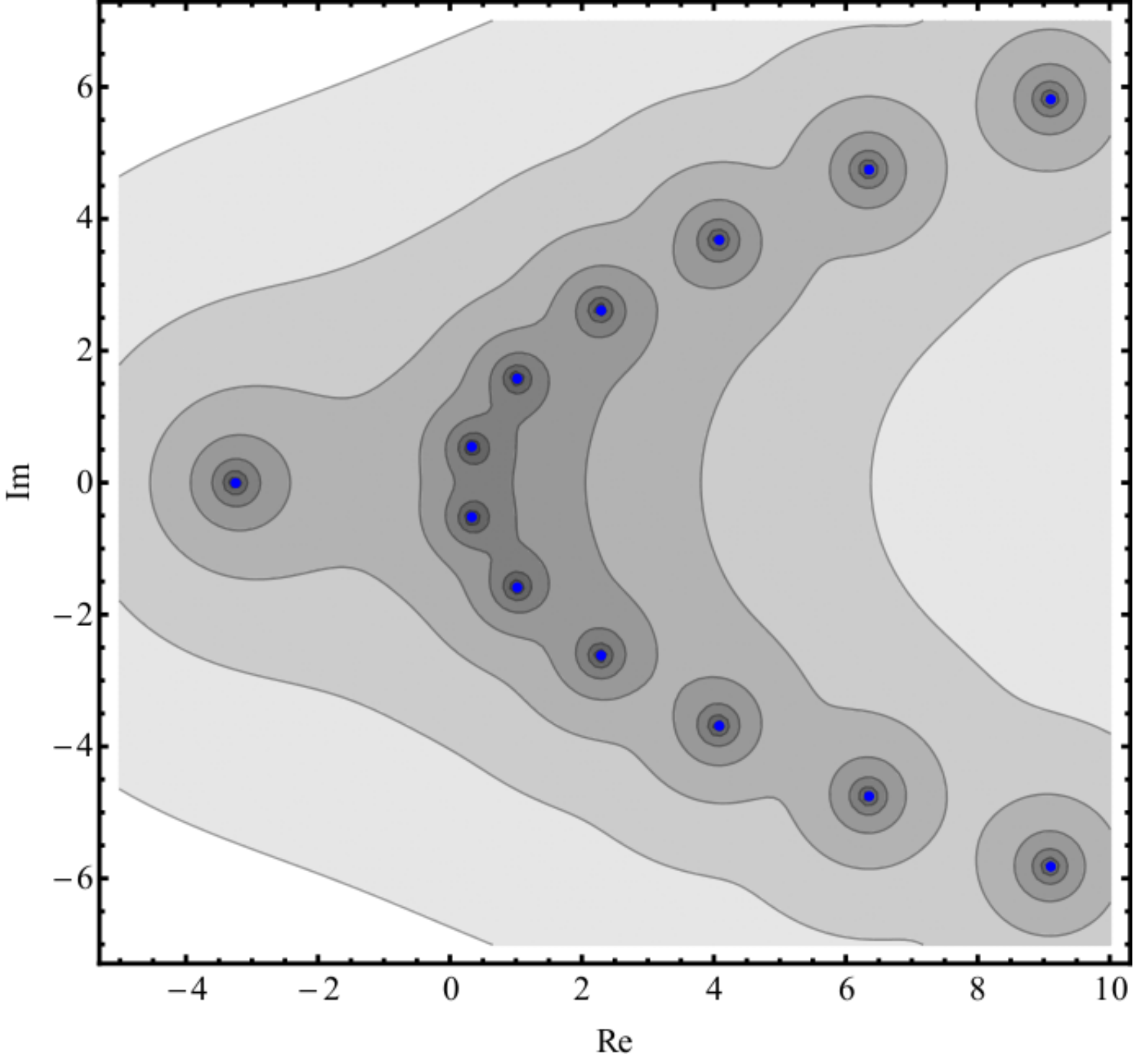}\hfill
 \includegraphics[width=0.47\textwidth]{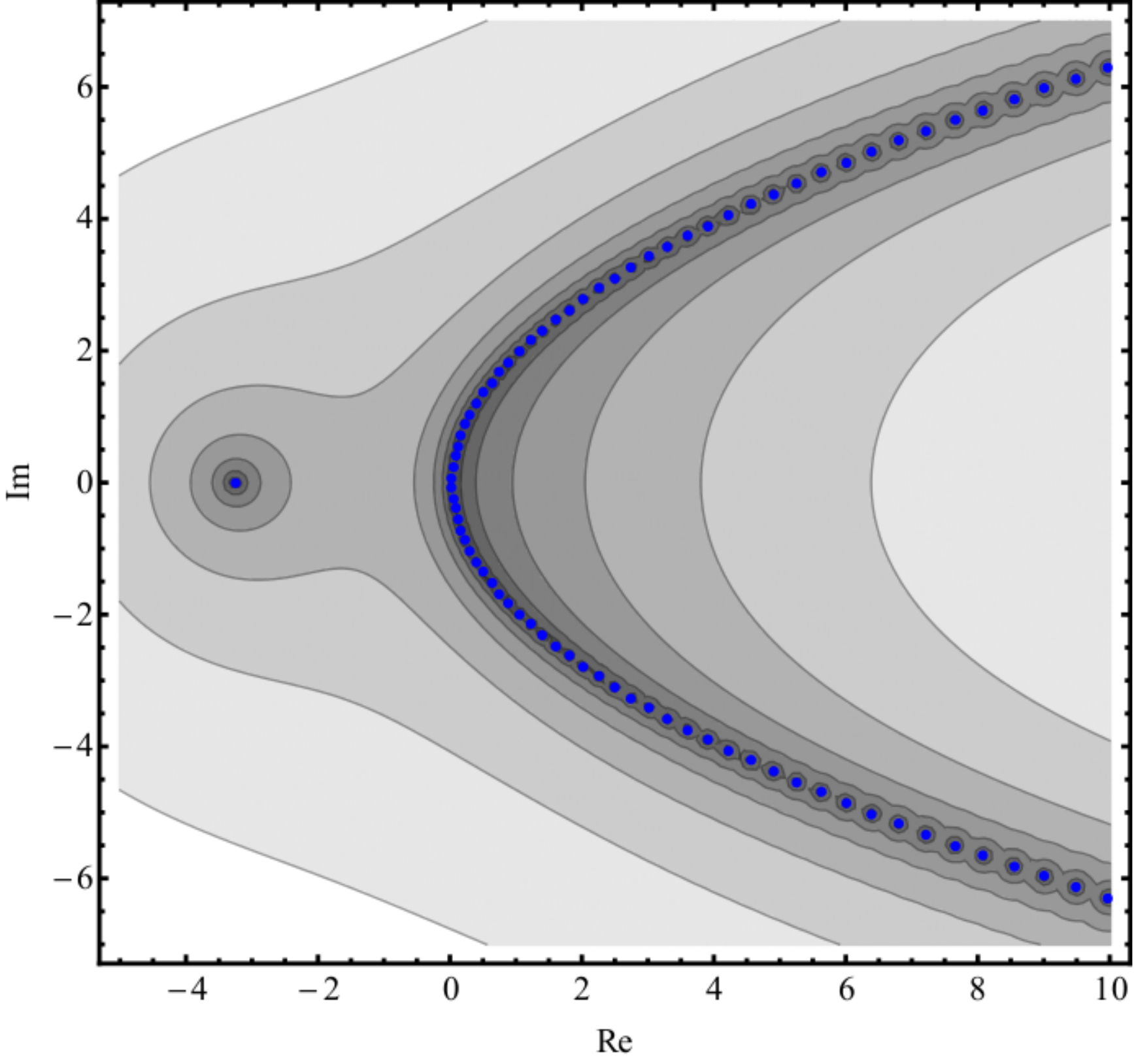}
\caption{\small Eigenvalues (blue dots) and $\eps$-pseudospectra for $\eps=2^3,2^2,\dots,2^{-3}$ in interval $[-5,10]+[-7,7]\,\I$ of approximation $A_{n;n}$ for $n=10$ (left) and $n=100$ (right).}
\label{fig.CC}
\end{center}
\end{figure}
The approximation is spectrally and $\eps$-pseudospectrally exact; the only discrete eigenvalue in the box $[-5,10]+[-7,7]\,\I $ is $\lm\approx -3.25$.
\end{example}

\subsection*{Acknowledgements}
The first part of the paper is based on results of the author's Ph.D.\ thesis~\cite{boegli-phd}. She would like to thank her doctoral advisor Christiane Tretter for the guidance.
The work was supported by the Swiss National Science Foundation (SNF), grant no.\ 200020\_146477 and Early Postdoc.Mobility project P2BEP2\_159007.

{\footnotesize
\bibliographystyle{acm}
\bibliography{mybib}
}

\end{document}